\newtheorem{theorem}{Theorem}[section]
\newtheorem{lemma}[theorem]{Lemma}
\newtheorem{corollary}[theorem]{Corollary}
\newtheorem{proposition}[theorem]{Proposition}
\theoremstyle{remark}
\newtheorem{remark}[theorem]{Remark}
\newtheorem{example}[theorem]{Example}
\DeclareSymbolFont{cyrletters}{OT2}{wncyr}{m}{n}
\DeclareMathSymbol{\Sha}{\mathalpha}{cyrletters}{"58}
\author[Bickel]{Kelly Bickel$^\dagger$}
\address{Department of Mathematics, Bucknell University, 380 Olin Science Building, Lewisburg, PA 17837, USA.}
\email{kelly.bickel@bucknell.edu}
\thanks{$\dagger$ Research supported in part by National Science Foundation
DMS grant \#1448846.}
\author[Gorkin]{Pamela Gorkin$^\ddagger$}
\address{Department of Mathematics, Bucknell University, 380 Olin Science Building, Lewisburg, PA 17837, USA.}
\email{pgorkin@bucknell.edu}
\thanks{$\ddagger$ Research supported in part by Simons Foundation Grant 243653.}
\begin{document}
\raggedbottom
\title{Compressions of the shift on the bidisk and their numerical ranges}
\date{\today}

\keywords{compressions of the shift, numerical range, inner function, bidisk}
\subjclass[2010]{Primary 47A12; Secondary 47A13, 30C15}

\maketitle
\begin{abstract}
We consider two-variable model spaces associated to rational inner functions  on the bidisk, which always possess canonical $z_2$-invariant subspaces $\mathcal{S}_2.$ A particularly interesting compression of the shift is the compression of multiplication by $z_1$ to $\mathcal{S}_2$, namely $ S^1_{\Theta}:= P_{\mathcal{S}_2} M_{z_1} |_{\mathcal{S}_2}$. We show that these compressed shifts are unitarily equivalent to matrix-valued Toeplitz operators with well-behaved symbols and characterize their numerical ranges and radii. We later specialize to particularly simple rational inner functions and study the geometry of the associated numerical ranges, find formulas for the boundaries, answer the zero inclusion question, and determine whether the numerical ranges are ever circular.

\end{abstract}
\section{Introduction}

\subsection{One-variable setting.} Let $\theta$ be an inner function on the Hardy space $H^2(\mathbb{D})$ and let $\mathcal{K}_{\theta} := H^2(\mathbb{D}) \ominus \theta H^2(\mathbb{D})$ be its model space. The associated compressions of the shift $S_\theta := P_{\theta} M_z|_{\mathcal{K}_{\theta}}$ (multiplication by $z$ followed by the orthogonal projection onto $\mathcal{K}_{\theta}$) have played a pivotal role in both operator and function theory. Indeed, allowing $\theta$ to be operator valued, the famous Sz.-Nagy--Foias model theory says: every completely nonunitary, $C_0$ contraction is unitarily equivalent to a compression of the shift $S_\theta$ on a model space $\mathcal{K}_{\theta}$ \cite{Snf10}. 

If the inner function is a finite Blaschke product $B$, i.e.
\[ B(z) = \prod_{i=1}^m \frac{ z-\alpha_i}{1-\bar{\alpha}_iz}, \qquad \text{ where } \alpha_1, \dots, \alpha_m \in \mathbb{D},\]
then the associated compression of the shift $S_B$ is quite well behaved. Indeed, the matrix of $S_{B}$ with respect to a basis called the Takenaka-Malmquist-Walsh basis $\{f_1, \dots, f_m\}$, see \cite{gr15}, is the upper triangular matrix $M_B$ given entry-wise by
  \begin{equation} \label{eqn:onevar}
  (M_{B})_{ij}:= \left \langle S_B f_j, f_i\right \rangle_{\mathcal{K}_{\theta}}  = \left \{ \begin{array}{cc}
  \alpha_i & \text{ if } i=j; \\
   \prod_{k=i+1}^{j-1} (-\overline{\alpha_k})(1-|\alpha_i|^2)^{1/2} (1-|\alpha_j|^2)^{1/2}  &\text{ if } i <j; \\
 0 & \text{ if } i >j.
  \end{array}
  \right.
  \end{equation}
For this matrix, see the survey \cite[pp.~180]{gw03}. Formula \eqref{eqn:onevar} allows one to answer many natural questions about the structure of $S_{\theta}.$ Answers concerning the numerical range and radius are particularly nice. Namely, if $T: \mathcal{H} \rightarrow \mathcal{H}$ is a bounded operator on a Hilbert space $\mathcal{H}$, then the \emph{numerical range} of $T$ is the set 
\[ \mathcal{W}(T) := \left \{ \langle T h, h \rangle_{\mathcal{H}} : \| h\|_{\mathcal{H}} = 1 \right \}\]
and the \emph{numerical radius} of $T$ is the number 
\[ w(T) := \sup \left\{ |\lambda | : \lambda \in \mathcal{W}(T) \right\}.\]
Discussion of these sets for compressed shifts associated to finite Blaschke products requires some geometry. Recall that Poncelet's closure theorem says:~given two ellipses with one contained in the other, if there is an $N$-sided polygon circumscribing the smaller ellipse that has all of its vertices on the larger ellipse, then for every $\lambda$ on the larger ellipse there is such an $N$-sided circumscribing polygon with a vertex at $\lambda$, see \cite[Section $5$]{gw03}.  Similarly, for $N \ge 3$, we say a curve $\Gamma \subset \mathbb{D}$ satisfies the \emph{$N$-Poncelet property} if for 
 each point $\lambda \in \partial \mathbb{D}=\mathbb{T},$ there is an $N$-sided polygon circumscribing $\Gamma$ with one vertex at $\lambda$ and all other vertices on $\mathbb{T}$, see \cite[pp. 182]{gw03}.
 
 Surprisingly, Poncelet curves have close ties to numerical ranges. Indeed, let $B$ be a finite Blaschke product of degree $m>1$. Then, as shown by Mirman \cite{m98} and Gau and Wu \cite{gw98},  the boundary $\partial \mathcal{W}(S_B)$ actually possesses the $(m+1)$-Poncelet property. The idea behind the proof is quite intuitive; the inscribing polygons are in one-to-one correspondence with the unitary $1$-dilations of $S_B$, which are obtained from \eqref{eqn:onevar}. Moreover the vertices of the polygons are exactly the eigenvalues of the  unitary $1$-dilations, and because $\partial \mathcal{W}(S_B)$ is strictly contained in $\mathbb{D}$, the numerical radius $w(S_B)$  is always strictly less than $1$. For a detailed exploration of Poncelet ellipses for $B$ a degree-$3$ Blaschke product, see \cite{DGM}, and for similar results concerning infinite Blaschke products, see \cite{cgp09}. 
 
 In what follows, we study these and other geometric properties of numerical ranges and radii of compressions of shifts on the bidisk $\mathbb{D}^2$. 

\subsection{Two-variable setting} For the two-variable case, let $\Theta$ be an inner function on $\mathbb{D}^2$, namely a function holomorphic on $\mathbb{D}^2$ whose boundary values satisfy $|\Theta(\tau)|=1$ for almost every $\tau \in \mathbb{T}^2$. Then let $\mathcal{K}_{\Theta}$ be the associated two-variable model space defined by
\[ 
\mathcal{K}_{\Theta} := H^2(\mathbb{D}^2) \ominus \Theta H^2(\mathbb{D}^2)= \mathcal{H}\left( \frac{1 - \Theta(z) \overline{\Theta(w)}}{(1-z_1\bar{w}_1)(1-z_2\bar{w}_2)} \right),\]
where $\mathcal{H}(K)$ denotes the reproducing kernel Hilbert space with reproducing kernel $K$. In this paper, we use $\Theta$ to denote two-variable inner functions and $\theta$ for simpler, often one-variable inner functions. In this setting, one natural compression of the shift is the operator
\[  \widetilde{S}^1_{\Theta} := P_{\Theta} M_{z_1}|_{ \mathcal{K}_{\Theta}},\] 
where $P_{\Theta}$ denotes the orthogonal projection of $H^2(\mathbb{D}^2)$ onto $\mathcal{K}_{\Theta}$ and $M_{z_1}$ is multiplication by $z_1$.  Although we explicitly study $ \widetilde{S}^1_{\Theta}$, symmetric results will hold for a similarly-defined $\widetilde{S}_{\Theta}^2.$ 

As in the one-variable discussion, we restrict attention to $\Theta$ that are both rational and inner. Section \ref{sec:rational} includes most needed details about rational inner functions, but discussing our main results will require some notation. First, the degree of $\Theta$, denoted $\deg \Theta = (m,n),$ is defined as follows:~write $\Theta=\frac{q}{p}$ with $p$ and $q$ polynomials with no common factors. Then $m$ is the highest degree of $z_1$ and $n$ the highest degree of $z_2$ appearing in either $p$ or $q$.  Moreover, if  $\Theta$ is rational inner with $\deg \Theta = (m,n)$, then there is an (almost) unique polynomial $p$ with no zeros on $\mathbb{D}^2$ such that 
$\Theta = \frac{\tilde{p}}{p},$
where $\tilde{p}(z) := z_1^m z_2^n \overline{p( \frac{1}{\bar{z}_1}, \frac{1}{\bar{z}_2})}$ and $p$ and $\tilde{p}$ share no common factors. See \cite{ams06, Rud69} for details.

Our goal is to study the numerical range of a general compression of the shift $ \widetilde{S}^1_{\Theta}$ associated to a rational inner function $\Theta$. Unfortunately, the question
\begin{center} ``What are the properties of $\mathcal{W}( \widetilde{S}^1_{\Theta})$?'' \end{center}
often has a trivial answer. To observe the problem, one can decompose $\mathcal{K}_{\Theta}$ as  
\begin{equation} \label{eqn:sub} \mathcal{K}_{\Theta} = \mathcal{S}_1 \oplus \mathcal{S}_2,\end{equation}
where $\mathcal{S}_1$ and $\mathcal{S}_2$ are respectively $M_{z_1}$- and $M_{z_2}$-invariant. There are canonical ways to obtain such decompositions, and details are provided in 
Section \ref{sec:rational}. If $\mathcal{S}_1$ is nontrivial, then $ \widetilde{S}^1_{\Theta}|_{\mathcal{S}_1} = M_{z_1}|_{\mathcal{S}_1}$ and one can further show that
\[ \text{Clos} \left \{ \left \langle  \widetilde{S}^1_{\Theta} f, f \right \rangle_{\mathcal{K}_{\Theta}} : \| f\|_{\mathcal{K}_{\Theta}} = 1, f \in \mathcal{S}_1 \right \} = \overline{\mathbb{D}}.\]
Then since $ \widetilde{S}^1_{\Theta}$ is a contraction, we can conclude that $\text{Clos}( \mathcal{W}( \widetilde{S}^1_{\Theta}))$ equals $\overline{\mathbb{D}};$ see Lemma \ref{lem:range} for details. Because of this, we compress $ \widetilde{S}^1_{\Theta}$ to the $M_{z_2}$-invariant subspace $\mathcal{S}_2$ from \eqref{eqn:sub} and study this more interesting compression of the shift:
\begin{equation} \label{eqn:compression}  S^1_{\Theta} : = P_{\mathcal{S}_2}  \widetilde{S}^1_{\Theta} |_{\mathcal{S}_2} = P_{\mathcal{S}_2} M_{z_1} |_{\mathcal{S}_2}.\end{equation}

\subsection{Outline and Main Results}
This paper studies the structure of the compression of the shift  $ S^1_{\Theta}$ defined in \eqref{eqn:compression} and the geometry of its numerical range. It is outlined as follows: in Section \ref{sec:rational}, we detail needed results about rational inner functions and their model spaces on the bidisk. In Section \ref{sec:structure}, we obtain most of our structural results about  $ S^1_{\Theta}$ and its numerical range, while in Section \ref{sec:examples}, we illustrate the results from Section \ref{sec:structure} with examples. In Sections \ref{Zero} and \ref{sec:boundary}, we study the geometry of  the numerical ranges $\mathcal{W}( S^1_{\Theta})$ associated to simple rational inner functions; Section \ref{Zero} addresses the zero inclusion question, and Section \ref{sec:boundary} examines the shape of the boundary of the numerical range.

Before stating our main results, we require the following notation: $H^2_2(\mathbb{D})$ denotes the one-variable Hardy space with independent variable $z_2$ and $H^2_2(\mathbb{D})^m : = \bigoplus_{i=1}^m H^2_2(\mathbb{D})$ denotes the space of vector-valued functions $\vec{f} = (f_1, \dots, f_m)$ with each $f_i \in H^2_2(\mathbb{D})$. Define $L_2^2(\mathbb{T})^m$ analogously, and let $F$ be a bounded $m\times m$ matrix-valued function defined for almost every $ z_2 \in \mathbb{T}$. Then the \emph{$z_2$-matrix-valued Toeplitz operator with symbol F} is the operator
\begin{equation} \label{eqn:Toeplitz}
T_F : H^2_2(\mathbb{D})^m \rightarrow H^2_2(\mathbb{D})^m  \ \ \text{ defined by }  \ \  T_F \vec{f}  = P_{H_2^2(\mathbb{D})^m} \big( F  \vec{f}  \ \big),
\end{equation}
where $P_{H_2^2(\mathbb{D})^m} $ is the orthogonal projection of $L_2^2(\mathbb{T})^m$ onto $H_2^2(\mathbb{D})^m$.

Then, in Section \ref{sec:structure}, we show that each $ S^1_{\Theta}$ is unitarily equivalent to a $z_2$-matrix-valued Toeplitz operator with a well-behaved symbol as follows:

\medskip
\noindent
{\bf Theorem~\ref{thm:unitary}.} {\it Let $\Theta = \frac{\tilde{p}}{p}$ be rational inner of degree $(m,n)$ and let $\mathcal{S}_2$  be as in \eqref{eqn:sub}. Then there exists an $m\times m$  matrix-valued function $M_{\Theta}$, with entries that are rational functions of $\bar{z}_2$ and continuous on $\overline{\mathbb{D}},$ such that
\[ S^1_{\Theta} = \mathcal{U} \ T_{M_{\Theta}} \ \mathcal{U}^*, \]
where $\mathcal{U}: H^2_2(\mathbb{D})^m \rightarrow \mathcal{S}_{2}$ is a unitary operator defined in \eqref{eqn:unitary}.}
\medskip

One can view Theorem \ref{thm:unitary}  as a generalization of the formula \eqref{eqn:onevar} for the matrix of a compressed shift associated to a Blaschke product. As in the one-variable setting,  this structural result gives information about the numerical range of $ S^1_{\Theta},$ namely:

\medskip
\noindent
{\bf Corollary~\ref{thm:nr}.} {\it Let $\Theta = \frac{\tilde{p}}{p}$ be rational inner of degree $(m,n)$, let $\mathcal{S}_2$  be as in \eqref{eqn:sub}, and let $M_{\Theta}$ be as in Theorem \ref{thm:unitary}. Then
\[ \text{Clos}\left(\mathcal{W}\left(S^1_{\Theta}  \right)\right) =  \text{Conv} \Big(  \bigcup_{\tau \in \mathbb{T}} \mathcal{W}\left(M_{\Theta}(\tau) \right)  \Big).\]}

Here ``Clos'' denotes the closure and ``Conv'' denotes the convex hull of the given sets.  Then Corollary \ref{thm:nr} says that $\mathcal{W}( S^1_{\Theta} )$ is built out of numerical ranges of specific $m \times m$ matrices. We also connect $\mathcal{W}( S^1_{\Theta} )$ to the numerical ranges of compressed shifts associated to degree-$m$ Blaschke products, see Theorem \ref{thm:generalnr}. This result is particularly important because it links the rich one-variable theory to this two-variable setting. For example, it implies that  Clos($\mathcal{W}( S^1_{\Theta} )$) is the closed convex hull of a union of sets whose boundaries satisfy the $(m+1)$-Poncelet property.  Amongst other results, we also combine Theorem \ref{thm:generalnr} with one-variable facts to characterize when the numerical radius is maximal:

\medskip
\noindent
{\bf Theorem~\ref{thm:numericalradius}.} {\it Let $\Theta = \frac{\tilde{p}}{p}$ be rational inner of degree $(m,n)$ and let $\mathcal{S}_2$  be as in \eqref{eqn:sub}. Then the numerical radius $w\big( S^1_{\Theta}  \big) =1$ if and only if $\Theta$ has a singularity on $\mathbb{T}^2$.}
\medskip

This theorem shows that certain one-variable properties do not (in general) hold in this two-variable setting. Indeed, as $N$-Poncelet sets cannot touch $\mathbb{T}$, this implies that  if $\Theta$ has a singularity on $\mathbb{T}^2$, then the boundary of $\mathcal{W}( S^1_{\Theta} )$ does not satisfy an $N$-Poncelet property.

In Section \ref{sec:examples}, we illustrate these theorems with examples. We consider  $\Theta := \prod_{i=1}^m \theta_i,$ where each $\theta_i$ is a degree  $(1,1)$ rational inner function  with a singularity on $\mathbb{T}^2.$ Specifically, we decompose the associated $\mathcal{K}_{\Theta}$ into concrete $M_{z_1}$- and $M_{z_2}$-invariant subspaces $\mathcal{S}_1$ and $\mathcal{S}_2$, find an orthonormal basis of $\mathcal{H}(K_2) := \mathcal{S}_2 \ominus M_{z_2} \mathcal{S}_2$, and use that to compute explicitly the matrix-valued function $M_{\Theta}$ from Theorem \ref{thm:unitary}. Proposition \ref{prop:productform} contains the decomposition of $\mathcal{K}_{\Theta}$ and the orthonormal basis of $\mathcal{H}(K_2)$, while Theorem \ref{thm:nr1} contains the formula for $M_{\Theta}.$

In Section~\ref{Zero}, we restrict attention to  $\Theta = \theta_1 \theta_2,$ where each $\theta_i$ is a degree  $(1,1)$ rational inner function  with a singularity on $\mathbb{T}^2.$  For these $\Theta,$ Theorem \ref{thm:nr1} gives a formula  for $M_{\Theta}$, which shows that $\mathcal{W}( S^1_{\Theta})$ is basically the convex hull of an infinite union of ellipses with specific foci and axes. This information allows us to study the geometry of these numerical ranges and in particular, investigate the classical problem:

\begin{center} ``When is zero in the numerical range $\mathcal{W}( S^1_{\Theta})$?''\end{center} 

An answer to the zero inclusion question often yields useful information. For example, the numerical range of a compact operator $T$ is closed if and only if $0 \in \mathcal{W}(T)$, \cite{BGS}. Bourdon and Shapiro \cite{BS} studied the zero inclusion question for composition operators showing, among other things, that the numerical range of a composition operator other than the identity always contains zero in the closure of the numerical range. More recently, Higdon \cite{HIGDON} showed that if $\varphi$ is a holomorphic self-map of $\mathbb{D}$ with Denjoy-Wolff point on the unit circle that is not a linear fractional transformation, then zero is an interior point of the numerical range of the composition operator $\mathcal{C}_\varphi$. 

In our setting, we obtain several results related to the zero inclusion question for $\mathcal{W}( S^1_{\Theta} )$. First, in Proposition~\ref{lem:general}, we obtain two conditions guaranteeing that zero is in this numerical range; these conditions involve the foci of the elliptical disks comprising $\mathcal{W}( S^1_{\Theta} )$.  We then impose additional restrictions on the coefficients of the rational inner function. Under these restrictions, in Proposition ~\ref{lem:positive_eigenvalues}, we obtain necessarily and sufficient conditions for both zero to be in the interior and zero to be in the boundary of the numerical range.

In Section \ref{sec:boundary}, we further study the shape of the numerical range $\mathcal{W}( S^1_{\Theta} )$. Due to the complexity of the computations, we only consider rational inner functions of the form  $\Theta = \theta^2_1$, where $\theta_1 = \frac{\tilde{p}}{p}$ for a polynomial $p(z)=a -z_1 + c z_2$ with no zeros on $\mathbb{D}^2$, a zero on $\mathbb{T}^2$, and $a, c >0$.   We initially consider the question: 
\begin{center} ``When is the numerical range $\mathcal{W}( S^1_{\Theta})$ circular?''\end{center}

For more general operators, this question has a long and interesting history. For example, Anderson showed that if an $m \times m$ matrix $M$ has the property that $\mathcal{W}(A)$ is contained in $\overline{\mathbb{D}}$ and there are more than $m$ points with modulus $1$ in the numerical range, then $\mathcal{W}(A) = \mathbb{D}$ and zero is an eigenvalue of $A$ of multiplicity at least $2$. In \cite{Wu}, Wu extends these results. 

We show that for our restricted class of rational inner functions, which seem to be the ones most likely to produce a circular numerical range, $\mathcal{W}( S^1_{\Theta})$ is never circular.  We then interpret the union of circles comprising  $\mathcal{W}( S^1_{\Theta})$ as a family of curves. Using the theory of envelopes, we are able to obtain a precise description of the boundary of the numerical range. The exact parameterization is given in Theorem \ref{thm:boundary}.  We refer the reader to \cite{Wu} for more information and other references about this question.

\subsection*{Acknowledgements} The authors gratefully acknowledge Institut Mittag-Leffler, where this work was initiated.
 The authors would also like to thank Elias Wegert for sharing a simple method for computing the envelope of a family of curves. 
\section{Rational Inner Functions \& Model Spaces} \label{sec:rational}

Let $\Theta$ be a rational inner function on $\mathbb{D}^2$ with $\deg \Theta = (m,n)$. As mentioned earlier, there is a basically unique polynomial $p$ with no zeros on $\mathbb{D}^2$ such that $\Theta = \frac{\tilde{p}}{p}$, where $\tilde{p}(z) = z_1^m z_2^n \overline{p( \frac{1}{\bar{z}_1}, \frac{1}{\bar{z}_2})}$ and $\tilde{p}, p$ have no common factors.

An application of B\'ezout's Theorem implies that $p, \tilde{p}$ have at most $2mn$ common zeros, including intersection multiplicity and moreover, they will have exactly $2mn$ common zeros if $\deg p = \deg \tilde{p}.$  Moreover, one can easily check that $p$ and $\tilde{p}$ have the same zeros on $\mathbb{T}^2$. Then as common zeros of $p$ and $\tilde{p}$ on $\mathbb{T}^2$ have even intersection multiplicity, $p$ can vanish at no more than $mn$ points on $\mathbb{T}^2$. For further details and proofs of these comments, see \cite{k14}. Then, an application of Theorem $4.9.1$ in \cite{Rud69} implies that $p$ also has no zeros on $(\mathbb{D} \times \mathbb{T}) \cup (\mathbb{T} \times \mathbb{D})$. 

If $\Theta$ is an inner function (not necessarily rational), the structure of the model space $\mathcal{K}_{\Theta}$ is also quite interesting. As mentioned earlier, there are canonical ways to decompose every nontrivial $\mathcal{K}_{\Theta}$ into subspaces that are $M_{z_1}$- and $M_{z_2}$-invariant, or equivalently, $z_1$- and $z_2$-invariant, as in \eqref{eqn:sub}.  For example, as discussed in  \cite{bsv05,bk13}, if you set $\mathcal{S}_1^{max}$ to be the maximal subspace of $\mathcal{K}_{\Theta}$ invariant under $M_{z_1}$,  then $\mathcal{S}^{max}_1$ is clearly $z_1$-invariant and $\mathcal{S}_2^{min} := \mathcal{K}_{\Theta} \ominus \mathcal{S}^{max}_1$ is $z_2$-invariant. One can similarly define $\mathcal{S}_2^{max}$ and $\mathcal{S}_1^{min}.$

Given any such subspaces $\mathcal{S}_1$ and $\mathcal{S}_2$ with $\mathcal{K}_{\Theta} = \mathcal{S}_1 \oplus \mathcal{S}_2$ and each $\mathcal{S}_j$ $z_j$-invariant, it makes sense to define reproducing kernels  $K_1$, $K_2: \mathbb{D}^2 \times \mathbb{D}^2 \rightarrow \mathbb{C}$ by
\begin{equation} \label{eqn:kernel}
\mathcal{H}(K_1) = \mathcal{S}_1 \ominus z_1 \mathcal{S}_1  \ \ \text{ and } \ \
\mathcal{H}(K_2) = \mathcal{S}_2 \ominus z_2 \mathcal{S}_2.
\end{equation}
The resulting pair of kernels $(K_1, K_2)$ is called a pair of \emph{Agler kernels} of $\Theta$ because the kernels satisfy the equation
\begin{equation} \label{eqn:ad} 1-\Theta(z) \overline{\Theta(w)} = (1-z_1 \bar{w}_1)K_2(z,w) + (1-z_2 \overline{w}_2) K_1(z,w),\end{equation}
for all $z,w \in \mathbb{D}^2.$ Indeed, any positive semidefinite kernels $(K_1, K_2)$ satisfying \eqref{eqn:ad} are called \emph{Agler kernels of $\Theta$} and the equation \eqref{eqn:ad} is called an  
\emph{Agler decomposition of $\Theta.$} The existence of Agler decompositions was first proved by Agler in \cite{ag90}.
 
If $\Theta$ is rational inner, there are close connections between the properties of $\Theta$ and the structure of the Hilbert spaces $\mathcal{H}(K_1)$ and $\mathcal{H}(K_2).$ The following result appears in \cite{k11}  and follows by an examination of the degrees and singularities of the functions in \eqref{eqn:ad}:

\begin{theorem} \label{thm:dim} Let $\Theta = \frac{ \tilde{p}}{p}$ be a rational inner function of degree $(m,n)$ and let $K_1, K_2$ be Agler kernels of $\Theta$ as in \eqref{eqn:ad}. Then $\dim \mathcal{H}(K_1)$, $\dim \mathcal{H}(K_2)$ are both finite. Moreover, if $g$ is a function in $\mathcal{H}(K_1),$ then $g = \frac{r}{p}$ where $\deg r \le (m,n-1)$ and  if $f$ is a function in $\mathcal{H}(K_2)$, then $f = \frac{q}{p}$ where $\deg q \le (m-1,n).$
\end{theorem}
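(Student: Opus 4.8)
The plan is to reduce the entire statement to a single polynomial identity obtained by clearing denominators in the Agler decomposition \eqref{eqn:ad}, and then to read off the claimed structure from the bidegrees of the resulting numerator kernels. Substituting $\Theta = \tilde{p}/p$ into \eqref{eqn:ad} and multiplying through by $p(z)\overline{p(w)}$ (using $\Theta(z)\overline{\Theta(w)}\,p(z)\overline{p(w)} = \tilde{p}(z)\overline{\tilde{p}(w)}$) produces
\[ E(z,w) := p(z)\overline{p(w)} - \tilde{p}(z)\overline{\tilde{p}(w)} = (1-z_1\bar{w}_1)\,H_2(z,w) + (1-z_2\bar{w}_2)\,H_1(z,w), \]
where $H_1 := p(z)\overline{p(w)}\,K_1(z,w)$ and $H_2 := p(z)\overline{p(w)}\,K_2(z,w)$. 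The left side $E$ is a polynomial in $(z,\bar{w})$ of bidegree at most $(m,n)$ in $z$ and at most $(m,n)$ in $\bar{w}$. The whole theorem will follow once I show that $H_1$ is a polynomial of bidegree at most $(m,n-1)$ in $z$ and $H_2$ a polynomial of bidegree at most $(m-1,n)$ in $z$.

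Granting those bidegree bounds, the passage to the stated form of the functions and to finite-dimensionality is soft. By the Hermitian symmetry $H_1(z,w) = \overline{H_1(w,z)}$ the bound $(m,n-1)$ also holds in $\bar{w}$. Choosing an orthonormal basis $\{e_i\}$ of $\mathcal{H}(K_1)$ gives $K_1(z,w) = \sum_i e_i(z)\overline{e_i(w)}$ and hence $H_1(z,w) = \sum_i (pe_i)(z)\overline{(pe_i)(w)}$. Since the $pe_i$ are linearly independent, the span of $\{H_1(\cdot,w):w\in\mathbb{D}^2\}$ equals the span of $\{pe_i\}$; as each $H_1(\cdot,w)$ is a polynomial of bidegree at most $(m,n-1)$, so is each $pe_i$. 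Thus every $e_i = r_i/p$ with $\deg r_i\le (m,n-1)$, and any $g\in\mathcal{H}(K_1)$ has the form $g = r/p$ with $\deg r\le(m,n-1)$; finite-dimensionality is immediate because such numerators form a finite-dimensional space. The claims for $\mathcal{H}(K_2)$ follow identically from the bidegree bound $(m-1,n)$ on $H_2$.

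To obtain the bidegree bounds I would first argue that $H_1,H_2$ have no poles and so extend to polynomials in $(z,\bar{w})$, and then decouple the two terms of the identity by restricting to the varieties $\{z_1\bar{w}_1=1\}$ and $\{z_2\bar{w}_2=1\}$. Restricting to $\{z_1\bar{w}_1=1\}$ annihilates the $H_2$-term and leaves $E = (1-z_2\bar{w}_2)H_1$ there; since $\deg_{z_2}E\le n$ and the factor $1-z_2\bar{w}_2$ raises the $z_2$-degree by one, this forces $\deg_{z_2}H_1\le n-1$. Restricting to $\{z_2\bar{w}_2=1\}$ gives symmetrically $\deg_{z_1}H_2\le m-1$. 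Feeding these two bounds back into the full identity and comparing the remaining degrees, where now no top-degree terms can cancel, yields $\deg_{z_1}H_1\le m$ and $\deg_{z_2}H_2\le n$, which are exactly the desired bidegree bounds.

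The step I expect to be the main obstacle is precisely the examination of degrees and singularities the statement alludes to: justifying that $H_1,H_2$ are genuinely polynomial (no pole is created by the zeros of $p$ on $\mathbb{T}^2$) and that their leading coefficients do not collapse when passing to the varieties $\{z_j\bar{w}_j=1\}$, since for a generic Hermitian kernel such a restriction can lower the degree. Both degeneracies are ruled out by combining the positivity of the Agler kernels $K_1,K_2$ with the fine zero structure of $p$ recorded at the start of this section, namely that $p$ is zero-free on $\overline{\mathbb{D}^2}$ away from $\mathbb{T}^2$ and has only finitely many zeros on $\mathbb{T}^2$, each of even intersection multiplicity. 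Carrying out this singularity analysis, as in \cite{k11}, completes the proof.
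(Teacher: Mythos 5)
You should know at the outset that the paper does not prove this theorem: it imports it from Knese \cite{k11}, saying only that it ``follows by an examination of the degrees and singularities of the functions in \eqref{eqn:ad}.'' So the only question is whether your outline stands on its own, and it does not: its ``preliminary'' step conceals the entire theorem. You propose to ``first argue that $H_1, H_2$ have no poles and so extend to polynomials in $(z,\bar w)$.'' But a priori the Agler kernels $K_1, K_2$ are merely positive semidefinite sesqui-analytic kernels on $\mathbb{D}^2 \times \mathbb{D}^2$ satisfying \eqref{eqn:ad}; nothing yet says they are rational, and absence of poles does not imply polynomiality for a sesqui-analytic function (consider $e^{z_1 \bar w_1}$, which is pole-free and not a polynomial). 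Proving that $K_1, K_2$ have finite rank and are rational with denominator $p(z)\overline{p(w)}$ is precisely the assertion being proved, so disposing of it by ``singularity analysis, as in \cite{k11}'' is circular: you are citing the very result you set out to establish. The gap is not cosmetic, because your decoupling device cannot even be formulated without it: the varieties $\{z_j \bar w_j = 1\}$ are disjoint from $\mathbb{D}^2 \times \mathbb{D}^2$, so restricting the identity to them presupposes that both sides continue off the set where \eqref{eqn:ad} is actually known to hold.

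The rest of the outline is essentially sound and would finish the proof if polynomiality of $H_1, H_2$ (with some finite a priori degree) were established by other means, so it is worth recording that your two flagged degeneracies are indeed handled by positivity. Positivity lets you write $H_1(z,w) = \sum_i P_i(z)\overline{P_i(w)}$ with finitely many polynomials $P_i$, since the coefficient matrix of a positive polynomial kernel is positive semidefinite; if $q_i$ denotes the leading $z_2$-coefficient of $P_i$ and $d = \max_i \deg_{z_2} P_i$, then the $z_2^d\bar w_2^d$-coefficient of $H_1$, restricted to $z_1\bar w_1 = 1$ with $z_1 \in \mathbb{T}$ (so $\bar w_1 = \bar z_1$), equals $\sum_i |q_i(z_1)|^2$, which cannot vanish identically on $\mathbb{T}$ unless every $q_i \equiv 0$; this is exactly the non-collapse you need, and the feed-back bounds $\deg_{z_1} H_1 \le m$, $\deg_{z_2} H_2 \le n$ are then pure degree counting. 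One small repair to the soft part: with a possibly infinite orthonormal basis, the claim that the span of $\{H_1(\cdot,w)\}$ equals the span of $\{p e_i\}$ needs closed spans; it is cleaner to argue that each kernel section $K_1(\cdot,w)$ lies in the finite-dimensional space $\{r/p : \deg r \le (m,n-1)\}$, so the span of the sections is a finite-dimensional, hence closed, dense subspace of $\mathcal{H}(K_1)$, forcing $\mathcal{H}(K_1)$ itself into that space.
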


Define the following exceptional set
\begin{equation} \label{eqn:exceptional} E_{\Theta} := \Big\{ \tau \in \mathbb{T}: \exists \ \tau_1 \in \mathbb{T} \text{ such that } p(\tau_1, \tau)=0 \Big\}.  \end{equation}
By the above comments about $\Theta$, the set $E_{\Theta}$ is necessarily finite. For $\tau \in \mathbb{T}$, define the slice function $\Theta_{\tau}$ by $\Theta_{\tau} \equiv \Theta(\cdot, \tau).$ Then $\Theta_{\tau}$ is a finite Blaschke product and in what follows, $\mathcal{K}_{\Theta_{\tau}}$ will denote the one-variable model space associated to $\Theta_{\tau}.$

The following result is proved for Hilbert spaces arising from canonical decompositions of $\mathcal{K}_{\Theta}$ in \cite{bk13, w10}.  Specifically,  see Theorems 1.6-1.8 in \cite{bk13} as well as Proposition $2.5$ in \cite{w10}. Here, we include the proof for more general decompositions of $\mathcal{K}_{\Theta}$, which basically mirrors the ideas appearing in \cite{bk13}. 

\begin{theorem} \label{thm:rational} Let $\Theta = \frac{ \tilde{p}}{p}$ be a rational inner function of degree $(m,n)$ and let $K_1, K_2$ be defined as in \eqref{eqn:kernel}. Then for any $\tau \in \mathbb{T} \setminus E_{\Theta}$,  $\Theta_\tau$ is a Blaschke product with $\deg \Theta_ {\tau}=m$ and the restriction map $\mathcal{J}_{\tau}: \mathcal{H}(K_2) \rightarrow \mathcal{K}_{\Theta_{\tau}}$ defined by $\mathcal{J}_{\tau} f = f(\cdot, \tau)$  is unitary. Furthermore,  $\dim \mathcal{H}(K_2) =m.$  The analogous statements hold for $\mathcal{H}(K_1).$
 \end{theorem}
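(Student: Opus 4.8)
The plan is to prove everything for $\mathcal{H}(K_2)$; the statement for $\mathcal{H}(K_1)$ will follow by interchanging the roles of $z_1$ and $z_2$. First I would pin down the structure of the slice function. Fix $\tau \in \mathbb{T}\setminus E_{\Theta}$. Since $p$ has no zeros on $\mathbb{D}^2 \cup (\mathbb{D}\times\mathbb{T})$ and, by the definition of $E_{\Theta}$ in \eqref{eqn:exceptional}, none on the circle $\mathbb{T}\times\{\tau\}$, the one-variable polynomial $P := p(\cdot,\tau)$ has no zeros on $\overline{\mathbb{D}}$. Hence $\Theta_{\tau} = \tilde p(\cdot,\tau)/P$ is holomorphic on $\overline{\mathbb{D}}$, and since $\Theta$ is inner and $\Theta_\tau$ is continuous on $\mathbb{T}$, we get $|\Theta_\tau|\equiv 1$ there, so $\Theta_\tau$ is a finite Blaschke product. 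To compute its degree I would use the identity $\tilde p(z_1,\tau)=\tau^n P^{\ast}(z_1)$ with $P^{\ast}(z_1):=z_1^m\overline{P(1/\bar z_1)}$, valid for $\tau\in\mathbb{T}$. Because the zeros of $P$ lie outside $\overline{\mathbb{D}}$, those of $P^{\ast}$ lie in $\mathbb{D}$, the two polynomials share no zeros, and $P^{\ast}$ has $m$ zeros counted with multiplicity (its leading coefficient is $\overline{p(0,\tau)}\neq 0$). Therefore $\Theta_\tau$ is Blaschke of degree exactly $m$, so $\dim\mathcal{K}_{\Theta_\tau}=m$ and $\mathcal{K}_{\Theta_\tau}=\{s/P:\deg s\le m-1\}$ by the standard model-space description.

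The conceptual heart of the argument is a kernel identity. Setting $z_2=w_2=\tau$ in the Agler decomposition \eqref{eqn:ad} makes the factor $1-z_2\bar w_2$ vanish, leaving
\[ 1-\Theta_\tau(z_1)\overline{\Theta_\tau(w_1)} = (1-z_1\bar w_1)\,K_2\big((z_1,\tau),(w_1,\tau)\big), \]
so that $K_2\big((z_1,\tau),(w_1,\tau)\big)$ is precisely the reproducing kernel $k_{\Theta_\tau}$ of $\mathcal{K}_{\Theta_\tau}$. The restriction to the slice is legitimate because, for $\tau\notin E_{\Theta}$, every $f=q/p\in\mathcal{H}(K_2)$ extends continuously to $\overline{\mathbb{D}}\times\{\tau\}$. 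Aronszajn's theorem on restrictions of reproducing kernel Hilbert spaces then identifies $\{f(\cdot,\tau):f\in\mathcal{H}(K_2)\}$, with its quotient norm, as $\mathcal{K}_{\Theta_\tau}$; equivalently, $\mathcal{J}_\tau$ maps $\mathcal{H}(K_2)$ onto $\mathcal{K}_{\Theta_\tau}$ and is norm non-increasing. Surjectivity can also be seen directly, since $\mathcal{J}_\tau K_2(\cdot,(w_1,\tau))=k_{\Theta_\tau}(\cdot,w_1)$ and these kernel sections span the finite-dimensional space $\mathcal{K}_{\Theta_\tau}$.

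It remains to upgrade this coisometry to a unitary by showing $\mathcal{J}_\tau$ is isometric, and this is the step I expect to be the main obstacle, since a priori the slice norm $\|f(\cdot,\tau)\|_{H^2(\mathbb{D})}$ could depend on $\tau$. The key observation is that $\mathcal{H}(K_2)\subseteq\mathcal{S}_2\subseteq H^2(\mathbb{D}^2)$ carries the ambient norm, and that for $f\in\mathcal{H}(K_2)=\mathcal{S}_2\ominus z_2\mathcal{S}_2$ one has $z_2^k f\in z_2\mathcal{S}_2$ for every $k\ge 1$ by the $z_2$-invariance of $\mathcal{S}_2$, whence $\langle f,z_2^k f\rangle_{H^2(\mathbb{D}^2)}=0$ for all $k\ge 1$. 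I would then set $G(z_2):=\frac{1}{2\pi}\int_{\mathbb{T}}|f(z_1,z_2)|^2\,|dz_1|$ and note that $\langle f,z_2^k f\rangle$ is exactly the $k$-th Fourier coefficient of $G$. Since $G$ is real-valued and all of its nonzero Fourier coefficients vanish, $G$ is constant almost everywhere, equal to $\|f\|_{H^2(\mathbb{D}^2)}^2$.

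By continuity of the slice integral off $E_{\Theta}$, it follows that $\|f(\cdot,\tau)\|_{H^2(\mathbb{D})}^2=G(\tau)=\|f\|_{\mathcal{H}(K_2)}^2$, so $\mathcal{J}_\tau$ is isometric and, combined with the surjectivity above, unitary. Consequently $\dim\mathcal{H}(K_2)=\dim\mathcal{K}_{\Theta_\tau}=m$. The analogous slicing in the first variable, fixing $z_1=w_1=\sigma\in\mathbb{T}$ off the corresponding exceptional set so that the $1-z_1\bar w_1$ term drops out, yields that $K_1\big((\sigma,z_2),(\sigma,w_2)\big)$ is the reproducing kernel of the degree-$n$ Blaschke product $\Theta(\sigma,\cdot)$ and gives $\dim\mathcal{H}(K_1)=n$, completing the symmetric statement.
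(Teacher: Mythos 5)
Your proof is correct and follows essentially the same route as the paper's: you slice the Agler decomposition at $z_2=w_2=\tau$ to identify the restriction of $K_2$ with the reproducing kernel of $\mathcal{K}_{\Theta_\tau}$ (giving well-definedness and surjectivity of $\mathcal{J}_\tau$), and you prove isometry by showing the slice-norm function has vanishing nonzero Fourier coefficients, using $\mathcal{H}(K_2)\perp z_2\mathcal{S}_2$, together with continuity off $E_\Theta$. The only cosmetic differences are that you work with the diagonal quantity $\int_{\mathbb{T}}|f(z_1,z_2)|^2\,d\sigma(z_1)$ rather than the paper's polarized $F_{f,g}$, and you make the degree count for $\Theta_\tau$ explicit via the reflected polynomial $P^{\ast}$ where the paper only sketches it.
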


\begin{proof} By Theorem \ref{thm:dim}, $\dim \mathcal{H}(K_2)=M$ for some $M \in \mathbb{N}.$ We will later conclude that $M=m$.  Let $\{f_1, \dots, f_M\}$ be an orthonormal basis of $\mathcal{H}(K_2)$. Then by \cite[Proposition $2.18$]{am01}, we have $K_2(z,w) = \sum_{i=1}^M f_i(z) \overline{f_i(w)}.$

 Fix $\tau \in \mathbb{T} \setminus E_{\Theta}$. Then $\Theta_{\tau}$ is a one-variable rational inner function and thus, is a Blaschke product with $\deg \Theta_{\tau} \le m.$  Further, as $p$ has no zeros on $\mathbb{D}\times \mathbb{T}$,  one can show that $\deg \tilde{p}(\cdot, \tau) =m.$  Since $p(\cdot, \tau)$ also has no zeros on $\mathbb{T}$, no polynomials cancel in the fraction $\Theta_{\tau}=\frac{\tilde{p}(\cdot, \tau)}{p(\cdot, \tau)}$. This implies $\deg \Theta_{\tau} =m$ and $\dim \mathcal{K}_{\Theta_{\tau}} = m$. Now, letting $z_2, w_2 \rightarrow \tau$ in \eqref{eqn:ad} and dividing by $1-z_1\overline{w_1}$ gives
\[ \frac{1-\Theta_{\tau}(z_1) \overline{\Theta_{\tau}(w_1)}}{1-z_1 \overline{w_1}} = \sum_{i=1}^M f_i(z_1, \tau) \overline{f_i(w_1, \tau)}.\]
Thus, the set $\{ f_1(\cdot, \tau), \dots, f_M(\cdot, \tau)\}$ spans $\mathcal{K}_{\Theta_{\tau}}$ and so the restriction map $\mathcal{J}_{\tau}$ is well defined (i.e.~maps $\mathcal{H}(K_2)$ into $\mathcal{K}_{\Theta_{\tau}}$) and is surjective. 

To show that each $\mathcal{J}_{\tau}$ is an isometry, fix $f,g \in \mathcal{H}(K_2)$ and for $z_2 \in \mathbb{T}$, define
\[ F_{f,g}(z_2) :=  \int_{\mathbb{T}} f(z_1, z_2) \overline{g(z_1, z_2)} d\sigma(z_1) = \left \langle f(\cdot, z_2), g(\cdot, z_2)\right \rangle_{\mathcal{K}_{\Theta_{z_2}}},\]
where $d\sigma(z_1)$ is normalized Lebesgue measure on $\mathbb{T}$ and the last equality holds for $z_2 \in \mathbb{T} \setminus E_{\Theta}.$ An application of H\"older's inequality immediately implies that $F_{f,g} \in L^1(\mathbb{T})$. Furthermore, our assumptions imply that $\mathcal{H}(K_2) \perp z_2 \mathcal{H}(K_2).$ From this we can conclude that $f \perp z_2^j g$ in $\mathcal{S}_2$, and hence in $H^2(\mathbb{D}^2)$, for all $j \in \mathbb{Z} \setminus \{0\}$. Then the Fourier coefficients of $F_{f,g}$ can be computed as follows:
\[  \widehat{F_{f,g}}(j) =  \int_{\mathbb{T}} z_2^{-j} F_{f,g}(z_2) d\sigma(z_2) =\int_{\mathbb{T}^2} z_2^{-j} f(z) \overline{g(z)} d\sigma(z_1)d\sigma(z_2)=0\]
for $j \in \mathbb{Z} \setminus \{0\}$. Then basic Fourier analysis (for example, Corollary 8.45 in \cite{folland}) implies that 
\[ F_{f,g}(z_2)  =  \widehat{F_{f,g}}(0)  = \left \langle f,g \right \rangle_{\mathcal{H}(K_2)}\ \text{ for a.e.~} z_2 \in \mathbb{T}.\]
But, the formula for $F_{f,g}$ implies that it is continuous on $\mathbb{T}\setminus E_{\Theta}$ and so for $z_2 \in \mathbb{T}\setminus E_{\Theta}$,
\[  \left \langle f(\cdot, z_2), g(\cdot, z_2)\right \rangle_{\mathcal{K}_{\Theta_{z_2}}} = F_{f,g}(z_2)= \left \langle f,g \right \rangle_{\mathcal{H}(K_2)}.\]
This implies $\mathcal{J}_{\tau}$ is an isometry for $\tau \in \mathbb{T} \setminus E_{\Theta}$. Since it is also surjective, $\mathcal{J}_{\tau}$ is unitary and so
\[ \dim \mathcal{H}(K_2) = \dim \mathcal{K}_{\Theta_{\tau}}  = m,\]
completing the proof. \end{proof}

\begin{remark} \label{rem:kernel} Let $\Theta = \frac{\tilde{p}}{p}$ be rational inner with $\deg \Theta = (m,n)$ and let $\mathcal{S}_2$ be as in \eqref{eqn:sub}. Then Theorems \ref{thm:dim} and \ref{thm:rational} can be used to deduce information about both the functions in $\mathcal{S}_2$ and the inner product of $\mathcal{S}_2$. As mentioned earlier,  we let $H^2_2(\mathbb{D})$ denote the one-variable Hardy space with independent variable $z_2.$

First, as in \eqref{eqn:kernel}, let $K_2$ be the reproducing kernel satisfying $\mathcal{H}(K_2) = \mathcal{S}_2 \ominus z_2 \mathcal{S}_2.$ By Theorems \ref{thm:dim} and \ref{thm:rational}, there is an orthonormal basis $\{ \frac{q_1}{p}, \dots, \frac{q_m}{p}\} $ of $\mathcal{H}(K_2)$ with $\deg q_i \le (m-1,n)$ for $i=1,\dots, m.$  Then, since $\frac{q_i}{p} \perp \frac{q_j}{p} z^k$ for all $i \ne j$ and $k\in \mathbb{Z}$, one can show
\begin{equation} \label{eqn:S2decomp} \mathcal{S}_2 = \mathcal{H}\left( \frac{\sum_{i=1}^m \frac{q_i(z)}{p(z)} \frac{\overline{q_i(w)}}{\overline{p(w)}}}{1-z_2\bar{w}_2}\right) = \bigoplus_{i=1}^m \mathcal{H}\left( \frac{ \frac{q_i(z)}{p(z)} \frac{\overline{q_i(w)}}{\overline{p(w)}}}{1-z_2\bar{w}_2}\right),
\end{equation}
where the last term indicates an orthogonal decomposition of $\mathcal{S}_2$ into $m$ subspaces. We also claim that each subspace
\[ \mathcal{S}_2^i := \mathcal{H}\left( \frac{ \frac{q_i(z)}{p(z)} \frac{\overline{q_i(w)}}{\overline{p(w)}}}{1-z_2\bar{w}_2}\right)\]
 is precisely the set of functions $\frac{q_i}{p}H_2^2(\mathbb{D})$ and for each pair of functions $\frac{q_i}{p}f_i,  \frac{q_i}{p}g_i \in \mathcal{S}_2^i$, 
 \begin{equation} \label{eqn:inner} \left \langle \frac{q_i}{p} f_i, \frac{q_i}{p} g_i \right \rangle_{\mathcal{S}_2^i} = \left \langle f_i, g_i \right \rangle_{H_2^2(\mathbb{D})}.\end{equation}
 One can prove this claim by defining the above inner product on the set $\frac{q_i}{p}H_2^2(\mathbb{D})$. A straightforward computation shows that this turns  $\frac{q_i}{p}H_2^2(\mathbb{D})$ into a reproducing kernel Hilbert space with reproducing kernel $\frac{q_i(z)}{p(z)} \frac{\overline{q_i(w)}}{\overline{p_i(w)}} \frac{1}{1-z_2\bar{w}_2}$. By the uniqueness of reproducing kernels, the set $\frac{q_i}{p} H_2^2(\mathbb{D})$ with the proposed inner product is exactly $\mathcal{S}_2^i.$

Then, we can define a linear map $\mathcal{U}: H^2_2(\mathbb{D})^m \rightarrow \mathcal{S}_2$ by
\begin{equation} \label{eqn:unitary} \mathcal{U} \vec{f} := \sum_{i=1}^m \frac{q_i}{p} f_i,\qquad \text{ for } \vec{f} = (f_1, \dots,f_m )\in H^2_2(\mathbb{D})^m.\end{equation} 
We will show that this map is actually unitary. First, observe that this map is well defined and surjective since \eqref{eqn:S2decomp} and the above characterization of the subspaces $\mathcal{S}_2^i$ imply that $\mathcal{S}_2$ is composed precisely of functions of the form $\sum_{i=1}^m \frac{q_i}{p} f_i$, where each $f_i \in H_2^2(\mathbb{D})$. Moreover, as \eqref{eqn:S2decomp} is an orthogonal decomposition and \eqref{eqn:inner} gives the inner product on each $\mathcal{S}_2^i$, we can conclude that for all $\vec{f}, \vec{g} \in H^2_2(\mathbb{D})^m$, 
\[
  \left \langle \mathcal{U} \vec{f} , \mathcal{U} \vec{g } \right \rangle_{\mathcal{S}_2}  = \sum_{i=1}^m \left \langle \frac{q_i}{p}f_i, \frac{q_i}{p}g_i \right \rangle_{\mathcal{S}_2^i}
 = \sum_{i=1}^m \left \langle f_i, g_i \right \rangle_{H^2_2(\mathbb{D})} = \left \langle \vec{f}, \vec{g} \right \rangle_{H_2^2(\mathbb{D})^m}.\] 
Thus, $\mathcal{U}$ is unitary as desired.
\end{remark}

\section{The Structure and Numerical Range of $S^1_{\Theta}$  }\label{sec:structure}

Let $\Theta$ be rational inner and write $\mathcal{K}_{\Theta} = \mathcal{S}_1 \oplus \mathcal{S}_2,$ for subspaces $\mathcal{S}_1$, $\mathcal{S}_2$ that are respectively $z_1$- and $z_2$-invariant. As the following lemma shows, the numerical range of $P_{\mathcal{S}_1}  \widetilde{S}^1_{\Theta}|_{\mathcal{S}_1}$ is not particularly interesting.

\begin{lemma} \label{lem:range} Let $\Theta = \frac{\tilde{p}}{p}$ be rational inner of degree $(m,n)$ and let $\mathcal{S}_1$  be a $z_1$-invariant subspace of $\mathcal{K}_{\Theta}$ as in \eqref{eqn:sub}.
\begin{itemize}
\item[a.] If $n=0$, then Clos $\mathcal{W}(P_{\mathcal{S}_1}  \widetilde{S}^1_{\Theta}|_{\mathcal{S}_1})= \{0\}.$ 
\item[b.] If $n > 0$, then Clos $\mathcal{W}(P_{\mathcal{S}_1} \widetilde{S}^1_{\Theta}|_{\mathcal{S}_1})= \overline{\mathbb{D}}.$ 
\end{itemize}
\end{lemma}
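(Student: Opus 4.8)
The plan is to first strip the projection away and recognize the operator as a plain multiplication operator, then read off its numerical range from the Wold decomposition. Since $\mathcal{S}_1$ is $M_{z_1}$-invariant inside $\mathcal{K}_{\Theta}$, every $f \in \mathcal{S}_1$ satisfies $M_{z_1} f \in \mathcal{S}_1 \subseteq \mathcal{K}_{\Theta}$, so $\widetilde{S}^1_{\Theta} f = P_{\Theta} M_{z_1} f = M_{z_1} f$ and then $P_{\mathcal{S}_1} \widetilde{S}^1_{\Theta} f = M_{z_1} f$. Hence $P_{\mathcal{S}_1} \widetilde{S}^1_{\Theta}|_{\mathcal{S}_1} = V := M_{z_1}|_{\mathcal{S}_1}$. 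As the restriction of the isometry $M_{z_1}$ on $H^2(\mathbb{D}^2)$ to an invariant subspace, $V$ is itself an isometry, so $\mathcal{W}(V) \subseteq \overline{\mathbb{D}}$ is automatic and only the reverse containment (or its collapse to $\{0\}$) needs work. The crucial structural point is that the wandering subspace of $V$ is precisely $\mathcal{H}(K_1)$, since $\mathcal{S}_1 \ominus V\mathcal{S}_1 = \mathcal{S}_1 \ominus z_1 \mathcal{S}_1 = \mathcal{H}(K_1)$ by \eqref{eqn:kernel}.

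For part (a), when $n = 0$ the degree bound $\deg r \le (m, n-1)$ in Theorem \ref{thm:dim} cannot be met, so $\mathcal{H}(K_1) = \{0\}$, i.e.\ $\mathcal{S}_1 = z_1 \mathcal{S}_1$. Iterating gives $\mathcal{S}_1 = z_1^k \mathcal{S}_1 \subseteq z_1^k H^2(\mathbb{D}^2)$ for all $k$, and since $M_{z_1}$ is a pure isometry on $H^2(\mathbb{D}^2)$, so that $\bigcap_{k\ge 0} z_1^k H^2(\mathbb{D}^2) = \{0\}$, I would conclude $\mathcal{S}_1 = \{0\}$ and the numerical range collapses to $\{0\}$.

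For part (b), when $n > 0$ the analog of Theorem \ref{thm:rational} gives $\dim \mathcal{H}(K_1) = n > 0$, so $V$ is a pure isometry with nonzero wandering subspace, i.e.\ a unilateral shift. Rather than merely cite that a shift has numerical range $\mathbb{D}$, I would produce the values explicitly: fix a unit vector $g \in \mathcal{H}(K_1)$; because $g \perp z_1 \mathcal{S}_1$ and $M_{z_1}$ is isometric, the family $\{z_1^k g\}_{k \ge 0} \subseteq \mathcal{S}_1$ is orthonormal. For $r \in [0,1)$ and $\alpha \in \mathbb{R}$ the unit vector $f_{r,\alpha} := \sqrt{1-r^2}\sum_{k\ge 0} r^k e^{-ik\alpha} z_1^k g$ then satisfies $\langle V f_{r,\alpha}, f_{r,\alpha}\rangle = r e^{i\alpha}$ by a geometric-series computation. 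As $r$ and $\alpha$ vary these values sweep out all of $\mathbb{D}$, giving $\mathbb{D} \subseteq \mathcal{W}(V) \subseteq \overline{\mathbb{D}}$ and hence $\text{Clos}\,\mathcal{W}(V) = \overline{\mathbb{D}}$.

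The only genuinely substantive step is part (b). The main things to get right are the orthonormality of $\{z_1^k g\}$ — which is exactly the statement that $\mathcal{H}(K_1) = \mathcal{S}_1 \ominus z_1 \mathcal{S}_1$ is a true wandering subspace for the isometry $V$ — together with the convergence and normalization of $f_{r,\alpha}$; once these are settled the value $r e^{i\alpha}$ drops out of a standard summation. In part (a) the one subtle ingredient is invoking purity of $M_{z_1}$ on $H^2(\mathbb{D}^2)$ to upgrade $\mathcal{H}(K_1) = \{0\}$ to $\mathcal{S}_1 = \{0\}$.
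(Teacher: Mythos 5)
Your proof is correct, and it diverges from the paper's in an interesting way in part (b). Both arguments start identically: the compression is just the isometry $V = M_{z_1}|_{\mathcal{S}_1}$, and part (a) in both cases comes from the degree bounds (your Theorem \ref{thm:dim}, the paper's Theorem \ref{thm:rational}) forcing $\mathcal{H}(K_1)=\{0\}$, upgraded to $\mathcal{S}_1=\{0\}$ by purity of $M_{z_1}$ on $H^2(\mathbb{D}^2)$ --- the paper phrases this via the Wold-type decomposition $\mathcal{S}_1 = \bigoplus_{k\ge 0} z_1^k \mathcal{H}(K_1)$, you via $\mathcal{S}_1 \subseteq \bigcap_k z_1^k H^2(\mathbb{D}^2) = \{0\}$, which is the same fact. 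For part (b), however, the paper argues function-theoretically: it takes a polynomial orthonormal basis $\{r_i/p\}$ of $\mathcal{H}(K_1)$, invokes Hilbert's Nullstellensatz to show that the set $Z_{K_1}$ of points $w_1$ at which every $r_i(w_1,\cdot)$ vanishes is finite, and then for $w=(w_1,w_2)$ with $w_1\notin Z_{K_1}$ uses the normalized reproducing kernel $\widehat{K}_1(\cdot,w)$ of $\mathcal{S}_1$ to realize $w_1$ itself as a numerical-range value, finishing by density of $\mathbb{D}\setminus Z_{K_1}$. You instead exploit the operator-theoretic structure: $\mathcal{H}(K_1)$ is a wandering subspace for $V$, so one unit vector $g$ produces the orthonormal family $\{z_1^k g\}$ and the vectors $f_{r,\alpha}$, whose values $re^{i\alpha}$ sweep out all of $\mathbb{D}$ --- the classical fact that a non-unitary isometry (unilateral shift) has numerical range containing the open disk. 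Your route needs only $\dim \mathcal{H}(K_1)\ge 1$ and avoids both the Nullstellensatz and the exceptional set; the paper's route buys extra concrete information (off a finite set, each point $w_1\in\mathbb{D}$ is attained exactly, at a normalized kernel function tied to a point of the bidisk), which matches the reproducing-kernel techniques used throughout the rest of the paper. Both are complete; your justification of the orthonormality of $\{z_1^k g\}$ via the wandering property, and the normalization and summation giving $\langle V f_{r,\alpha}, f_{r,\alpha}\rangle = re^{i\alpha}$, are exactly the points that needed care, and they are handled correctly.
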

\begin{proof} Let $K_1$ be as in \eqref{eqn:kernel}, i.e.~the reproducing kernel satisfying $\mathcal{H}(K_1) = \mathcal{S}_1 \ominus z_1 \mathcal{S}_1.$ Then 
\[ \mathcal{S}_1 = \bigoplus_{k=0}^{\infty} z_1^k \mathcal{H}(K_1).\]
 If $n=0$, then Theorem \ref{thm:rational} implies that $\dim \mathcal{H}(K_1) =0$, so $\mathcal{S}_1 = \{0\}.$ It follows immediately that  Clos $\mathcal{W}(P_{\mathcal{S}_1}  \widetilde{S}^1_{\Theta}|_{\mathcal{S}_1})= \{0\}.$ 

Now assume $n>0$. Then by Theorems~\ref{thm:dim} and \ref{thm:rational}, we can find an orthonormal basis $\{ \frac{r_1}{p}, \dots, \frac{r_n}{p}\}$ of $\mathcal{H}(K_1)$ with each $r_i$ a polynomial. Define
\[ Z_{K_1} := \{ w_1 \in \mathbb{D} : r_i(w_1, w_2) =0 \ \text{ for all } w_2 \in \mathbb{D} \text{ and } 1 \le i \le n\}.\]
If $w_1 \in Z_{K_1}$, then each $r_i(w_1, \cdot) \equiv 0$ on $\mathbb{D}$. Thus, $ r_i(w_1, \cdot) \equiv 0$ on $\mathbb{C}$. This implies $r_i$ vanishes on the zero set of $z_1-w_1.$  Since $z_1-w_1$ is irreducible, Hilbert's Nullstellensatz implies that $z_1 - w_1$ divides each $r_i$ and as the $r_i$ are polynomials, this implies that $Z_{K_1}$ is a finite set. Observe that 
\[ \widehat{K}_1(z,w) := \frac{K_1(z,w)}{1-z_1\bar{w}_1} = \sum_{i=1}^n \frac{\frac{r_i(z)}{p(z)}\frac{\overline{r_i(w)}}{\overline{p(w)}}}{1-z_1\bar{w}_1}\]
is the reproducing kernel for $\mathcal{S}_1$. Fix $w_1 \in \mathbb{D} \setminus Z_{K_1}$ and choose $w_2 \in \mathbb{D}$ so that at least one $r_i(w_1, w_2) \ne 0.$ Then setting $w=(w_1, w_2)$, we have $ \| \widehat{K}_1(\cdot, w)\|^2_{\mathcal{S}_1}  = \widehat{K}_1(w,w) \ne 0$ and since $\mathcal{S}_1$ is $z_1$-invariant,
\[  w_1 \left \| \widehat{K}_1(\cdot, w) \right\|^2_{\mathcal{S}_1} = w_1\widehat{K}_1(w,w) = \left \langle M_{z_1}\widehat{K}_1(\cdot, w), \widehat{K}_1(\cdot, w) \right \rangle_{\mathcal{S}_1} =  \left \langle  \widetilde{S}^1_{\Theta} \widehat{K}_1(\cdot, w), \widehat{K}_1(\cdot, w) \right \rangle_{\mathcal{S}_1}. \]
 Since $\| \widehat{K}_1(\cdot, w)\|^2_{\mathcal{S}_1}  \ne 0$, we can divide both sides of the above equation by it and conclude that the point $w_1 \in \mathcal{W}(P_{\mathcal{S}_1}  \widetilde{S}^1_{\Theta}|_{\mathcal{S}_1}).$ Since this works for all $w_1 \in \mathbb{D} \setminus Z_{K_1}$ and $Z_{K_1}$ is finite,
 \[\overline{\mathbb{D}} \subseteq \text{Clos }\mathcal{W}(P_{\mathcal{S}_1}  \widetilde{S}^1_{\Theta}|_{\mathcal{S}_1}).\]
 The other containment follows immediately because $ \widetilde{S}^1_{\Theta}$ is a contraction. \end{proof}

By Lemma \ref{lem:range}, the interesting behavior of $ \widetilde{S}^1_{\Theta}$ occurs on the subspace $\mathcal{S}_2$.  Because of this, as mentioned earlier, we primarily study this alternate compression of the shift
\[ S_{\Theta}^1 : = P_{\mathcal{S}_2} \widetilde{S}^1_{\Theta}|_{\mathcal{S}_2} =  P_{\mathcal{S}_2} M_{z_1} |_{\mathcal{S}_2}.\]
In the following result, we show that $S^1_{\Theta}$ is unitarily equivalent to a simple $z_2$-matrix-valued Toeplitz operator, as defined in \eqref{eqn:Toeplitz}.

\begin{theorem} \label{thm:unitary} Let $\Theta = \frac{\tilde{p}}{p}$ be rational inner of degree $(m,n)$ and let $\mathcal{S}_2$  be as in \eqref{eqn:sub}. Then there exists an $m\times m$  matrix-valued function $M_{\Theta}$, with entries that are rational functions of $\bar{z}_2$ and continuous on $\overline{\mathbb{D}},$ such that
\begin{equation} \label{eqn:unitary1}  S^1_{\Theta} = \mathcal{U} \ T_{M_{\Theta}} \ \mathcal{U}^*, \end{equation}
where $\mathcal{U}: H^2_2(\mathbb{D})^m \rightarrow \mathcal{S}_{2}$ is the unitary operator defined in \eqref{eqn:unitary}.
\end{theorem}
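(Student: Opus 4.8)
The plan is to conjugate $S^1_{\Theta}$ by the unitary $\mathcal{U}$ of \eqref{eqn:unitary} and to show that $A := \mathcal{U}^* S^1_{\Theta} \mathcal{U}$ on $H^2_2(\mathbb{D})^m$ is a $z_2$-matrix-valued Toeplitz operator, reading off its symbol in the process. Recall from Remark~\ref{rem:kernel} that $\mathcal{U}\vec{f} = \sum_{i=1}^m \frac{q_i}{p} f_i$ for an orthonormal basis $\{\frac{q_1}{p},\dots,\frac{q_m}{p}\}$ of $\mathcal{H}(K_2)$ with $\deg q_i \le (m-1,n)$. Because the vectors $z_2^l \vec{e}_j$ (with $l \ge 0$, $1 \le j \le m$) are total in $H^2_2(\mathbb{D})^m$, it suffices to evaluate the sesquilinear form $\langle A z_2^l \vec{e}_j, z_2^{l'}\vec{e}_i\rangle$ on these vectors and verify the Toeplitz signature: that it depends on $l'-l$ alone, with the value equal to a Fourier coefficient of the prospective symbol.

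First I would unwind the definitions. Since $\mathcal{U}(z_2^l \vec{e}_j) = \frac{q_j}{p} z_2^l$ and $S^1_{\Theta} = P_{\mathcal{S}_2} M_{z_1}|_{\mathcal{S}_2}$, pairing against $\frac{q_i}{p} z_2^{l'} \in \mathcal{S}_2$ lets me drop the projection, so that
\[ \langle A z_2^l \vec{e}_j, z_2^{l'}\vec{e}_i\rangle = \int_{\mathbb{T}^2} z_1 \frac{q_j(z)}{p(z)} \overline{\frac{q_i(z)}{p(z)}}\, z_2^{l-l'} \, d\sigma(z_1)\,d\sigma(z_2). \]
Fubini isolates the inner integral in $z_1$. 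For $z_2 = \tau \in \mathbb{T} \setminus E_{\Theta}$, Theorem~\ref{thm:rational} gives that the restriction map is unitary, so $\{\frac{q_k(\cdot,\tau)}{p(\cdot,\tau)}\}_{k=1}^m$ is an orthonormal basis of $\mathcal{K}_{\Theta_\tau}$; as $\frac{q_i(\cdot,\tau)}{p(\cdot,\tau)} \in \mathcal{K}_{\Theta_\tau}$, the inner integral is
\[ \langle S_{\Theta_\tau} \tfrac{q_j(\cdot,\tau)}{p(\cdot,\tau)}, \tfrac{q_i(\cdot,\tau)}{p(\cdot,\tau)} \rangle_{\mathcal{K}_{\Theta_\tau}} =: [M_{\Theta}(\tau)]_{ij}, \]
the $(i,j)$ entry of the one-variable compressed shift $S_{\Theta_\tau}$ in that basis. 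Hence the form equals $\int_{\mathbb{T}} [M_{\Theta}(\tau)]_{ij}\,\tau^{l-l'}\,d\sigma(\tau)$, the $(l'-l)$ Fourier coefficient of $[M_{\Theta}]_{ij}$, which depends only on $l'-l$ and matches the analogous form for $T_{M_{\Theta}}$ from \eqref{eqn:Toeplitz}. Since $\|S_{\Theta_\tau}\| \le 1$ forces $\|M_{\Theta}(\tau)\| \le 1$, the symbol is bounded and $T_{M_{\Theta}}$ is a well-defined bounded operator, so agreement on the total set yields $A = T_{M_{\Theta}}$. Alternatively, that $A$ is Toeplitz can be seen abstractly from the Brown--Halmos identity $S_2^* A S_2 = A$, where $S_2$ is multiplication by $z_2$ on $H^2_2(\mathbb{D})^m$; via $\mathcal{U} S_2 = M_{z_2}\mathcal{U}$ this reduces to $V^* S^1_{\Theta} V = S^1_{\Theta}$ on $\mathcal{S}_2$ with $V = M_{z_2}|_{\mathcal{S}_2}$, which follows from the $z_2$-invariance of $\mathcal{S}_2$ together with $M_{z_2}^* M_{z_2} = I$.

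It remains to show the entries $[M_{\Theta}(\tau)]_{ij}$ are restrictions of rational functions of $\bar{z}_2$ continuous on $\overline{\mathbb{D}}$. Returning to the inner integral and writing it as a contour integral over $|z_1| = 1$, I would use the boundary identities $\overline{p(z_1,\tau)} = \tilde{p}(z_1,\tau)/(z_1^m \tau^n)$ and $\overline{q_i(z_1,\tau)} = \tilde{q}_i(z_1,\tau)/(z_1^{m-1}\tau^n)$ on $\mathbb{T}^2$, where $\tilde{q}_i(z_1,z_2) := z_1^{m-1}z_2^n \overline{q_i(1/\bar{z}_1, 1/\bar{z}_2)}$. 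The conjugate powers of $\tau$ cancel, collapsing the integrand to the rational form
\[ [M_{\Theta}(\tau)]_{ij} = \frac{1}{2\pi i} \oint_{|z_1|=1} \frac{z_1\, q_j(z_1,\tau)\,\tilde{q}_i(z_1,\tau)}{p(z_1,\tau)\,\tilde{p}(z_1,\tau)}\, dz_1. \]
For $\tau \notin E_{\Theta}$ the $m$ zeros of $p(\cdot,\tau)$ lie outside $\overline{\mathbb{D}}$ while all $m$ zeros of $\tilde{p}(\cdot,\tau)$ are their reflections inside $\mathbb{D}$, so evaluating by residues at the interior poles expresses $[M_{\Theta}(\tau)]_{ij}$ as a symmetric function of these roots, hence a rational function of $\tau$. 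Its poles arise only from common zeros of $p$ and $\tilde{p}$; substituting $\bar{z}_2 = 1/\tau$ presents the entry as rational in $\bar{z}_2$ with poles pushed off $\overline{\mathbb{D}}$, and the boundary bound $|[M_{\Theta}(\tau)]_{ij}| \le 1$ rules out residual poles on $\mathbb{T}$ (including at $E_{\Theta}$), giving continuity on all of $\overline{\mathbb{D}}$.

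The conceptual core — that $A$ is Toeplitz with the slice-wise symbol $M_{\Theta}(\tau)$ assembled from the one-variable compressed shifts $S_{\Theta_\tau}$ — is short once Theorem~\ref{thm:rational} is available. I expect the main obstacle to be the regularity claim: carrying out the residue bookkeeping carefully enough to confirm that the entries are genuinely rational in $\bar{z}_2$ rather than merely real-analytic, and, above all, tracking the location of the poles to establish continuity across $\overline{\mathbb{D}}$, in particular as $\tau$ approaches the exceptional set $E_{\Theta}$ where zeros of $p(\cdot,\tau)$ and $\tilde{p}(\cdot,\tau)$ coalesce on $\mathbb{T}$.
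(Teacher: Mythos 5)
Your first half --- the operator identity $S^1_{\Theta} = \mathcal{U}\, T_{M_{\Theta}}\, \mathcal{U}^*$, with symbol defined slice-wise by $[M_{\Theta}(\tau)]_{ij} = \langle S_{\Theta_{\tau}} \tfrac{q_j(\cdot,\tau)}{p(\cdot,\tau)}, \tfrac{q_i(\cdot,\tau)}{p(\cdot,\tau)}\rangle_{\mathcal{K}_{\Theta_{\tau}}}$ --- is correct, and it follows a genuinely different route from the paper's: you compute the forms $\langle \mathcal{U}^* S^1_{\Theta}\mathcal{U}\, z_2^l \vec{e}_j, z_2^{l'}\vec{e}_i\rangle$ directly, use Fubini and Theorem \ref{thm:rational} to identify the inner $z_1$-integral with an entry of the slice operator $S_{\Theta_\tau}$, and read off the Toeplitz signature; the paper instead starts from the $M_{z_1}^*$-invariance of $\mathcal{S}_2$ (Proposition 3.4 of \cite{b13}) to produce functions $h_{ij}\in H^2_2(\mathbb{D})$ with $M_{z_1}^*(q_j/p) = \sum_i (q_i/p)h_{ij}$, sets $M_\Theta = H^*$, and verifies the identity by an inner-product computation. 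Your route even yields $\mathcal{W}(M_{\Theta}(\tau)) = \mathcal{W}(S_{\Theta_{\tau}})$, which the paper only establishes later (Theorem \ref{thm:generalnr}).

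The gap is in the regularity claim, and it is genuine. What you actually establish is (i) that $[M_{\Theta}]_{ij}$ agrees on $\mathbb{T}\setminus E_{\Theta}$ with a rational function of $\tau$ (via the residue sum), and (ii) the bound $|[M_{\Theta}(\tau)]_{ij}|\le 1$ on $\mathbb{T}$. These two facts do \emph{not} imply that the entry, viewed as a rational function of $w=\bar{z}_2$, is pole-free on $\overline{\mathbb{D}}$: boundedness on $\mathbb{T}$ excludes poles on $\mathbb{T}$ only, and a rational function bounded on $\mathbb{T}$ can perfectly well have poles inside $\mathbb{D}$ (e.g.\ $w\mapsto (w-\tfrac12)^{-1}$). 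Nor is the assertion that the poles are ``pushed off $\overline{\mathbb{D}}$'' supported by the residue computation: the potential poles of the residue sum are (up to leading-coefficient degenerations) the zeros of $\mathrm{Res}_{z_1}\bigl(p(\cdot,\tau),\tilde{p}(\cdot,\tau)\bigr)$, i.e.\ the $z_2$-coordinates of common zeros of $p$ and $\tilde{p}$; off $\mathbb{T}^2$ such common zeros occur in pairs reflected through the torus, so they include points $\tau_0$ with $|\tau_0|>1$, which correspond to $w = 1/\tau_0 \in \mathbb{D}$. Concretely, for the paper's example $\Theta = \theta_1\theta_2$ with $p_1 = 2-z_1-z_2$ and $p_2 = 3-z_1-2z_2$, the polynomials $p=p_1p_2$ and $\tilde{p}=\tilde{p}_1\tilde{p}_2$ share the zero $(2/3,\,4/3)$, so the resultant vanishes at $\tau_0 = 4/3$, i.e.\ at $w = 3/4 \in \mathbb{D}$. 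No pole actually appears there, but only because of cancellations peculiar to the structure of the basis (here $q_1 = i\sqrt{2}(z_2-1)p_2$ and $\tilde{q}_1 = -i\sqrt{2}(1-z_2)\tilde{p}_2$, so each product $q_j\tilde{q}_i$ vanishes at the common zero); your argument neither detects nor proves such cancellation in general.

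What is missing is exactly the structural input the paper leads with: that the entries are boundary values of conjugates of $H^2_2(\mathbb{D})$-functions. In your Fourier-coefficient framework this is the co-analyticity statement $\widehat{[M_{\Theta}]_{ij}}(k) = 0$ for $k>0$, equivalently $\langle S^1_{\Theta}\tfrac{q_j}{p},\, z_2^k \tfrac{q_i}{p}\rangle = 0$ for all $k\ge 1$, i.e.\ $S^1_{\Theta}\mathcal{H}(K_2)\subseteq \mathcal{H}(K_2)$; the natural proof of this uses that $\mathcal{S}_2$ is $M_{z_1}^*$-invariant (Proposition 3.4 of \cite{b13}), so that $(S^1_{\Theta})^* = M_{z_1}^*|_{\mathcal{S}_2}$ commutes with $M_{z_2}$ and maps $z_2\mathcal{S}_2$ into itself. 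Once co-analyticity is in hand, your (i) and (ii) do finish: a rational function of $w$, bounded on $\mathbb{T}$, whose boundary Fourier series has no negative frequencies, has no poles in $\overline{\mathbb{D}}$ by partial fractions. Alternatively one can argue as the paper does, combining holomorphy of the $h_{ij}$ on $\mathbb{D}$, rationality, and the $H^2$ growth estimate $|h_{ij}(z_2)|\le \|h_{ij}\|_{H^2_2(\mathbb{D})}(1-|z_2|^2)^{-1/2}$ to exclude poles in $\overline{\mathbb{D}}$. So your Toeplitz identity stands, but the rationality-and-continuity half of the theorem cannot be completed from boundary boundedness alone; it requires the backward-shift invariance (or an equivalent) that your proposal omits.
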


\begin{proof} Throughout this proof, we use the notation defined and explained in Remark \ref{rem:kernel}. Recall that $\{ \frac{q_1}{p}, \dots, \frac{q_m}{p}\} $ denotes the previously-obtained orthonormal basis of $\mathcal{H}(K_2):=  \mathcal{S}_2 \ominus z_2 \mathcal{S}_2.$

By Proposition $3.4$ in \cite{b13}, $\mathcal{S}_2$ is invariant under the backward shift operator $S_{\Theta}^{1*}=M_{z_1}^*|_{\mathcal{S}_2}$. This means that there are one-variable functions $h_{1j}, \dots, h_{mj} \in H_2^2(\mathbb{D})$ such that
\begin{equation} \label{eqn:bwsform1}  M_{z_1}^*\left( \frac{q_j}{p} \right)= \frac{q_1}{p} h_{1j}  + \dots + \frac{q_m}{p} h_{mj}, \quad \text{ for } j=1, \dots, m.\end{equation} Define the $m\times m$ matrix-valued function $H$ by
\begin{equation} \label{eqn:H} H:= \begin{bmatrix} h_{11} & \cdots & h_{1m} \\
\vdots & \ddots & \vdots \\
h_{m1} & \cdots & h_{mm}
\end{bmatrix},
\end{equation}
and define the matrix-valued function $M_{\Theta}$ by
\begin{equation} \label{eqn:M} M_{\Theta} := H^*. \end{equation}
 To establish the properties of $M_{\Theta}$, we will show that $H$ has entries that are rational in $z_2$ and continuous on $\overline{\mathbb{D}}.$ First rewrite the terms in \eqref{eqn:bwsform1} as
\[ \left(M_{z_1}^*\frac{q_j}{p} \right)(z) = \frac{Q_j(z)}{p(z) p(0,z_2)} \  \text{ and } \ Q_{j}(z)  = \sum_{k=0}^{m-1} Q_{jk}(z_2)z_1^k\]
for a polynomial $Q_j$ and write 
\[ q_i(z) = \sum_{k=0}^{m-1} q_{ik}(z_2)z_1^k, \qquad \text{ for } i=1, \dots, m.\]
Then by canceling the $p$ from each denominator from \eqref{eqn:bwsform1} and looking at the coefficients in front of each $z_1^k$ separately, \eqref{eqn:bwsform1} can be rewritten as 
\[ \begin{bmatrix} q_{10}(z_2) & \dots & q_{m0}(z_2) \\
\vdots & \ddots & \vdots \\
q_{1(m-1)}(z_2) & \dots & q_{m(m-1)}(z_2) 
\end{bmatrix} 
\begin{bmatrix} 
h_{1j}(z_2) \\
\vdots \\
h_{mj}(z_2) 
\end{bmatrix} 
= \begin{bmatrix}
\frac{Q_{j0}(z_2)}{p(0,z_2)} \\
\vdots \\
\frac{Q_{j(m-1)}(z_2)}{p(0,z_2)} 
\end{bmatrix},
\]
for $z_2 \in \mathbb{D}$ and $j=1, \dots, m.$
Let $A_j$ denote the $m\times m$ matrix function in the above equation. Since $\det A_j$ is a one-variable polynomial, it is either identically zero or has finitely many zeros. First assume $\det A_j\equiv 0$, so that clearly $\det A_j(\tau) = 0$ for each $\tau \in \mathbb{T}$. This implies that for each fixed $\tau \in \mathbb{T}$, one $q_k(\cdot, \tau)$ can be written as a linear combination of the other $q_i(\cdot, \tau).$ However by Theorem \ref{thm:rational}, for $\tau\in \mathbb{T} \setminus E_{\Theta}$, the set 
\[ \left\{\frac{q_1}{p}(\cdot,\tau), \dots , \frac{q_m}{p}(\cdot,\tau)\right \}\]
 is a basis for the $m$-dimensional set $\mathcal{K}_{\Theta_\tau}$. Thus the set must be linearly independent, a contradiction.

Hence, $\det A_j \not \equiv 0.$  Thus, the matrix $A_j(z_2)$ is invertible except at (at most) a finite number of points $z_2 \in \mathbb{D}$ and so we can solve for each column of $H$ as 
\[\begin{bmatrix} 
h_{1j}(z_2) \\
\vdots \\
h_{mj}(z_2) 
\end{bmatrix}  =  \begin{bmatrix} q_{10}(z_2) & \dots & q_{m0}(z_2) \\
\vdots & \ddots & \vdots \\
q_{1(m-1)}(z_2) & \dots & q_{m(m-1)}(z_2) 
\end{bmatrix}^{-1} 
\begin{bmatrix}
\frac{r_{j0}(z_2)}{p(0,z_2)} \\
\vdots \\
\frac{r_{j(m-1)}(z_2)}{p(0,z_2)} 
\end{bmatrix}.
\]
This shows that the entries of $H$ are rational functions in $z_2$ and so by \eqref{eqn:M}, the entries of $M_{\Theta}$ are rational in $\bar{z}_2$.

Since the entries of $H$ are also in $H_2^2(\mathbb{D})$, we claim that they cannot have any singularities in $\overline{\mathbb{D}}$. That there are no singularities in $\mathbb{D}$ should be clear. To see that there are no singularities on $\mathbb{T}$, proceed by contradiction and assume that some $h_{ij}$ has a singularity at a $\tau \in \mathbb{T}$. Then, after writing $h_{ij}$ as a ratio of one-variable polynomials with no common factors, the denominator of $h_{ij}$ vanishes at $\tau$ but the numerator does not. 
By the reproducing property of $H_2^2(\mathbb{D})$, we know that for each $z_2 \in \mathbb{D}$, 
\[ | h_{ij}(z_2) | = \left | \left \langle h_{ij}, \frac{1}{1-\cdot \bar{z_2}} \right \rangle_{H_2^2(\mathbb{D})} \right|  \le \| h_{ij} \|_{H_2^2(\mathbb{D})} \frac{1}{ \sqrt{1- |z_2|^2}}.\]
But since $h_{ij}$ has a singularity at $\tau$, there is a sequence $\{z_{2,n}\} \rightarrow \tau$ and positive constant $C$ such that $|h_{ij}(z_{2,n})| \ge C \frac{1}{1-|z_{2,n}|}$ for each $n$, a contradiction. Thus $H$, and hence $M_{\Theta},$ has entries continuous on $\overline{\mathbb{D}}.$ 

Now we establish \eqref{eqn:unitary1}. Fix $f,g \in \mathcal{S}_2$. Then by Remark \ref{rem:kernel}, there exist vector-valued functions $\vec{f}=(f_1, \dots, f_m) , \vec{g}=(g_1, \dots, g_m) \in  H^2_2(\mathbb{D})^m$ such that 
\[ f =\sum_{i=1}^m \frac{q_i}{p} f_i= \mathcal{U} \ \vec{f} \ \ \text{ and } \ \ g =\sum_{i=1}^m \frac{q_i}{p} g_i = \mathcal{U} \ \vec{g},\]
so $\vec{f} = \mathcal{U}^*f$ and $\vec{g} = \mathcal{U}^* g.$ Then using the inner product formulas from  Remark \ref{rem:kernel}, we can compute
\[
\begin{aligned} 
\left \langle S^1_{\Theta} f, g  \right \rangle_{\mathcal{S}_2}
 &= \left \langle f, M_{z_1}^* g \right \rangle_{\mathcal{S}_2} \\
& = \left \langle  \sum_{i=1}^m \frac{q_i}{p} f_i, \sum_{j=1}^m M^*_{{z}_1} \left( \frac{q_j}{p} \right) g_j \right \rangle_{\mathcal{S}_2} \\
& = \sum_{i,j=1}^m \left \langle \frac{q_i}{p} f_i,   \frac{q_i}{p} h_{ij} g_j \right \rangle_{\mathcal{S}_2}\\
& = \sum_{i,j=1}^m \left \langle  f_i,  h_{ij} g_j \right \rangle_{H_2^2(\mathbb{D})}\\
& = \left \langle \vec{f}, T_H \vec{g} \right \rangle_{H_2^2(\mathbb{D})^m} \\
& = \left \langle T_{M_{\Theta}} \vec{f}, \vec{g} \right \rangle_{H_2^2(\mathbb{D})^m}\\
& =  \left \langle \ \mathcal{U} \ T_{M_{\Theta}} \ \mathcal{U}^* f, g\right \rangle_{\mathcal{S}_2},
\end{aligned}
\]
where $T_H$ is the $z_2$-matrix-valued Toeplitz operator with symbol $H$. Since $f, g \in \mathcal{S}_2$ were arbitrary, this immediately gives \eqref{eqn:unitary1}. \end{proof}

\begin{example} Before proceeding, observe that Theorem \ref{thm:unitary} generalizes the matrix from \eqref{eqn:onevar}. Specifically, let $\Theta = \frac{\tilde{p}}{p}$ be a rational inner function with $\deg \Theta = (m,0)$, so $\Theta$ is a finite Blaschke product of degree $m$. Then the associated two-variable model space is
\[ \mathcal{K}_{\Theta} = \mathcal{H} \left( \frac{1- \Theta(z_1)\overline{\Theta(w_1)}}{(1-z_1\overline{w}_1)(1-z_2\overline{w}_2)}\right),\]
which is $z_2$-invariant. Thus, we can set $\mathcal{S}_2 = \mathcal{K}_{\Theta}$ and $\mathcal{S}_1 = \{0\}$. One can actually show that this is the only choice of $\mathcal{S}_1$ and $\mathcal{S}_2.$ Then
\[ \mathcal{H}(K_2) := \mathcal{S}_2 \ominus z_2 \mathcal{S}_2 = \mathcal{H} \left( \frac{1-\Theta(z_1)\overline{\Theta(w_1)}}{1-z_1\overline{w}_1}\right)\]
is the one-variable model space associated to $\Theta$ with independent variable $z_1$. It follows immediately that the one-variable Takenaka-Malmquist-Walsh basis $\{f_1, \dots, f_m\}$ is an orthonormal basis for $\mathcal{H}(K_2)$ and each $f_i = \frac{q_i}{p}$ for some one-variable polynomial $q_i$ with $\deg q_i \le m-1.$ Because the one-variable model space (with independent variable $z_1$) is also invariant under the backward shift $M_{z_1}^*$, we can conclude that the unique $h_{ij}$ from \eqref{eqn:bwsform1} are constants. Then since $\mathcal{H}(K_2)$ is a subspace of $\mathcal{K}_{\Theta}$, we can use \eqref{eqn:bwsform1}-\eqref{eqn:M} to conclude
\[ \left( {M_{\Theta}}\right)_{ij} = \overline{H_{ji}} =  \overline{ \left \langle M_{z_1}^*\frac{q_i}{p}, \frac{q_j}{p} \right \rangle_{\mathcal{K}_{\Theta}}} = \overline{ \left \langle M_{z_1}^*\frac{q_i}{p}, \frac{q_j}{p} \right \rangle_{\mathcal{H}(K_2)}}  =  \left \langle  P_{\mathcal{H}(K_2)} M_{z_1} f_j ,f_i \right \rangle_{\mathcal{H}(K_2)},\]
which is a constant matrix agreeing with the matrix  from \eqref{eqn:onevar}.
\end{example}

As a corollary of Theorem \ref{thm:unitary}, we can characterize the numerical range of $ S^1_{\Theta},$ denoted by $\mathcal{W}( S^1_{\Theta})$.

\begin{corollary} \label{thm:nr} Let $\Theta = \frac{\tilde{p}}{p}$ be rational inner of degree $(m,n)$, let $\mathcal{S}_2$  be as in \eqref{eqn:sub}, and let $M_{\Theta}$ be as in Theorem \ref{thm:unitary}. Then
\begin{equation} \label{eqn:nr} \text{Clos}\left( \mathcal{W}\left ( S^1_{\Theta} \right)\right) =  \text{Conv} \Big( \ \bigcup_{\tau \in \mathbb{T}} \mathcal{W}\left(M_{\Theta}(\tau) \right) \ \Big).\end{equation}
\end{corollary}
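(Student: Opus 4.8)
The plan is to deduce Corollary~\ref{thm:nr} from the unitary equivalence $S^1_{\Theta} = \mathcal{U}\, T_{M_{\Theta}}\, \mathcal{U}^*$ established in Theorem~\ref{thm:unitary}, since unitary equivalence preserves numerical ranges exactly, giving $\mathcal{W}(S^1_{\Theta}) = \mathcal{W}(T_{M_{\Theta}})$. Thus it suffices to prove
\[ \text{Clos}\big(\mathcal{W}(T_{M_{\Theta}})\big) = \text{Conv}\Big( \bigcup_{\tau \in \mathbb{T}} \mathcal{W}\big(M_{\Theta}(\tau)\big) \Big)\]
for the $z_2$-matrix-valued Toeplitz operator $T_{M_{\Theta}}$ on $H^2_2(\mathbb{D})^m$, using the key feature that $M_{\Theta}$ has entries continuous on $\overline{\mathbb{D}}$ (hence, restricted to $\mathbb{T}$, a continuous matrix symbol). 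This is really a statement about the numerical range of a Toeplitz operator with continuous matrix symbol, and I would prove the two containments separately.

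For the containment ``$\supseteq$'', I would show each $\mathcal{W}(M_{\Theta}(\tau))$ lies in $\text{Clos}(\mathcal{W}(T_{M_{\Theta}}))$ and then use convexity (the numerical range, and hence its closure, is always convex by Toeplitz--Hausdorff). Fix $\tau \in \mathbb{T}$ and a unit vector $v \in \mathbb{C}^m$ realizing a point $\langle M_{\Theta}(\tau)v, v\rangle \in \mathcal{W}(M_{\Theta}(\tau))$. The idea is to build approximating unit vectors in $H^2_2(\mathbb{D})^m$ that concentrate near $\tau$: take $\vec{f}_k = \phi_k\, v$, where $\phi_k \in H^2_2(\mathbb{D})$ are normalized reproducing kernels (or Fejér-type kernels) whose mass concentrates at $\tau \in \mathbb{T}$. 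A standard computation gives $\langle T_{M_{\Theta}} \vec{f}_k, \vec{f}_k\rangle = \int_{\mathbb{T}} |\phi_k|^2 \langle M_{\Theta} v, v\rangle\, d\sigma$, and by continuity of $M_{\Theta}$ on $\overline{\mathbb{D}}$ this converges to $\langle M_{\Theta}(\tau)v, v\rangle$ as the mass localizes at $\tau$. Taking the union over $\tau$ and $v$, then the convex hull, yields one inclusion.

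For the reverse containment ``$\subseteq$'', I would bound $\langle T_{M_{\Theta}} \vec{f}, \vec{f}\rangle$ for an arbitrary unit vector $\vec{f} \in H^2_2(\mathbb{D})^m$ by the right-hand set. Writing out the definition of $T_{M_{\Theta}}$ and using that the projection $P_{H_2^2(\mathbb{D})^m}$ can be dropped inside the inner product against $\vec{f} \in H^2_2(\mathbb{D})^m$, we get
\[ \langle T_{M_{\Theta}} \vec{f}, \vec{f}\rangle = \int_{\mathbb{T}} \big\langle M_{\Theta}(\tau) \vec{f}(\tau), \vec{f}(\tau)\big\rangle_{\mathbb{C}^m}\, d\sigma(\tau).\]
For each $\tau$ with $\vec{f}(\tau)\neq 0$, the integrand equals $\|\vec{f}(\tau)\|^2 \big\langle M_{\Theta}(\tau)u_\tau, u_\tau\big\rangle$ with $u_\tau = \vec{f}(\tau)/\|\vec{f}(\tau)\|$ a unit vector, so it lies in $\|\vec{f}(\tau)\|^2\, \mathcal{W}(M_{\Theta}(\tau))$. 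Since $\int_{\mathbb{T}} \|\vec{f}(\tau)\|^2\, d\sigma = \|\vec{f}\|^2 = 1$, the integral is a ``continuous convex combination'' (an average with respect to the probability measure $\|\vec{f}(\tau)\|^2 d\sigma$) of points in $\bigcup_\tau \mathcal{W}(M_{\Theta}(\tau))$, hence lies in the closed convex hull of that union. This places $\mathcal{W}(T_{M_{\Theta}})$ inside the convex hull, and taking closures gives the claim.

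The main obstacle is making the ``continuous convex combination'' argument in the reverse containment rigorous: one must argue that an integral-average of points lying in a (possibly not closed) union of convex sets lands in the \emph{closed} convex hull. This is where one invokes that each $\mathcal{W}(M_{\Theta}(\tau))$ is compact and convex and that $\tau \mapsto M_{\Theta}(\tau)$ is continuous on the compact set $\mathbb{T}$, so that $\bigcup_\tau \mathcal{W}(M_{\Theta}(\tau))$ is bounded and its convex hull is controlled; one can then discretize the integral by a Riemann-sum/partition-of-$\mathbb{T}$ argument to approximate the integral by genuine finite convex combinations, invoke Carathéodory if a dimension count is wanted, and pass to the limit inside the closure. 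The continuity and compactness supplied by Theorem~\ref{thm:unitary} are exactly what make both the localization in the forward direction and this averaging in the reverse direction go through.
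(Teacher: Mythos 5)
Your proof is correct, but it takes a genuinely different route from the paper at the decisive step. Both arguments begin identically: Theorem \ref{thm:unitary} plus unitary invariance of the numerical range reduce the problem to the Toeplitz operator $T_{M_{\Theta}}$. At that point the paper finishes in two lines by citing \cite[Theorem 1]{SpitBeb}, which states that for a bounded matrix symbol $F$ one has $\text{Clos}(\mathcal{W}(T_F)) = \text{Conv}\{\mathcal{W}(A): A \in \mathcal{R}(F)\}$ with $\mathcal{R}(F)$ the \emph{essential} range, and then uses continuity of $M_{\Theta}$ only to identify the essential range with the actual range $\{M_{\Theta}(\tau):\tau\in\mathbb{T}\}$. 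You instead re-prove that Toeplitz theorem from scratch in the continuous-symbol case: the inclusion $\supseteq$ by localization with normalized reproducing kernels (whose squared moduli are Poisson kernels, so $|\phi_k|^2\,d\sigma \to \delta_\tau$ weak-$*$ against continuous integrands), and the inclusion $\subseteq$ by reading $\langle T_{M_{\Theta}}\vec f, \vec f\rangle = \int_{\mathbb{T}}\langle M_{\Theta}(\tau)\vec f(\tau),\vec f(\tau)\rangle_{\mathbb{C}^m}\,d\sigma(\tau)$ as the barycenter of the probability measure $\|\vec f(\tau)\|^2\,d\sigma$ over points of the matrix numerical ranges. The paper's route buys brevity and generality (the cited result covers $L^\infty$ symbols); yours buys a self-contained, elementary argument that makes visible exactly where the continuity furnished by Theorem \ref{thm:unitary} enters. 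Two details in your sketch are load-bearing and should be made explicit: (i) for the barycenter step, the clean argument is via support functions --- any closed half-plane containing $\bigcup_{\tau}\mathcal{W}(M_{\Theta}(\tau))$ also contains the integral, so your Riemann-sum discretization is unnecessary; and (ii) since the right-hand side of \eqref{eqn:nr} carries no closure, you must verify, as the paper remarks, that $\text{Conv}\big(\bigcup_{\tau}\mathcal{W}(M_{\Theta}(\tau))\big)$ is already closed: the union is the image of the compact set $\mathbb{T}\times\{v\in\mathbb{C}^m:\|v\|=1\}$ under the continuous map $(\tau,v)\mapsto\langle M_{\Theta}(\tau)v,v\rangle_{\mathbb{C}^m}$, hence compact, and the convex hull of a compact planar set is compact by Carath\'eodory. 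You gesture at both points, but they carry the proof.
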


\begin{proof}  By Theorem \ref{thm:unitary}, the operator $ S^1_{\Theta}$ has the same numerical range as the $z_2$-matrix-valued Toeplitz operator $T_{M_\Theta}: H_2^2(\mathbb{D})^m \rightarrow  H_2^2(\mathbb{D})^m$. By \cite[Theorem 1]{SpitBeb}, the closure of the numerical range $\mathcal{W}(T_{M_{\Theta}})$ is equal to
$$\mbox{Conv} \left \{\mathcal{W}(A): A \in \mathcal{R}(M_{\Theta})\right\},$$ 
where $\mathcal{R}(M_{\Theta} )$ is the essential range of $M_{\Theta}$ as a function on $\mathbb{T}.$ It is easy to see that this set is closed and so, we do not need to take its closure. Since $M_{\Theta}$ is continuous on $\mathbb{T}$, its essential range will equal its range, i.e.
\[  \mbox{Conv} \left\{\mathcal{W}(A): A \in \mathcal{R}(M_{\Theta}) \right\} =  \text{Conv} \Big( \ \bigcup_{\tau \in \mathbb{T}} \mathcal{W}\left(M_{\Theta}(\tau) \right) \ \Big), \]
proving \eqref{eqn:nr}.
\end{proof}

One can also consider the family of one-variable functions $\{ \Theta_{\tau} = \Theta(\cdot, \tau): \tau \in \mathbb{T} \setminus E_{\Theta} \},$ where $E_{\Theta}$ is the exceptional set defined in \eqref{eqn:exceptional}. For each $\tau \in \mathbb{T}\setminus E_{\Theta}$, let $S_{\Theta_{\tau}}$  denote the compression of the shift on $\mathcal{K}_{\Theta_{\tau}},$ the one-variable model space associated to $\Theta_{\tau}.$ It turns out that the numerical ranges $\mathcal{W}(S_{\Theta_{\tau}})$ are closely related to $\mathcal{W}( S^1_{\Theta}).$

\begin{theorem} \label{thm:generalnr} Let $\Theta = \frac{\tilde{p}}{p}$ be rational inner of degree $(m,n)$, let $E_{\Theta}$ be the exceptional set from \eqref{eqn:exceptional} and let $\mathcal{S}_2$  be as in \eqref{eqn:sub}. Then
\begin{equation} \label{eqn:onevariable}\text{Clos}\left( \mathcal{W} \left( S^1_{\Theta}\right) \right) = \text{Clos} \Big( \text{Conv}\Big( \bigcup_{\tau \in \mathbb{T} \setminus E_{\Theta}} \mathcal{W}(S_{\Theta_{\tau}})\Big)\Big).\end{equation}
\end{theorem}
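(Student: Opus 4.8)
The plan is to combine the fiberwise description of $\mathcal{W}(S^1_{\Theta})$ coming from Corollary \ref{thm:nr} with an identification, for each non-exceptional slice $\tau$, of the matrix $M_{\Theta}(\tau)$ with the one-variable compressed shift $S_{\Theta_\tau}$; the only discrepancy between the two formulas will then be the finite exceptional set $E_{\Theta}$, which I would remove by a short continuity argument.

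The heart of the matter is to show that $\mathcal{W}(M_{\Theta}(\tau)) = \mathcal{W}(S_{\Theta_\tau})$ for every $\tau \in \mathbb{T}\setminus E_{\Theta}$. To this end I would slice the backward-shift relation \eqref{eqn:bwsform1} at $z_2 = \tau$. Writing $f_i^{\tau} := \mathcal{J}_{\tau}\big(\tfrac{q_i}{p}\big) = \tfrac{q_i(\cdot,\tau)}{p(\cdot,\tau)}$, Theorem \ref{thm:rational} guarantees that $\{f_1^{\tau},\dots,f_m^{\tau}\}$ is an orthonormal basis of $\mathcal{K}_{\Theta_\tau}$, since $\mathcal{J}_{\tau}$ is unitary. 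The key observation is that the two-variable backward shift $M_{z_1}^*$ acts on the $z_1$-variable as a coefficient shift and therefore commutes with restriction in the $z_2$-variable; evaluating \eqref{eqn:bwsform1} at $z_2 = \tau$ then gives
\[ M_{z_1}^* f_j^{\tau} \;=\; \sum_{i=1}^m h_{ij}(\tau)\, f_i^{\tau} \qquad \text{in } \mathcal{K}_{\Theta_\tau}. \]
Because the one-variable model space $\mathcal{K}_{\Theta_\tau}$ is invariant under $M_{z_1}^*$, one has $M_{z_1}^*|_{\mathcal{K}_{\Theta_\tau}} = S_{\Theta_\tau}^*$, so the matrix of $S_{\Theta_\tau}^*$ in the basis $\{f_i^{\tau}\}$ is exactly $H(\tau)$ and hence the matrix of $S_{\Theta_\tau}$ is $H(\tau)^* = M_{\Theta}(\tau)$. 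Since representation in an orthonormal basis is a unitary equivalence and the numerical range is a unitary invariant, this yields $\mathcal{W}(S_{\Theta_\tau}) = \mathcal{W}(M_{\Theta}(\tau))$.

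It then remains to reconcile the index sets $\mathbb{T}$ and $\mathbb{T}\setminus E_{\Theta}$. By Corollary \ref{thm:nr}, $\text{Clos}(\mathcal{W}(S^1_{\Theta})) = \text{Conv}\big(\bigcup_{\tau\in\mathbb{T}} \mathcal{W}(M_{\Theta}(\tau))\big)$, and this convex hull is already closed. Since $E_{\Theta}$ is finite and $M_{\Theta}$ is continuous on $\mathbb{T}$ by Theorem \ref{thm:unitary}, for any $\tau_0 \in E_{\Theta}$ I would pick $\tau_n \to \tau_0$ with $\tau_n \in \mathbb{T}\setminus E_{\Theta}$; continuity of $M_{\Theta}$ together with continuity of $A \mapsto \mathcal{W}(A)$ in the Hausdorff metric forces $\mathcal{W}(M_{\Theta}(\tau_0))$ into the closure of $\bigcup_{\tau\in\mathbb{T}\setminus E_{\Theta}} \mathcal{W}(M_{\Theta}(\tau))$. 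Consequently, deleting the finitely many exceptional fibers leaves the closed convex hull unchanged, so
\[ \text{Conv}\Big(\bigcup_{\tau\in\mathbb{T}} \mathcal{W}(M_{\Theta}(\tau))\Big) = \text{Clos}\Big(\text{Conv}\Big(\bigcup_{\tau\in\mathbb{T}\setminus E_{\Theta}} \mathcal{W}(M_{\Theta}(\tau))\Big)\Big), \]
and substituting $\mathcal{W}(M_{\Theta}(\tau)) = \mathcal{W}(S_{\Theta_\tau})$ on $\mathbb{T}\setminus E_{\Theta}$ produces \eqref{eqn:onevariable}.

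I expect the main obstacle to be the fiber identification of the second paragraph—specifically, justifying that the symbol $M_{\Theta}$, evaluated at a boundary point $\tau \in \mathbb{T}\setminus E_{\Theta}$, really is the one-variable compressed shift $S_{\Theta_\tau}$. The delicate points are that \eqref{eqn:bwsform1} is a priori an identity in $\mathcal{S}_2 \subset H^2(\mathbb{D}^2)$ rather than on a boundary slice, so one must check that restriction to $z_2 = \tau$ is legitimate (both sides extend continuously there because $\tau \notin E_{\Theta}$ means $p(\cdot,\tau)$ has no zeros on $\overline{\mathbb{D}}$), and that this restriction genuinely intertwines $M_{z_1}^*$ on $\mathcal{S}_2$ with $M_{z_1}^*$ on $\mathcal{K}_{\Theta_\tau}$. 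By comparison, the continuity argument of the third paragraph is routine.
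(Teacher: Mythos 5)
Your proposal is correct and takes essentially the same route as the paper: the paper likewise slices the backward-shift relation \eqref{eqn:bwsform1} at each $\tau \in \mathbb{T}\setminus E_{\Theta}$ (justified, exactly as you note, by the functions being rational with no singularities there and by $M_{z_1}^*$ treating $z_2$ as a constant) to obtain $\mathcal{W}(S_{\Theta_{\tau}}) = \mathcal{W}(M_{\Theta}(\tau))$, and then discards the finite exceptional set using Corollary \ref{thm:nr} together with the continuity of $M_{\Theta}$ on $\mathbb{T}$. The only cosmetic difference is that the paper verifies the fiber identification by expanding the quadratic form $\langle S_{\Theta_{\tau}} f, f\rangle_{\mathcal{K}_{\Theta_{\tau}}}$ in the orthonormal basis $\{\frac{q_i}{p}(\cdot,\tau)\}$, whereas you phrase the same computation as reading off the matrix of $S_{\Theta_{\tau}}^*$ in that basis.
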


\begin{proof} This proof will use the same notation as the proof of Theorem \ref{thm:unitary}. First fix $\tau \in \mathbb{T} \setminus E_{\Theta}$. By Theorem \ref{thm:rational}, the set
\[ \Big\{ \frac{q_1}{p}(\cdot, \tau), \dots, \frac{q_m}{p}(\cdot, \tau) \Big\}\]
is an orthonormal basis for $\mathcal{K}_{\Theta_{\tau}}$ with independent variable $z_1.$ Consider  \eqref{eqn:bwsform1}. As all involved functions are rational with no singularities on $\overline{\mathbb{D}} \times \left(\overline{\mathbb{D}} \setminus E_{\Theta} \right)$ and the backward shift operator $S_{\Theta}^{1*}=M^*_{z_1}|_{\mathcal{S}_2}$ treats $z_2$ like a constant, we can extend this formula to the functions $\frac{q_i}{p}(\cdot, \tau).$
Specifically, \[  M_{z_1}^* \left( \frac{q_j}{p}(\cdot, \tau) \right) = \left( M_{z_1}^* \frac{q_j}{p}\right)(\cdot, \tau) =  \frac{q_1}{p}(\cdot, \tau) h_{1j}(\tau)  + \dots + \frac{q_m}{p}(\cdot, \tau) h_{mj}(\tau), 
\]
for $j=1, \dots, m.$ Now, we use arguments similar to those in the proof of Theorem \ref{thm:unitary} to show $\mathcal{W}(S_{\Theta_{\tau}}) = \mathcal{W}(M_{\Theta}(\tau)).$ 
Specifically, fix $f \in \mathcal{K}_{\Theta_{\tau}}$. Then there exist unique constants
$a_1, \dots, a_m \in \mathbb{C}$ such that
\[ f= \sum_{i=1}^m a_i \frac{q_i}{p}(\cdot,\tau).\]
Moreover,  $\| f\|^2_{\mathcal{K}_{\Theta_{\tau}}} =1$ if and only if   $\sum_{i=1}^m |a_i|^2=1,$ i.e.~exactly when
$\vec{a}:= (a_1, \dots, a_m) \in \mathbb{C}^m$ has norm one. Then,
\[
\begin{aligned} 
\left \langle S_{\Theta_{\tau}} f,f \right \rangle_{\mathcal{K}_{\Theta_{\tau}}} &= \left \langle f,  M_{z_1}^* f \right \rangle_{\mathcal{K}_{\Theta_{\tau}}} \\
& = \left \langle  \sum_{i=1}^m a_i \frac{q_i}{p}(\cdot, \tau), \sum_{j=1}^m a_j M_{z_1}^*\left( \frac{q_j}{p}(\cdot, \tau) \right)  \right \rangle_{\mathcal{K}_{\Theta_{\tau}}} \\
& = \sum_{i,j=1}^m \left \langle a_i \frac{q_i}{p}(\cdot, \tau),   \frac{q_i}{p}(\cdot, \tau) a_j h_{ij}(\tau)  \right \rangle_{\mathcal{K}_{\Theta_{\tau}}}\\
& = \sum_{i,j=1}^m \left \langle a_i,  a_j  h_{ij}(\tau) \right \rangle_{\mathbb{C}} \\
& = \left \langle \vec{a}, H(\tau)\vec{a}  \right \rangle_{\mathbb{C}^m} \\
& = \left \langle M_{\Theta}(\tau) \vec{a}, \vec{a} \right \rangle_{\mathbb{C}^m}, 
\end{aligned}
\]
where we used the definitions of $H$ and $M_{\Theta}$ from \eqref{eqn:H} and \eqref{eqn:M}. This sequence of equalities
proves that $\mathcal{W}(S_{\Theta_{\tau}}) = \mathcal{W}(M_{\Theta}(\tau)).$ Thus, we have 

\[  
\begin{aligned}
\text{Clos} \Big( \text{Conv}\Big( \bigcup_{\tau \in \mathbb{T} \setminus E_{\Theta}} \mathcal{W}(S_{\Theta_{\tau}})\Big)\Big) &=  \text{Clos} \Big( \text{Conv}\Big( \bigcup_{\tau \in \mathbb{T} \setminus E_{\Theta}} \mathcal{W}(M_{\Theta}(\tau))\Big)\Big)\\
& =  \text{Clos} \Big( \text{Conv}\Big( \bigcup_{\tau \in \mathbb{T}} \mathcal{W}(M_{\Theta}(\tau))\Big)\Big) \\
& = \text{Clos}\left( \mathcal{W}( S^1_{\Theta}) \right),
\end{aligned}
\] 
where we used Corollary \ref{thm:nr} and the fact that $M_{\Theta}$ is continuous on $\mathbb{T}$.
\end{proof}

If $\Theta = \frac{\tilde{p}}{p}$ is rational inner of degree $(m,n)$, then there are typically many ways to decompose $\mathcal{K}_{\Theta}$ into shift invariant subspaces $\mathcal{S}_1$ and $\mathcal{S}_2$. Indeed, according to Corollary 13.6 in \cite{k14}, if $\deg p = \deg \tilde{p}$,  there is a unique such decomposition if and only if $\tilde{p}$ and $p$ have $2mn$ common zeros (including intersection multiplicity) on $\mathbb{T}^2.$ 
Nevertheless, Theorem \ref{thm:generalnr} allows us to show that $\mathcal{W}( S^1_{\Theta})$ does not depend on the decomposition chosen.

\begin{corollary} \label{cor:S2} Let $\Theta = \frac{\tilde{p}}{p}$ be rational inner of degree $(m,n)$.  Let 
\[ \mathcal{K}_{\Theta} = \mathcal{S}_1 \oplus \mathcal{S}_2 = \widetilde{\mathcal{S}}_1 \oplus \widetilde{\mathcal{S}}_2\]
where both $\mathcal{S}_j, \widetilde{\mathcal{S}}_j$ are $z_j$-invariant subspaces for $j=1,2$. Then
\[ \text{Clos}  \left( \mathcal{W}\left( P_{\mathcal{S}_2} M_{z_1} |_{\mathcal{S}_2} \right)\right)= \text{Clos}  \left( \mathcal{W}\left(P_{\widetilde{\mathcal{S}}_2}  M_{z_1}|_{\widetilde{\mathcal{S}}_2} \right)\right).\]
\end{corollary}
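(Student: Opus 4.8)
The plan is to apply Theorem \ref{thm:generalnr} separately to each of the two decompositions and then observe that the resulting description of the numerical range makes no reference to the decomposition used. The crucial point is that every ingredient on the right-hand side of \eqref{eqn:onevariable} is built directly from $\Theta = \frac{\tilde{p}}{p}$: the exceptional set $E_{\Theta}$ is determined by $p$ through \eqref{eqn:exceptional}, and for each $\tau \in \mathbb{T} \setminus E_{\Theta}$ the slice function $\Theta_{\tau} = \Theta(\cdot, \tau)$, the one-variable model space $\mathcal{K}_{\Theta_{\tau}}$, and hence the compressed shift $S_{\Theta_{\tau}}$ and its numerical range $\mathcal{W}(S_{\Theta_{\tau}})$, depend only on $\Theta$ and not on any choice of $\mathcal{S}_1$ and $\mathcal{S}_2$.

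Concretely, I would first note that Theorem \ref{thm:generalnr} is stated for an arbitrary $z_2$-invariant subspace $\mathcal{S}_2$ arising from a decomposition as in \eqref{eqn:sub}, so it applies verbatim to both $\mathcal{S}_2$ and $\widetilde{\mathcal{S}}_2$. Applying it to the first decomposition yields
\[ \text{Clos}\left( \mathcal{W}\left(P_{\mathcal{S}_2} M_{z_1}|_{\mathcal{S}_2}\right)\right) = \text{Clos}\Big( \text{Conv}\Big( \bigcup_{\tau \in \mathbb{T} \setminus E_{\Theta}} \mathcal{W}(S_{\Theta_{\tau}})\Big)\Big), \]
while applying it to the second decomposition yields
\[ \text{Clos}\left( \mathcal{W}\left(P_{\widetilde{\mathcal{S}}_2} M_{z_1}|_{\widetilde{\mathcal{S}}_2}\right)\right) = \text{Clos}\Big( \text{Conv}\Big( \bigcup_{\tau \in \mathbb{T} \setminus E_{\Theta}} \mathcal{W}(S_{\Theta_{\tau}})\Big)\Big). \]
Since the right-hand sides of these two identities are literally the same set, the two left-hand sides coincide, which is exactly the claimed equality.

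I do not expect a genuine obstacle here, since the substantive work has already been carried out in Theorem \ref{thm:generalnr}; the corollary is essentially a reading of that theorem applied twice. The one point that deserves explicit emphasis -- and the real reason the statement holds -- is the decomposition-independence of the right-hand side of \eqref{eqn:onevariable}. In particular, Theorem \ref{thm:rational} guarantees that $\dim \mathcal{H}(K_2) = m$ and that each slice $\Theta_{\tau}$ is a degree-$m$ Blaschke product for $\tau \in \mathbb{T} \setminus E_{\Theta}$, independently of the splitting, so the family $\{S_{\Theta_{\tau}}\}$ is intrinsic to $\Theta$. Making this observation precise is all that is required to complete the argument.
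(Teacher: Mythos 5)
Your proposal is correct and is essentially identical to the paper's own proof: the paper also applies Theorem \ref{thm:generalnr} to both decompositions and notes that the right-hand side of \eqref{eqn:onevariable} is the same set in both cases. Your added emphasis on why that right-hand side is decomposition-independent (via $E_{\Theta}$ and the slices $\Theta_{\tau}$ depending only on $\Theta$) is a correct elaboration of the point the paper leaves implicit.
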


\begin{proof} By Theorem \ref{thm:generalnr},
 \[ 
 \text{Clos}  \left( \mathcal{W}\left(P_{\mathcal{S}_2} M_{z_1} |_{\mathcal{S}_2} \right)\right)=
  \text{Clos} \Big( \text{Conv}\Big( \bigcup_{\tau \in \mathbb{T} \setminus E_{\Theta}} \mathcal{W}(S_{\Theta_{\tau}})\Big)\Big) =
  \text{Clos}  \left( \mathcal{W}\left(P_{\widetilde{\mathcal{S}}_2} M_{z_1} |_{\widetilde{\mathcal{S}}_2} \right)\right),
  \]
  as desired. \end{proof}
  
  Theorem \ref{thm:generalnr} is particularly useful because the compressions of the shift on one-variable model spaces are well studied. Specifically, let $B$ be a degree $m$ Blaschke product with zeros $\alpha_1, \dots, \alpha_m$ and let $S_B$ denote the compression of the shift on $\mathcal{K}_B.$ Then, as mentioned in the introduction, one matrix of $S_B$ is given by \eqref{eqn:onevar}. Using this formula, it is easy to deduce that the zeros $\alpha_1, \dots, \alpha_m$ are all in $\mathcal{W}(S_B)$. We will use this to establish the following result:
  
\begin{theorem}\label{thm:numericalradius} Let $\Theta = \frac{\tilde{p}}{p}$ be rational inner of degree $(m,n)$ and let $\mathcal{S}_2$  be as in \eqref{eqn:sub}. Then the numerical radius $w\big( S^1_{\Theta} \big) =1$ if and only if $\Theta$ has a singularity on $\mathbb{T}^2$.
\end{theorem}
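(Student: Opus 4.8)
The plan is to pass to the one-variable slices through Theorem \ref{thm:generalnr} and then settle each implication by tracking the zeros of the slice Blaschke products $\Theta_\tau$. Since $S^1_\Theta$ is a contraction we always have $w(S^1_\Theta)\le 1$, so in both directions it suffices to decide whether the numerical radius actually attains the value $1$. Because $|z|$ is a convex function, taking convex hulls and closures does not change the supremum of $|z|$ over a set; hence Theorem \ref{thm:generalnr} reduces the whole problem to the single identity
\[ w(S^1_\Theta) = \sup_{\tau\in\mathbb{T}\setminus E_\Theta} w\big(S_{\Theta_\tau}\big),\]
and I only need to understand how $w(S_{\Theta_\tau})$ behaves as $\tau$ ranges over $\mathbb{T}\setminus E_\Theta$. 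I will also use the observation, recorded just before the statement, that the zeros of a Blaschke product $B$ lie in $\mathcal{W}(S_B)$, together with the one-variable fact that $\mathcal{W}(S_B)$ is strictly contained in $\mathbb{D}$.

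First I would treat the case where $\Theta$ has no singularity on $\mathbb{T}^2$, which is exactly the case $E_\Theta=\emptyset$. Then $M_\Theta$ is continuous on all of $\mathbb{T}$ by Theorem \ref{thm:unitary}, and for every $\tau\in\mathbb{T}$ the slice $\Theta_\tau$ is a genuine degree-$m$ Blaschke product (Theorem \ref{thm:rational}), so $\mathcal{W}(S_{\Theta_\tau})=\mathcal{W}(M_\Theta(\tau))$ sits strictly inside $\mathbb{D}$ and $w(M_\Theta(\tau))<1$. Since $\tau\mapsto w(M_\Theta(\tau))$ is continuous on the compact circle, it attains a maximum value $r<1$; Corollary \ref{thm:nr} then confines $\text{Clos}(\mathcal{W}(S^1_\Theta))$ to the closed disk of radius $r$, giving $w(S^1_\Theta)\le r<1$.

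For the converse I would assume $\Theta$ has a singularity at some $(\tau_1,\tau_0)\in\mathbb{T}^2$, so that $p(\tau_1,\tau_0)=0$ and $\tau_0\in E_\Theta$. The crux, and what I expect to be the main obstacle, is to show that a zero of $\Theta_\tau$ migrates to the unit circle as $\tau\to\tau_0$. The idea is that $p$ has no zeros on $\mathbb{D}\times\mathbb{T}$, so $p(\cdot,\tau_0)$ is not identically zero yet vanishes at $\tau_1\in\mathbb{T}$; since the coefficients of $p(\cdot,\tau)$ depend continuously on $\tau$, the continuity of polynomial roots (via Hurwitz's theorem) produces a root $\zeta(\tau)$ of $p(\cdot,\tau)$ with $\zeta(\tau)\to\tau_1$. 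For $\tau\in\mathbb{T}\setminus E_\Theta$ this root lies outside $\overline{\mathbb{D}}$ and is a pole of $\Theta_\tau$, so its reflection $\alpha(\tau):=1/\overline{\zeta(\tau)}$ is an actual zero of the Blaschke product $\Theta_\tau$ with $\alpha(\tau)\to 1/\overline{\tau_1}=\tau_1$, whence $|\alpha(\tau)|\to 1$. Because $\alpha(\tau)\in\mathcal{W}(S_{\Theta_\tau})$ we conclude $w(S_{\Theta_\tau})\ge|\alpha(\tau)|\to 1$, so the supremum above equals $1$ and $w(S^1_\Theta)=1$. The delicate part of this last step is the uniform control of the relevant root branch near the singular slice (ensuring the tracked root genuinely escapes to the boundary rather than being lost if the degree of $p(\cdot,\tau)$ drops); once that is in hand, the remaining arguments are continuity together with the one-variable description of $\mathcal{W}(S_B)$.
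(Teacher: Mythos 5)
Your proof is correct, and it follows the same overall route as the paper: reduce to the slices $\Theta_\tau$ via Theorem \ref{thm:generalnr}, use that $\mathcal{W}(S_B)$ lies strictly inside $\mathbb{D}$ for a finite Blaschke product $B$ in one direction, and in the other direction track a migrating zero of $\Theta_\tau$, which lies in $\mathcal{W}(S_{\Theta_\tau})$ by \eqref{eqn:onevar}. Two execution details differ and are worth recording. First, for ``no singularity $\Rightarrow w(S^1_\Theta)<1$'' you use compactness of $\mathbb{T}$ and continuity of $\tau \mapsto w(M_\Theta(\tau))$ (legitimate: the numerical radius is a norm on $m\times m$ matrices, and $M_\Theta$ is continuous by Theorem \ref{thm:unitary}) to get a uniform bound $r<1$; the paper instead assumes $w(S^1_\Theta)=1$ and notes that a modulus-one point of $\mathrm{Clos}(\mathcal{W}(S^1_\Theta))$, being an extreme point of $\overline{\mathbb{D}}$, must already lie in a single $\mathcal{W}(M_\Theta(\tilde\tau))$, forcing $\tilde\tau\in E_\Theta$. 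These are contrapositive formulations of the same fact, and both work. Second, in the converse you follow a root of the denominator $p(\cdot,\tau)$ and reflect it, while the paper follows a root of the numerator $\tilde p(\cdot,\tau)$ directly; this is exactly where your flagged ``delicate part'' dissolves. In your version, Hurwitz's theorem needs only that $p(\cdot,\tau)\to p(\cdot,\tau_0)$ locally uniformly (clear, since the coefficients are polynomials in $\tau$) and that $p(\cdot,\tau_0)\not\equiv 0$, which holds because $p(0,\tau_0)\neq 0$ as $p$ has no zeros on $\mathbb{D}\times\mathbb{T}$; this produces a root of $p(\cdot,\tau)$ in any small disk about $\tau_1$ regardless of how many other roots escape to infinity, so degree drops are harmless. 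The paper sidesteps the question entirely: the leading $z_1$-coefficient of $\tilde p(\cdot,\tau)$ is $\tau^n\overline{p(0,\tau)}$, which never vanishes on $\mathbb{T}$, so plain coefficient-continuity of roots suffices. Finally, your reflection step is sound because for $\tau\notin E_\Theta$ there is no cancellation in $\tilde p(\cdot,\tau)/p(\cdot,\tau)$ (the denominator's roots lie outside $\overline{\mathbb{D}}$, the numerator's inside), so $\Theta_\tau$ is a genuine degree-$m$ Blaschke product whose zeros are precisely the reflections $1/\overline{\zeta}$ of the roots $\zeta$ of $p(\cdot,\tau)$.
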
 
\begin{proof} 
($\Rightarrow$) Assume $w\big( S^1_{\Theta} \big) =1$. Then there exists a sequence $\{\lambda_n\} \subseteq \mathcal{W}( S^1_{\Theta})$ such that $|\lambda_n| \rightarrow 1.$ Since $\{\lambda_n\}$ is bounded, it has a subsequence converging to some $\lambda \in \mathbb{T}$. Thus, $\lambda \in \text{Clos}(\mathcal{W}( S^1_{\Theta})).$ By Corollary \ref{thm:nr}, 
\[ \lambda  \in   \text{Conv} \Big( \ \bigcup_{\tau \in \mathbb{T}} \mathcal{W}\left(M_{\Theta}(\tau) \right) \ \Big).\]
Again by Corollary \ref{thm:nr}, as $S^1_{\Theta}$ is a contraction, every $\alpha \in \bigcup_{\tau \in \mathbb{T}} \mathcal{W}\left(M_{\Theta}(\tau)\right)$ satisfies $|\alpha| \le 1$. Since $|\lambda|=1$, we can conclude that there is some $\tilde{\tau} \in \mathbb{T}$ and some $\tilde{\lambda} \in  \mathcal{W}\left(M_{\Theta}(\tilde{\tau})\right)$ such that $|\tilde{\lambda}|=1.$  

Now by way of contradiction, assume $\Theta$ does not have a singularity at $(\tau_1, \tilde{\tau})$ for every $\tau_1 \in \mathbb{T}.$ Then  $\tilde{\tau} \in \mathbb{T} \setminus E_{\Theta}$ and so by the proof of Theorem \ref{thm:generalnr}, $\mathcal{W}(S_{\Theta_{\tilde{\tau}}}) = \mathcal{W}(M_{\Theta}(\tilde{\tau}))$. Thus $\tilde{\lambda} \in \mathcal{W}(S_{\Theta_{\tilde{\tau}}}).$ This gives a contradiction since the numerical range of a compressed shift on a model space associated to a finite Blaschke product is strictly contained in $\mathbb{D}$. See pp.~$181$ of \cite{gw03} for details. Thus $\Theta$ must have a singularity at $(\tau_1, \tilde{\tau})$ for some $\tau_1 \in \mathbb{T}.$  \\

\noindent ($\Leftarrow$) Since $S_{\Theta}^1$ is a contraction, $w\big( S^1_{\Theta} \big) \le 1.$   Assume $\Theta$ has a singularity at $\tilde{\tau} = (\tilde{\tau}_1, \tilde{\tau}_2) \in \mathbb{T}^2.$ Then as $\Theta = \frac{\tilde{p}}{p}$, we must have $\tilde{p}(\tilde{\tau})=0.$  To prove the desired claim, we will show that $\tilde{\tau}_1 \in \mathcal{W}( S^1_{\Theta})$ and as $|\tilde{\tau}_1|=1$, we have  $w\big( S^1_{\Theta} \big) \ge 1.$
Write
\[ \tilde{p}(z_1, z_2) = \sum_{k=0}^{m} \tilde{p}_k(z_2) z_1^k =  \tilde{p}_m(z_2) \left( z_1^m +  \sum_{k=0}^{m-1} \frac{ \tilde{p}_k(z_2)}{ \tilde{p}_m(z_2)} z_1^k \right),\]
for one-variable polynomials $\tilde{p}_1, \dots, \tilde{p}_m.$
Note that $ \tilde{p}_{m}$ does not vanish on $\mathbb{T}.$ If it did, one could conclude that $p(0,\cdot)$ vanishes on $\mathbb{T}$, a contradiction of the fact that $p$ does not vanish on $\mathbb{D} \times \mathbb{T}.$ 
Now for each $\tau \in \mathbb{T}$, consider the one-variable polynomial 
\[ \tilde{p}(z_1, \tau) =  \tilde{p}_m(\tau) \left( z_1^m +  \sum_{k=0}^{m-1} \frac{ \tilde{p}_k(\tau)}{ \tilde{p}_m(\tau)} z_1^k \right)\]
and factor it as
\[ \tilde{p}(z_1, \tau) =  \tilde{p}_m(\tau) \prod_{k=1}^m\left( z_1- \alpha_k(\tau)\right),\]
where $\alpha_1(\tau), \dots, \alpha_m(\tau)$ are the zeros of $\tilde{p}(\cdot, \tau)$. 
Now we use the fact that the zeros of a polynomial depend continuously on its coefficients, see \cite{us77}.

Fix $\epsilon >0$. Since the coefficients $ \left \{\frac{ \tilde{p}_k(\tau)}{ \tilde{p}_m(\tau)} \right \}$ are continuous on $\mathbb{T}$, there exist $\delta_1, \delta_2 >0$ such that if  $| \tau - \tilde{\tau}_2| < \delta_1$, then 
\[ \left | \frac{ \tilde{p}_k(\tau)}{ \tilde{p}_m(\tau)} - \frac{ \tilde{p}_k(\tilde{\tau}_2)}{ \tilde{p}_m(\tilde{\tau}_2)}\right | < \delta_2 \qquad \text{ for } k=1, \dots, m-1\]
and reordering the $\alpha_k(\tau)$ if necessary
\[ |\alpha_k(\tau) - \alpha_k(\tilde{\tau}_2)| < \epsilon  \qquad \text{ for } k=1, \dots, m.\]
Without loss of generality, we can assume $\alpha_1(\tilde{\tau}_2)=\tilde{\tau}_1.$ Since $\epsilon >0$ was arbitrary and $E_{\Theta}$ is finite, the above arguments shows that 
\[ 
\begin{aligned} \tilde{\tau}_1 & \in \text{Clos} \left\{ \alpha_1(\tau): \tau \in \mathbb{T} \setminus E_{\Theta}\right\}  \\
& \subseteq  \text{Clos} \left( \bigcup_{\tau \in \mathbb{T} \setminus E_{\Theta}} \mathcal{W}(S_{\Theta_{\tau}})\right) \\
& \subseteq  \text{Clos} \left( \mathcal{W}( S^1_{\Theta})\right),
\end{aligned}
\]
where we used Equation \eqref{eqn:onevar} to show that each $\alpha_1(\tau) \in  \mathcal{W}(S_{\Theta_{\tau}})$ and Theorem \ref{thm:generalnr} to conclude the last containment. 
\end{proof}
   
\section{Example: $S^1_{\Theta}$ for Simple Rational Inner Functions} \label{sec:examples}

In this section, we illustrate Theorem \ref{thm:unitary} using a particular class of rational inner functions. Specifically, let $\Theta = \prod_{i=1}^m \theta_i,$ where each $\theta_i$ is a degree  $(1,1)$ rational inner function  with a singularity on $\mathbb{T}^2.$
In what follows, we will decompose $\mathcal{K}_{\Theta}$ into specific $M_{z_1}$- and $M_{z_2}$-invariant subspaces $\mathcal{S}_1$ and $\mathcal{S}_2$ (also called $z_1$- and $z_2$-invariant), find an orthonormal basis of $\mathcal{H}(K_2) := \mathcal{S}_2 \ominus z_2 \mathcal{S}_2$, and use this basis to compute the matrix-valued function $M_{\Theta}$ from Theorem \ref{thm:unitary}.

\subsection{Preliminaries} We first require preliminary information about degree $(1,1)$ rational inner functions with a singularity on $\mathbb{T}^2$ and their associated model spaces. To indicate that these are particularly simple functions, we denote 
them with $\theta$ rather than $\Theta.$ Then for such a $\theta,$ there is a polynomial $p(z) =a + bz_1 + cz_2 + d z_1 z_2$ with no zeros in $\mathbb{D}^2 \cup (\mathbb{T} \times \mathbb{D}) \cup (\mathbb{D} \times \mathbb{T})$ such that 
 \[ \theta(z) = \frac{\tilde{p}(z)}{p(z)} = \frac{ \bar{a}z_1 z_2 + \bar{b}z_2 + \bar{c} z_1 + \bar{d}}{ a + bz_1 + cz_2 + d z_1 z_2 }.\]
 In this situation, it is particularly easy to identify shift-invariant subspaces $\mathcal{S}_1$ and $\mathcal{S}_2$ associated to the two-variable model space $\mathcal{K}_{\theta}.$

\begin{lemma} \label{lem:AD} Let $\theta =\frac{\tilde{p}}{p}$ be a degree $(1,1)$ rational inner function with $p(z) = a + bz_1 + cz_2+d z_1 z_2$. Assume $p$ vanishes at $\tau = (\tau_1, \tau_2) \in \mathbb{T}^2.$ Then $\mathcal{K}_{\theta} = \mathcal{S}_1 \oplus \mathcal{S}_2$, where
\begin{equation} \label{eqn:subspaces} \mathcal{S}_1 = \mathcal{H}\left( \frac{g(z)\overline{g(w)}}{1-z_1\bar{w}_1} \right)  \ \ \text{ and }\ \  \mathcal{S}_2 = \mathcal{H}\left( \frac{f(z) \overline{f(w)}}{1-z_2 \bar{w}_2}\right)\end{equation}
with the functions in the reproducing kernels given by
\[ g(z) =  \frac{\gamma (z_1 - \tau_1)}{p(z)} \ \  \text{ and }  \ \   f(z) =  \frac{\lambda (z_2 - \tau_2)}{p(z)},\] 
for any $\lambda, \gamma$ satisfying $|\lambda|^2 = |\bar{a}c -d \bar{b}|$ and $|\gamma|^2 = |\bar{a}b-d\bar{c}|.$ Moreover, $\mathcal{S}_1$ and $\mathcal{S}_2$ are the only subspaces of $\mathcal{K}_{\theta}$ satisfying $\mathcal{K}_{\theta} =\mathcal{S}_1 \oplus \mathcal{S}_2$ that are respectively $z_1$- and $z_2$-invariant.
\end{lemma}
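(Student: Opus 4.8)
The plan is to reduce the entire statement to the single Agler identity for $\theta$ and then read the subspaces off from it. First I would record that an admissible decomposition exists at all: the canonical construction recalled after \eqref{eqn:sub} (take $\mathcal{S}_1=\mathcal{S}_1^{max}$ and $\mathcal{S}_2=\mathcal{K}_\theta\ominus\mathcal{S}_1^{max}$) supplies one, and by the discussion around \eqref{eqn:ad} \emph{any} decomposition $\mathcal{K}_\theta=\mathcal{S}_1\oplus\mathcal{S}_2$ into $z_1$- and $z_2$-invariant pieces yields Agler kernels $K_1,K_2$ as in \eqref{eqn:kernel}. Since $\deg\theta=(1,1)$, Theorems \ref{thm:dim} and \ref{thm:rational} force $\dim\mathcal{H}(K_1)=\dim\mathcal{H}(K_2)=1$, so $K_1=g\overline{g}$ and $K_2=f\overline{f}$ for single nonzero generators, and the degree bounds of Theorem \ref{thm:dim} give $g=r/p$ with $r=\alpha+\beta z_1$ and $f=q/p$ with $q=\mu+\nu z_2$. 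Clearing denominators in \eqref{eqn:ad} turns everything into the identity of sesquianalytic polynomials
\[ p(z)\overline{p(w)}-\tilde{p}(z)\overline{\tilde{p}(w)}=(1-z_1\bar{w}_1)q(z)\overline{q(w)}+(1-z_2\bar{w}_2)r(z)\overline{r(w)}, \]
valid on all of $\mathbb{D}^2\times\mathbb{D}^2$, so it remains only to identify $q$ and $r$.

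The crucial step is to evaluate this identity at the boundary zero $\tau$. Setting $w=\tau$ makes the left-hand side vanish, since $p(\tau)=0$ and (because $p,\tilde{p}$ share their $\mathbb{T}^2$-zeros) $\tilde{p}(\tau)=0$. Specializing the resulting relation further to $z_2=\tau_2$ annihilates the second summand (its factor $1-z_2\bar{\tau}_2$ vanishes) and collapses the first, as $q$ is independent of $z_1$, to $(1-z_1\bar{\tau}_1)\,|q(\tau)|^2$; since this must vanish for all $z_1$, I conclude $q(\tau)=0$, i.e. $\mu+\nu\tau_2=0$ and $q=\nu(z_2-\tau_2)$. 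Symmetrically, setting $z_1=\tau_1$ forces $r(\tau)=0$ and $r=\beta(z_1-\tau_1)$. This already gives the stated forms of $f$ and $g$ (with $\lambda=\nu$, $\gamma=\beta$), and since the subspaces depend only on $K_1=g\overline{g}$ and $K_2=f\overline{f}$, the unimodular phases of $\nu,\beta$ are irrelevant.

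To pin down the moduli I would restrict to the diagonal $w=z$, obtaining
\[ |p(z)|^2-|\tilde{p}(z)|^2=|\nu|^2(1-|z_1|^2)|z_2-\tau_2|^2+|\beta|^2(1-|z_2|^2)|z_1-\tau_1|^2, \]
and extract two evaluations: at $z=(0,0)$ this gives $|a|^2-|d|^2=|\nu|^2+|\beta|^2$, while matching the coefficient of $|z_1|^2$ along $z_2=0$ gives $|b|^2-|c|^2=|\beta|^2-|\nu|^2$. Solving yields $|\nu|^2=\tfrac12(|a|^2+|c|^2-|b|^2-|d|^2)$ and $|\beta|^2=\tfrac12(|a|^2+|b|^2-|c|^2-|d|^2)$. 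The final, most delicate point is to recognize these as the claimed determinantal quantities, and for this I would again use the boundary zero. Writing $p(\cdot,\zeta)=(a+c\zeta)+(b+d\zeta)z_1$ and invoking that $p$ has no zeros on $\mathbb{D}\times\mathbb{T}$ (Section \ref{sec:rational}), the map $\zeta\mapsto|a+c\zeta|^2-|b+d\zeta|^2$ is nonnegative on $\mathbb{T}$ and vanishes exactly at $\tau_2$; since it equals $(|a|^2+|c|^2-|b|^2-|d|^2)+2\,\mathrm{Re}\big((\bar{a}c-d\bar{b})\zeta\big)$, its minimum over $\mathbb{T}$ being $0$ forces $|a|^2+|c|^2-|b|^2-|d|^2=2|\bar{a}c-d\bar{b}|$, hence $|\nu|^2=|\bar{a}c-d\bar{b}|=|\lambda|^2$; the symmetric argument in the other variable gives $|\beta|^2=|\bar{a}b-d\bar{c}|=|\gamma|^2$.

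Finally, uniqueness requires no extra work: the entire derivation applies verbatim to any admissible pair $\mathcal{S}_1,\mathcal{S}_2$, showing its kernels $K_1,K_2$ must coincide with the ones just computed; since $\mathcal{S}_1=\bigoplus_{k\ge0}z_1^k\mathcal{H}(K_1)$ and $\mathcal{S}_2=\bigoplus_{k\ge0}z_2^k\mathcal{H}(K_2)$ are recovered from these kernels (as in the proof of Lemma \ref{lem:range}), the decomposition is forced. I expect the main obstacle to be the modulus identification, i.e. connecting $\tfrac12(|a|^2+|c|^2-|b|^2-|d|^2)$ to $|\bar{a}c-d\bar{b}|$, which genuinely uses both the location of the zero on $\mathbb{T}^2$ and the zero-free behavior of $p$ off $\mathbb{D}^2$; the remainder is clean once the evaluation-at-$\tau$ trick is in hand.
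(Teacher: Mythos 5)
Your proof is correct, and its skeleton matches the paper's: pass to the Agler identity \eqref{eqn:ad}, use Theorems \ref{thm:dim} and \ref{thm:rational} to see that $\mathcal{H}(K_1)$ and $\mathcal{H}(K_2)$ are one-dimensional with generators $r/p$ and $q/p$ where $r=\alpha+\beta z_1$, $q=\mu+\nu z_2$, clear denominators, and evaluate at $w=\tau$ and then at $z_2=\tau_2$ (resp.\ $z_1=\tau_1$) to force $q(\tau_2)=0=r(\tau_1)$ --- that evaluation-at-$\tau$ step is exactly the paper's. You diverge in two places, both legitimately. For the moduli, the paper stays with the full sesquianalytic identity and equates the coefficients of $1$, $z_1\bar{w}_1$, $z_1$, and $z_2$; the last two give $\bar{a}b-d\bar{c}=-\bar{\tau}_1|G|^2$ and $\bar{a}c-d\bar{b}=-\bar{\tau}_2|F|^2$ (the paper's $F,G$ are your $\nu,\beta$), so the moduli are immediate and, as a bonus, the phases produce the formulas \eqref{eqn:zeros} for $\tau_1,\tau_2$ that the paper reuses in Lemma \ref{lem:bws} and in Sections \ref{Zero} and \ref{sec:boundary}. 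Your diagonal restriction $w=z$ discards that phase information, so you must recover $|\nu|^2=|\bar{a}c-d\bar{b}|$ by the separate minimum argument (nonnegativity of $|a+c\zeta|^2-|b+d\zeta|^2$ on $\mathbb{T}$ from the zero-free set of $p$, plus vanishing at $\tau_2$, forcing $|a|^2+|c|^2-|b|^2-|d|^2=2|\bar{a}c-d\bar{b}|$); this is correct but more roundabout and yields strictly less. For uniqueness, the paper counts common zeros of $p,\tilde{p}$ on $\mathbb{T}^2$ and cites Corollary 13.6 of \cite{k14}, whereas you rerun the whole computation for an arbitrary admissible pair --- valid, since Section \ref{sec:rational} asserts that any decomposition \eqref{eqn:sub} produces Agler kernels via \eqref{eqn:kernel} --- conclude the kernels are forced, and then recover $\mathcal{S}_j=\bigoplus_{k\ge 0}z_j^k\mathcal{H}(K_j)$ by the Wold-type decomposition already used in Lemma \ref{lem:range}. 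That argument is self-contained, avoids the external citation, and is arguably the cleaner route to the ``only'' clause of the lemma.
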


\begin{proof} Define $f$ and $g$ as above.   As mentioned earlier, by  \cite{bsv05,bk13}, there are canonical  subspaces $\mathcal{S}^{max}_1$ and $\mathcal{S}^{min}_2$ with $\mathcal{K}_{\theta}= \mathcal{S}^{max}_1 \oplus \mathcal{S}^{min}_2$ that are respectively $z_1$- and $z_2$-invariant. As they are subspaces of $H^2(\mathbb{D}^2)$, we can write them as  
\[ \mathcal{S}^{max}_1 = \mathcal{H}\left(\frac{K_1(z,w)}{1-z_1 \bar{w}_1}\right) \ \  \text{ and } \ \  \mathcal{S}^{min}_2 = \mathcal{H}\left(\frac{K_2(z,w)}{1-z_2 \bar{w}_2} \right),\] 
for Agler kernels $(K_1, K_2)$ of $\theta$ defined as in \eqref{eqn:kernel}.  Our first goal is to show that $K_1(z,w) = g(z)\overline{g(w)}$ and $K_2(z,w) = f(z)\overline{f(w)}.$ Now, by Theorems \ref{thm:dim} and  \ref{thm:rational}, there are polynomials 
\[ r(z_1) = A + Bz_1 \ \text{ and } \ q(z_2) = C + D z_2 \]
such that $K_1(z,w) = \frac{r(z_1)}{p(z)}\overline{\frac{r(w_1)}{p(w)}}$ and $K_2(z,w) = \frac{q(z_2)}{p(z)}\overline{\frac{q(w_2)}{p(w)}}$. The definition of Agler kernels implies that $(K_1,K_2)$ satisfy the formula
\begin{equation} \label{eqn:AD2} 1 - \theta(z) \overline{\theta(w)}  = (1- z_1 \overline{w_1}) \frac{q(z_2)}{p(z)} \frac{ \overline{q(w_2)}}{\overline{p(w)}}  +  (1- z_2 \overline{w_2})  \frac{r(z_1)}{p(z)} \frac{ \overline{r(w_1)}}{\overline{p(w)}}.\end{equation}
Multiplying through by $p(z) \overline{p(w)}$ and letting $(w_1, w_2) \rightarrow (\tau_1, \tau_2)$ gives
\[ 0 = \overline{q(\tau_2)} (1- z_1 \overline{\tau_1}) q(z_2) +  \overline{r(\tau_1)} (1- z_2 \overline{\tau_2}) r(z_1).   \]
This implies that $q(\tau_2)=0$ and so, $q(z_2) = F (z_2 - \tau_2)$ for some constant $F$. Similarly, $r(z_1) = G (z_1 - \tau_1)$ for some constant $G.$ To show that $K_1$ and $K_2$ have the desired expressions in terms of $g$ and $f$, we just need to show that $|F|^2 = |\lambda|^2$ and $|G|^2 = |\gamma|^2.$
 
Substituting the formulas for $q$ and $r$ into \eqref{eqn:AD2} and multiplying through by $p(z) \overline{p(w)}$ gives
\[  p(z) \overline{p(w)} - \tilde{p}(z) \overline{\tilde{p}(w)}  = (1- z_1 \overline{w_1}) |F|^2 (z_2 - \tau_2) \overline{ (w_2 - \tau_2)} +  (1- z_2 \overline{w_2}) |G|^2 (z_1 - \tau_1)\overline{ (w_1 - \tau_1)}.\]
Recalling that $p(z) = a+ bz_1+ cz_2 + dz_1z_2$ and $\tilde{p}(z) = \bar{a}z_1 z_2 + \bar{b}z_2 + \bar{c}z_1 + \bar{d},$ we can equate the coefficients of the monomials $1, z_1 \bar{w}_1, z_1$ and $z_2$ from both sides of the above equation to conclude:
\[
\begin{aligned}  
|a|^2 - |d|^2 &= |F|^2 +|G|^2\\
|b|^2 - |c|^2 &= -|F|^2 + |G|^2 \\
 \bar{a} b-d \bar{c}  &= -\overline{\tau_1} |G|^2  \\
\bar{a} c-  d \bar{b}&= -\overline{\tau_2} |F|^2.
\end{aligned}
\]
The last  two equations show $|F|^2 = |\lambda|^2$ and $|G|^2 = |\gamma|^2$, implying that $K_1(z,w) = g(z)\overline{g(w)}$ and $K_2(z,w) = f(z)\overline{f(w)}.$
In combination with the first equation, one can also obtain the useful formulas
\begin{equation} \label{eqn:zeros}  \tau_1 = \frac{-2( a\bar{b} - c \bar{d})}{ |a|^2 +|b|^2 - |c|^2 -|d|^2} \ \text{ and } \   \tau_2 = \frac{-2( a\bar{c} - b \bar{d})}{ |a|^2 +|c|^2 - |b|^2 -|d|^2}.\end{equation}
To finish the proof, observe that $p$ and  $\tilde{p}$ have two common zeros (including intersection multiplicity) on $\mathbb{T}^2$. As $\theta$ is a degree $(1,1)$  rational inner function, Corollary 13.6 in \cite{k14} implies that $\theta$ has a unique pair of Agler kernels and hence, a unique pair of decomposing subspaces $\mathcal{S}_1$ and $\mathcal{S}_2$ that are respectively $z_1$- and $z_2$-invariant. This unique pair $\mathcal{S}_1$ and $\mathcal{S}_2$ must then be the subspaces $\mathcal{S}^{max}_1$ and $\mathcal{S}^{min}_2$ found earlier.
\end{proof}

It is worth pointing out that for the function $f$ in Lemma \ref{lem:AD}, we can choose any $\lambda$ satisfying $|\lambda|^2 = |\bar{a}c -d \bar{b}|$. However, in the sequel, we will typically choose the particular $\lambda$ satisfying $\lambda^2 = \bar{a}c-d\bar{b}$. We now obtain additional information about $M_{z_1}^*$ applied to $\theta$ and this particular function $f$ from Lemma \ref{lem:AD}. 

\begin{lemma} \label{lem:bws} Let $\theta =\frac{\tilde{p}}{p}$ be a degree $(1,1)$ rational inner function with $p(z) = a + bz_1 + cz_2+d z_1 z_2$. Assume $p$ vanishes at $\tau= (\tau_1, \tau_2) \in \mathbb{T}^2$ and let $f$ be defined as in Lemma \ref{lem:AD} with $\lambda$ further satisfying $\lambda^2 = \bar{a}c-d\bar{b}$. Then
\[ 
\begin{aligned}
\left( M_{z_1}^* f \right) (z) &= f(z) \left(  -\frac{b +dz_2}{a + cz_2} \right); \\
\left( M_{z_1}^* \theta\right) (z) & = f(z) \lambda \left(  \frac{ z_2 - \tau_2}{a + cz_2}\right).
\end{aligned}
\] 
\end{lemma}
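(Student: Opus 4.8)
The plan is to compute $M_{z_1}^* f$ and $M_{z_1}^* \theta$ directly by exploiting the fact that the backward shift $M_{z_1}^*$ acts on $H^2(\mathbb{D}^2)$ as difference quotients in the $z_1$ variable, and that the target of each computation must land back in a space whose structure is already pinned down by Lemma~\ref{lem:AD}. Concretely, the key mechanical fact I would use is that for $F \in H^2(\mathbb{D}^2)$ with power series expansion in $z_1$ (treating $z_2$ as a parameter), one has
\[
\left( M_{z_1}^* F \right)(z) = \frac{F(z_1, z_2) - F(0, z_2)}{z_1}.
\]
So the first step is simply to write $f(z) = \frac{\lambda(z_2 - \tau_2)}{p(z)} = \frac{\lambda(z_2-\tau_2)}{a + bz_1 + cz_2 + dz_1 z_2}$, evaluate at $z_1 = 0$ to get $f(0,z_2) = \frac{\lambda(z_2-\tau_2)}{a + cz_2}$, subtract, and divide by $z_1$. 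The difference $f(z) - f(0,z_2)$ has the common numerator $\lambda(z_2-\tau_2)$ over a product of denominators, so the subtraction collapses to $\lambda(z_2-\tau_2)\cdot\frac{-(b+dz_2)z_1}{p(z)(a+cz_2)}$; the $z_1$ cancels upon dividing, leaving exactly $f(z)\cdot\left(-\frac{b+dz_2}{a+cz_2}\right)$, which is the first claimed formula.

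For the second formula I would run the same difference-quotient computation on $\theta = \frac{\tilde p}{p}$. Writing $\theta(0,z_2) = \frac{\bar b z_2 + \bar d}{a + cz_2}$ and forming $\frac{\theta(z) - \theta(0,z_2)}{z_1}$, the numerator becomes $\tilde p(z)(a+cz_2) - (\bar b z_2 + \bar d)p(z)$ over $z_1\, p(z)(a+cz_2)$. The main task is to expand this numerator and verify it is divisible by $z_1$ with quotient proportional to $(z_2 - \tau_2)$; I expect the coefficient to assemble into $\lambda^2(z_2-\tau_2)^2$ up to the sign conventions, so that after pairing with $\frac{\lambda(z_2-\tau_2)}{p(z)} = f(z)$ one recovers $f(z)\,\lambda\,\frac{z_2-\tau_2}{a+cz_2}$. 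Here I would lean on the coefficient relations established in the proof of Lemma~\ref{lem:AD}, especially $\lambda^2 = \bar a c - d\bar b$ and the identity $\bar a c - d\bar b = -\overline{\tau_2}|F|^2$ (equivalently the formula \eqref{eqn:zeros} for $\tau_2$), to turn the raw polynomial identity into the stated clean form.

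The main obstacle will be the bookkeeping in the second computation: after the difference quotient, one must algebraically factor the numerator $\tilde p(z)(a+cz_2) - (\bar b z_2+\bar d)p(z)$ and confirm both the $z_1$-divisibility and the emergence of the factor $(z_2-\tau_2)$ together with the exact constant $\lambda$. This is where the normalization $\lambda^2 = \bar a c - d\bar b$ (rather than merely $|\lambda|^2 = |\bar a c - d\bar b|$) is essential, since it fixes the sign/phase that makes the final expression match $f$ precisely rather than only up to modulus. I do not anticipate conceptual difficulty beyond this algebra; the first formula is an easy warm-up, and the second is the same idea with a heavier, but entirely determined, polynomial manipulation that the earlier lemma's coefficient identities are designed to resolve.
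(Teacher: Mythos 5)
Your proposal is correct and follows essentially the same route as the paper's proof: both compute $M_{z_1}^*$ as the difference quotient $\bigl(F(z_1,z_2)-F(0,z_2)\bigr)/z_1$, obtain the first formula by direct cancellation, and obtain the second by expanding the numerator $\tilde p(z)(a+cz_2)-(\bar b z_2+\bar d)p(z)$ into $z_1\bigl[(\bar a c-d\bar b)z_2^2+(|a|^2+|c|^2-|b|^2-|d|^2)z_2+(a\bar c-b\bar d)\bigr]$ and factoring it as $z_1\lambda^2(z_2-\tau_2)^2$ via the coefficient identities in \eqref{eqn:zeros}--\eqref{eqn:tau}. Your observation that the phase normalization $\lambda^2=\bar a c-d\bar b$ (not just its modulus) is what makes the constant come out exactly right is also the same point the paper's argument relies on.
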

\begin{proof} First, simple computations using the definition of $f$ and $p$ give
\[ 
\begin{aligned}
(M_{z_1}^*f)(z) &=  \frac{\lambda(z_2 -\tau_2)}{z_1} \left( \frac{1}{p(z)} - \frac{1}{p(0,z_2)} \right) \\
& =  \lambda (z_2 -\tau_2)\frac{ -b-dz_2}{p(z) p(0,z_2)} \\
& = f(z) \left(  -\frac{b +dz_2}{a + cz_2} \right).
\end{aligned}
\]
Similarly, one can compute
\[ 
(M_{z_1}^*\theta)(z) = \frac{1}{z_1} \left( \frac{\tilde{p}(z)}{p(z)} - \frac{\tilde{p}(0,z_2)}{p(0,z_2)} \right).\]
Using the definitions of $p$ and $\tilde{p}$, one can obtain a common denominator, collect like terms, and cancel the $z_1$ from the denominator to obtain:
\begin{equation} \label{eqn:thetashift} (M_{z_1}^* \theta)(z) = \frac{ (\bar{a}c -d\bar{b})z_2^2 + (|a|^2 +|c|^2 - |b|^2 -|d|^2)z_2 + (a\bar{c} - b \bar{d}) }{p(z)p(0,z_2)}.\end{equation}
Recall that $\tau_2 \in \mathbb{T}$. Then using the formula for $\tau_2$ from \eqref{eqn:zeros}, one can conclude that $a\bar{c} - b \bar{d} \ne 0$ and 
\begin{equation}\label{eqn:tau} \tau_2 = \frac{1}{\overline{\tau_2}} = - \frac{|a|^2 +|c|^2 - |b|^2 -|d|^2}{2(\bar{a} c-\bar{b} d)}  \ \ \text{ and } \ \ \tau_2^2 = \frac{\tau_2}{\overline{\tau_2}} = \frac{a \bar{c} -b\bar{d}}{ \bar{a}c - d \bar{b}}.\end{equation}
Taking the numerator from \eqref{eqn:thetashift} and factoring out $(\bar{a}c -d\bar{b})$ gives

\[
\begin{aligned}
 (\bar{a}c -d\bar{b})z_2^2 &+ (|a|^2 +|c|^2 - |b|^2 -|d|^2)z_2 + (a\bar{c} - b \bar{d}) \\
 &= (\bar{a}c -d\bar{b}) \left( z_2^2 + \frac{|a|^2 +|c|^2 - |b|^2 -|d|^2}{\bar{a}c -d\bar{b}} z_2+ \frac{ a\bar{c} - b \bar{d}}{\bar{a}c -d\bar{b}}\right) \\ 
 & = \lambda^2 \left( z_2^2 - 2 \tau_2 z_2 + \tau_2^2\right).
\end{aligned}
\]
Combining our formulas gives
\[ (M_{z_1}^* \theta)(z)  = \lambda^2 \frac{ (z_2 - \tau_2)^2}{p(z) p(0,z_2)} = \lambda f(z) \left( \frac{z_2 -\tau_2}{a +c z_2}\right),\]
the desired equality.\end{proof}

\subsection{$M_\Theta$ for product $\Theta$} 

Let us now return to the question posed at the beginning of the section. Let $\Theta = \prod_{i=1}^m \theta_i,$ where each $\theta_i$ is a degree  $(1,1)$ rational inner function  with a singularity on $\mathbb{T}^2.$ We can now use Lemma \ref{lem:AD} to  decompose $\mathcal{K}_{\Theta}$ into specific $z_1$- and $z_2$- invariant subspaces $\mathcal{S}_1$ and $\mathcal{S}_2$ and find an orthonormal basis of $\mathcal{H}(K_2):= \mathcal{S}_2 \ominus z_2 \mathcal{S}_2$. Then using Lemma \ref{lem:bws}, we will compute the matrix function $M_{\Theta}$ from Theorem \ref{thm:unitary}.

For each $i$, let $\mathcal{S}_{1,\theta_i}$, $\mathcal{S}_{2, \theta_i}$, and $f_i$ denote the canonical subspaces and reproducing function associated to $\theta_i$ in Lemma \ref{lem:AD}. Then:

\begin{proposition} \label{prop:productform}
Let $\Theta =\prod_{i=1}^m \theta_i,$ where each $\theta_i$ is a degree  $(1,1)$ rational inner function $\frac{\tilde{p}_i}{p_i}$ where $p_i(z) = a_i + b_i z_1 + c_i z_2 + d_i z_1 z_2$ with a singularity at $(\tau_{1,i}, \tau_{2,i}) \in \mathbb{T}^2.$ Define
\begin{equation} \label{eqn:decompK}
\mathcal{S}_1 := \bigoplus_{i=1}^m\Big( \big({\textstyle \prod_{k=1}^{i-1}\theta_k} \big)  \mathcal{S}_{1, \theta_i} \Big) \ \text{ and } \
\mathcal{S}_2 := \bigoplus_{i=1}^m\Big( \big({\textstyle \prod_{k=1}^{i-1}\theta_k} \big) \mathcal{S}_{2, \theta_i} \Big).
\end{equation}
Then $\mathcal{K}_{\Theta} = \mathcal{S}_1 \oplus \mathcal{S}_2$ and $\mathcal{S}_1$, $\mathcal{S}_2$ are respectively $z_1$- and $z_2$-invariant. Furthermore, if $
\mathcal{H}(K_2) = \mathcal{S}_2 \ominus z_2 \mathcal{S}_2$, then the set
\begin{equation} \label{eqn:onbasis} \left \{ f_1, \ \theta_1 f_2, \ \theta_1 \theta_2 f_3, \dots, \big({\textstyle \prod_{k=1}^{m-1}\theta_k} \big) f_m \right \}\end{equation}
is an orthonormal basis for $\mathcal{H}(K_2)$, where each $f_i(z) =  \frac{\lambda_i (z_2 - \tau_{2,i})}{p_i(z)}$ and $\lambda^2_i = \overline{a_i}c_i-d_i\overline{b_i}$. 
\end{proposition}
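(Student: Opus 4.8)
The plan is to reduce everything to the single-factor situation already handled by Lemma \ref{lem:AD} by exploiting how model spaces factor over products of inner functions. First I would establish the orthogonal decomposition $\mathcal{K}_{\Theta} = \bigoplus_{i=1}^m \big(\prod_{k=1}^{i-1}\theta_k\big)\mathcal{K}_{\theta_i}$. This rests on the elementary two-factor identity $\mathcal{K}_{\theta\psi} = \mathcal{K}_{\theta}\oplus\theta\mathcal{K}_{\psi}$, valid for any inner $\theta,\psi$ on the bidisk: since $\theta\psi H^2(\mathbb{D}^2)\subseteq\theta H^2(\mathbb{D}^2)\subseteq H^2(\mathbb{D}^2)$, one has $H^2(\mathbb{D}^2)\ominus\theta\psi H^2(\mathbb{D}^2) = \big(H^2(\mathbb{D}^2)\ominus\theta H^2(\mathbb{D}^2)\big)\oplus\big(\theta H^2(\mathbb{D}^2)\ominus\theta\psi H^2(\mathbb{D}^2)\big)$, and the second summand equals $\theta\mathcal{K}_{\psi}$ because $M_{\theta}$ is an isometry. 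A short induction on $m$ then yields the claimed telescoping decomposition, whose summands are mutually orthogonal by construction.

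Next I would split each factor using Lemma \ref{lem:AD}, writing $\mathcal{K}_{\theta_i} = \mathcal{S}_{1,\theta_i}\oplus\mathcal{S}_{2,\theta_i}$. Because multiplication by the inner function $\prod_{k=1}^{i-1}\theta_k$ is an isometry and hence preserves orthogonality, applying it factorwise and summing over $i$ produces exactly $\mathcal{K}_{\Theta} = \mathcal{S}_1\oplus\mathcal{S}_2$ with $\mathcal{S}_1,\mathcal{S}_2$ as in \eqref{eqn:decompK}; all resulting pieces are mutually orthogonal since distinct $i$ land in orthogonal summands of the previous step, while for fixed $i$ the two pieces are orthogonal by Lemma \ref{lem:AD}. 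The invariance claims are then immediate: $M_{z_1}$ and $M_{z_2}$ commute with each $M_{\theta_k}$, so $z_1\big(\prod_{k<i}\theta_k\big)\mathcal{S}_{1,\theta_i} = \big(\prod_{k<i}\theta_k\big)z_1\mathcal{S}_{1,\theta_i}\subseteq\mathcal{S}_1$ using the $z_1$-invariance of $\mathcal{S}_{1,\theta_i}$, and symmetrically $\mathcal{S}_2$ is $z_2$-invariant.

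Finally I would identify $\mathcal{H}(K_2) = \mathcal{S}_2\ominus z_2\mathcal{S}_2$. By the reasoning of Remark \ref{rem:kernel} applied to each degree-$(1,1)$ factor, $\mathcal{S}_{2,\theta_i} = f_i H^2_2(\mathbb{D})$ with $g\mapsto f_i g$ isometric, so $\mathcal{S}_2 = \bigoplus_{i=1}^m \big(\prod_{k=1}^{i-1}\theta_k\big)f_i H^2_2(\mathbb{D})$ and each map $g\mapsto\big(\prod_{k<i}\theta_k\big)f_i g$ is an isometry of $H^2_2(\mathbb{D})$ onto its piece. Since $M_{z_2}$ carries each piece into itself, taking orthogonal complements distributes over the direct sum, yielding $\mathcal{H}(K_2) = \bigoplus_{i=1}^m\big(\prod_{k<i}\theta_k\big)f_i\big(H^2_2(\mathbb{D})\ominus z_2 H^2_2(\mathbb{D})\big) = \bigoplus_{i=1}^m\mathbb{C}\cdot\big(\prod_{k<i}\theta_k\big)f_i$. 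Each $\big(\prod_{k<i}\theta_k\big)f_i$ is the isometric image of the constant function $1$, hence a unit vector, and these vectors are mutually orthogonal by the previous step; this is precisely the asserted orthonormal basis \eqref{eqn:onbasis}.

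The isometry bookkeeping is routine; the one point deserving genuine care --- the \emph{main obstacle} --- is justifying that the operation $\ominus\, z_2(\cdot)$ commutes with the orthogonal direct sum, i.e.\ that $\mathcal{S}_2\ominus z_2\mathcal{S}_2 = \bigoplus_i(V_i\ominus z_2 V_i)$ for $V_i = \big(\prod_{k<i}\theta_k\big)f_i H^2_2(\mathbb{D})$. Here I would argue carefully using that the $V_i$ are mutually orthogonal and each satisfies $z_2 V_i\subseteq V_i$, so that a vector $\sum_i v_i\in\mathcal{S}_2$ is orthogonal to $z_2\mathcal{S}_2$ exactly when each $v_i\perp z_2 V_i$, which localizes the complement to the individual summands.
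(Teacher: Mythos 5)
Your proposal is correct and follows essentially the same route as the paper's proof: telescope $\mathcal{K}_{\Theta}$ into $\bigoplus_i \big(\prod_{k<i}\theta_k\big)\mathcal{K}_{\theta_i}$, split each factor with Lemma \ref{lem:AD}, get invariance from the fact that the coordinate shifts commute with multiplication by the inner factors, and distribute $\ominus\, z_2(\cdot)$ over the orthogonal, $z_2$-invariant summands to isolate the one-dimensional pieces $\mathbb{C}\cdot\big(\prod_{k<i}\theta_k\big)f_i$. The only cosmetic differences are that the paper verifies the telescoping decomposition by adding reproducing kernels rather than by your induction on $H^2$-complements, and identifies each $\mathcal{S}_{2,\theta_i}\ominus z_2\mathcal{S}_{2,\theta_i}$ directly from the kernel formula in Lemma \ref{lem:AD} rather than via the identification $\mathcal{S}_{2,\theta_i}=f_iH^2_2(\mathbb{D})$.
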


\begin{proof} Observe that 
\begin{equation} \label{eqn:decomposition2} \mathcal{K}_{\Theta} = \mathcal{K}_{\theta_1} \oplus \theta_1 \mathcal{K}_{\theta_2} \oplus (\theta_1 \theta_2) \mathcal{K}_{\theta_3} \oplus \dots \oplus  \big({\textstyle \prod_{k=1}^{m-1}\theta_k} \big) \mathcal{K}_{\theta_m}  = \bigoplus_{i=1}^m   \big({\textstyle \prod_{k=1}^{i-1}\theta_k} \big)\mathcal{K}_{\theta_i}.\end{equation}
This can be seen by observing that the subspaces in \eqref{eqn:decomposition2} are orthogonal to each other and their reproducing kernels add to that of $\mathcal{K}_{\Theta}.$
Now by Lemma \ref{lem:AD}, we can write each $ \mathcal{K}_{\theta_i} =  \mathcal{S}_{1, \theta_i} \oplus \mathcal{S}_{2, \theta_i},$ 
where these subspaces are respectively $z_1$- and $z_2$-invariant. 
Define $\mathcal{S}_1$ and $\mathcal{S}_2$ as in \eqref{eqn:decompK}.
Then, $\mathcal{S}_1$ is an orthogonal sum of $z_1$-invariant subspaces and so is also a $z_1$-invariant subspace. Similarly, $\mathcal{S}_2$ is $z_2$-invariant. By \eqref{eqn:decomposition2}, it immediately follows that $\mathcal{K}_{\Theta} =  \mathcal{S}_1 \oplus \mathcal{S}_2.$ To prove the orthonormal basis result, observe that the components of $\mathcal{S}_2$ in \eqref{eqn:decompK} are pairwise-orthogonal and each is $z_2$-invariant. Thus
\[ \begin{aligned}
\mathcal{S}_2 \ominus z_2 \mathcal{S}_2 & =   \bigoplus_{i=1}^m \Big(  \big({\textstyle \prod_{k=1}^{i-1}\theta_k} \big) \mathcal{S}_{2,\theta_i} \ominus z_2( \big({\textstyle \prod_{k=1}^{i-1}\theta_k} \big)\mathcal{S}_{2,\theta_i}\Big)  \\
& =  \bigoplus_{i=1}^m  \big({\textstyle \prod_{k=1}^{i-1}\theta_k} \big)\Big(  \mathcal{S}_{2,\theta_i} \ominus z_2\mathcal{S}_{2,\theta_i}\Big), 
\end{aligned}
\]
where we used the fact that each ${\textstyle \prod_{k=1}^{i-1}\theta_k}$ is inner.
By the reproducing kernel formula in Lemma \ref{lem:AD}, each singleton set $\{f_i\}$ is an orthonormal basis for $\mathcal{S}_{2,\theta_i} \ominus z_2 \mathcal{S}_{2,\theta_i}$. Thus 
each singleton set $ \left \{  \big({\textstyle \prod_{k=1}^{i-1}\theta_k} \big) f_i \right \} $ is an orthonormal basis for  $ \big({\textstyle \prod_{k=1}^{i-1}\theta_k} \big)\left(\mathcal{S}_{2,\theta_i} \ominus z_2 \mathcal{S}_{2,\theta_i}\right) $. Since the decomposition of $\mathcal{S}_2 \ominus z_2 \mathcal{S}_2 $ into components in the above equation is orthogonal, the set
$ \left \{ f_1, \theta_1 f_2, \dots,  \big({\textstyle \prod_{k=1}^{m-1}\theta_k} \big) f_m \right \}$ gives the desired orthonormal basis.
\end{proof}

Recall that $\deg \Theta = (m,m)$.  By Theorem \ref{thm:unitary},  the operator $S_{\Theta}^1 := P_{\mathcal{S}_2} M_{z_1} |_{\mathcal{S}_2}$ is unitarily equivalent to a $z_2$ matrix-valued Toeplitz operator with $m\times m$ symbol $M_{\Theta}$, whose entries are rational in $\bar{z}_2$ and continuous on $\overline{\mathbb{D}}.$  For this particular $\Theta$ and $\mathcal{S}_2$, we can compute $M_{\Theta}$:

\begin{theorem} \label{thm:nr1} Let $\Theta = \prod_{i=1}^m \theta_i,$ where each $\theta_i$ is a degree  $(1,1)$ rational inner function $\frac{\tilde{p}_i}{p_i}$ where $p_i(z) = a_i + b_i z_1 + c_i z_2 + d_i z_1 z_2$ has a zero at $\tau_i = (\tau_{1,i}, \tau_{2,i}) \in \mathbb{T}^2$. Let $\mathcal{S}_2$ be as in \eqref{eqn:decompK}. Then, the $m \times m$ matrix-valued function $M_{\Theta}$ from Theorem \ref{thm:unitary} is given entry-wise by
\[ M_{\Theta} (z_2)_{ji} =  \left \{ \begin{array}{cc}
\overline{ \lambda_j \left(  \frac{ z_2 - \tau_{2,j}}{a_j + c_jz_2}\right)  \lambda_i \left(  \frac{ z_2 - \tau_{2,i}}{a_i + c_iz_2}\right) \prod_{k=i+1}^{j-1} \left(\frac{\overline{b_k}z_2 + \overline{d_k}}{a_k + c_k z_2}\right)} & \text{ if $j>i$}; \\
& \\
\overline{\left(  -\frac{b_i +d_iz_2}{a_i + c_iz_2} \right) } & \text{ if $j=i$}; \\
&\\ 
0 & \text{ if } j<i,
\end{array}
\right. \]
where each $\lambda_i$ satisfies $\lambda_i^2 = \overline{a_i}c_i-d_i\overline{b_i}$.
\end{theorem}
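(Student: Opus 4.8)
The plan is to obtain $M_{\Theta}$ from its defining relation $M_{\Theta}=H^{*}$ in Theorem \ref{thm:unitary}, where the entries $h_{ij}$ of $H$ are the coefficients in the expansion $M_{z_1}^{*}e_j=\sum_i e_i\,h_{ij}$ and $\{e_i\}=\big\{\big(\prod_{k=1}^{i-1}\theta_k\big)f_i\big\}$ is the orthonormal basis of $\mathcal{H}(K_2)$ from Proposition \ref{prop:productform}. Because $\mathcal{S}_2=\bigoplus_i e_i H_2^2(\mathbb{D})$ with the module inner product of Remark \ref{rem:kernel}, this decomposition is unique; hence it suffices to compute $M_{z_1}^{*}e_j$ and recognize each summand as a single $e_i$ multiplied by a function of $z_2$. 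The desired matrix is then $(M_{\Theta})_{ji}=\overline{h_{ij}}$.

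First I would rewrite the two identities of Lemma \ref{lem:bws} so as to expose the basis structure: since $f_i(0,z_2)=\lambda_i(z_2-\tau_{2,i})/(a_i+c_iz_2)$, that lemma gives
\[
 M_{z_1}^{*}\theta_i=f_i\,f_i(0,\cdot),\qquad M_{z_1}^{*}f_i=f_i\Big(-\tfrac{b_i+d_iz_2}{a_i+c_iz_2}\Big),
\]
to be used together with the slice value $\theta_k(0,z_2)=(\overline{b_k}z_2+\overline{d_k})/(a_k+c_kz_2)$. The decisive move is to iterate the \emph{correct} Leibniz rule for the backward shift. From $M_{z_1}^{*}g=(g-g(0,\cdot))/z_1$ one checks that both $M_{z_1}^{*}(uv)=(M_{z_1}^{*}u)v+u(0,\cdot)M_{z_1}^{*}v$ and $M_{z_1}^{*}(uv)=u\,M_{z_1}^{*}v+(M_{z_1}^{*}u)\,v(0,\cdot)$ hold; I would use the second. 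Writing $e_j=u_1\cdots u_j$ with $u_l=\theta_l$ for $l<j$ and $u_j=f_j$, peeling factors off from the right and inducting yields
\[
 M_{z_1}^{*}e_j=\sum_{i=1}^{j}\Big(\prod_{l<i}\theta_l\Big)\,(M_{z_1}^{*}u_i)\,\Big(\prod_{i<l\le j}u_l(0,\cdot)\Big).
\]
The virtue of this grouping is that the factors preceding the differentiated one remain \emph{full}, assembling precisely into $\prod_{l<i}\theta_l$, while those following it are \emph{evaluated at $z_1=0$}; every term is therefore manifestly a multiple of a basis vector.

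Reading off the summands then completes the computation. The $i=j$ term equals $\big(\prod_{l<j}\theta_l\big)M_{z_1}^{*}f_j=e_j\big(-\tfrac{b_j+d_jz_2}{a_j+c_jz_2}\big)$, which is the diagonal entry; for $i<j$ the term is
\[
 \Big(\prod_{l<i}\theta_l\Big)f_i\cdot f_i(0,\cdot)\Big(\prod_{i<l<j}\theta_l(0,\cdot)\Big)f_j(0,\cdot)=e_i\,h_{ij},\qquad h_{ij}=f_i(0,\cdot)f_j(0,\cdot)\prod_{k=i+1}^{j-1}\theta_k(0,\cdot);
\]
and no term has $i>j$, giving the vanishing below the diagonal. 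Substituting the slice values and conjugating reproduces the stated entries of $M_{\Theta}$ verbatim. The one genuine subtlety---and the step I expect to be the crux---is the choice of product rule: the more familiar expansion (in which the factors \emph{after} the differentiated one stay full) produces cross terms such as $f_i\,\theta_{i+1}\cdots\theta_{j-1}f_j$, which are not basis vectors and would force a delicate telescoping cancellation, whereas the expansion above lands each term on a single $e_i$ with no further work. Finally I would note that the resulting $h_{ij}$ are rational in $z_2$ with no poles in $\overline{\mathbb{D}}$, in accordance with Theorem \ref{thm:unitary}, confirming that they are the genuine symbol entries.
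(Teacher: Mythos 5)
Your proposal is correct, and it arrives at the paper's formula by what is recognizably the same computation, but with a genuinely different (and cleaner) mechanism at the decisive step. Both arguments use the orthonormal basis $e_i = \big(\prod_{k<i}\theta_k\big)f_i$ of Proposition \ref{prop:productform}, Lemma \ref{lem:bws} (your rewriting $M_{z_1}^*\theta_i = f_i\, f_i(0,\cdot)$ is exactly the content of that lemma), the backward-shift product rule \eqref{eqn:bwsform}, and the relation $M_{\Theta} = H^*$ from the proof of Theorem \ref{thm:unitary}. The difference lies in how the coefficients $h_{ij}$ are extracted. The paper works entry by entry: it applies \eqref{eqn:bwsform} once (for the diagonal) or twice (for $i<j$) and then discards the unwanted terms by orthogonality --- terms lying in $\big(\prod_{k<i}\theta_k\big)\theta_i H^2(\mathbb{D}^2)$, or in the model space of $\prod_{k<i}\theta_k$ via Proposition 3.5 of \cite{b13}, are orthogonal to $\big(\prod_{k<i}\theta_k\big)\mathcal{K}_{\theta_i}$; the vanishing below the diagonal comes from the $M_{z_1}^*$-invariance of $\bigoplus_{\ell \le j}\big(\prod_{k<\ell}\theta_k\big)\mathcal{K}_{\theta_\ell}$ together with orthogonality. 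You instead iterate the ``right-peeling'' form of the product rule so that in the resulting closed-form expansion of $M_{z_1}^*e_j$ every summand is already a basis vector times a scalar function; then uniqueness of the orthogonal decomposition $\mathcal{S}_2 = \bigoplus_i e_i H_2^2(\mathbb{D})$ delivers the diagonal entries, the entries with $i<j$, and the zeros with $i>j$ all at once, with no orthogonality arguments and no appeal to \cite{b13}. I verified your induction; it is the commuted form of \eqref{eqn:bwsform} applied to the last factor, and your warning about the other grouping is apt: it produces terms like $f_i\,\theta_{i+1}\cdots\theta_{j-1}f_j$ that are not of the form $e_i h$ and would require cancellation. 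What each route buys: yours is more self-contained and treats the three cases uniformly; the paper's is more local, identifying a single entry without needing the whole expansion to close up on the basis. The one point you should make fully explicit is that uniqueness applies only once each coefficient is known to lie in $H_2^2(\mathbb{D})$: this holds because the only possible poles are at the zeros of $a_k + c_k z_2 = p_k(0,z_2)$, which lie outside $\overline{\mathbb{D}}$ since $p_k$ does not vanish on $\mathbb{D}^2 \cup (\mathbb{D}\times\mathbb{T})$; your closing remark gestures at this, and with it the argument is complete.
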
 

\begin{proof}  By the proof of Theorem \ref{thm:unitary}, we need only show that this $M_{\Theta}$ satisfies the correct formula.  Specifically, let \[ \left \{ f_1, \  \theta_1f_2, \ \theta_1\theta_2 f_3, \dots,  \big({\textstyle \prod_{k=1}^{m-1}\theta_k} \big)  f_m \right\} \] 
denote the orthonormal basis of $\mathcal{H}(K_2)= \mathcal{S}_2 \ominus z_2 \mathcal{S}_2$ from Proposition \ref{prop:productform}.  Then by the proof of Theorem \ref{thm:unitary}, $M_{\Theta} = H^*$ where $H$ is the $m \times m$ matrix of unique functions $h_{ij} \in H^2_2(\mathbb{D})$ satisfying
\[ M_{z_1}^*\left(  \big({\textstyle \prod_{k=1}^{j-1}\theta_k} \big)  f_j \right) = f_1 h_{1j}  + \dots + \big({\textstyle \prod_{k=1}^{m-1}\theta_k} \big) f_m h_{mj}, \quad \text{ for } j=1, \dots, m.\]  
Then to identify each $h_{ij}$ we need only write 
\[  M_{z_1}^*\left(  \big({\textstyle \prod_{k=1}^{j-1}\theta_k} \big)  f_j \right)  = \big({\textstyle \prod_{k=1}^{i-1}\theta_k} \big)  f_i  h +g,\]
where $h \in H_2^2(\mathbb{D})$ and $g \perp \big({\textstyle \prod_{k=1}^{i-1}\theta_k} \big) \mathcal{K}_{\theta_i}.$ Then, we would have $h_{ij} = h.$ 

To begin computing the $h_{ij}$, fix $i,j$ with $i >j$. Observe that the following subspace of $\mathcal{S}_2$
\[   \bigoplus_{\ell=1}^j   \big({\textstyle \prod_{k=1}^{\ell-1}\theta_k} \big)\mathcal{K}_{\theta_{\ell}}\]
is the two-variable model space associated to the inner function ${\textstyle \prod_{k=1}^{j-1}\theta_k}$ and hence, is invariant under $M_{z_1}^*$. Thus if $i >j$, the fact that
\[M_{z_1}^*\left(  \big({\textstyle \prod_{k=1}^{j-1}\theta_k} \big)  f_j \right) \in \bigoplus_{\ell=1}^j   \big({\textstyle \prod_{k=1}^{\ell-1}\theta_k} \big)\mathcal{K}_{\theta_\ell} \perp   \big({\textstyle \prod_{k=1}^{i-1}\theta_k} \big)\mathcal{K}_{\theta_i}\]
 implies that  $h_{ij}\equiv 0.$ 
For the other cases, we will use the identity
\begin{equation} \label{eqn:bwsform} M_{z_1}^* \left( GH \right) = H M_{z_1}^*( G) +G(0, z_2)M_{z_1}^*(H),\end{equation}
for any $G,H \in H^2(\mathbb{D}^2)$ with $GH \in H^2(\mathbb{D}^2)$.  Now fix $i=j$ and observe that by \eqref{eqn:bwsform} and Lemma \ref{lem:bws},
\[ 
\begin{aligned}
M_{z_1}^* \left ( \big({\textstyle \prod_{k=1}^{j-1}\theta_k} \big) f_j \right) &=\big({\textstyle \prod_{k=1}^{j-1}\theta_k} \big) M_{z_1}^*(f_j) + f_j(0,z_2) M_{z_1}^*\big({\textstyle \prod_{k=1}^{j-1}\theta_k} \big)\\
& =\big({\textstyle \prod_{k=1}^{j-1}\theta_k} \big) f_j \left(  -\frac{b_j +d_jz_2}{a_j + c_jz_2} \right)  +  f_j(0,z_2) M_{z_1}^* \big({\textstyle \prod_{k=1}^{j-1}\theta_k} \big).
\end{aligned}
\]
The second term is in the model space associated to ${\textstyle \prod_{k=1}^{j-1}\theta_k}$ and hence, is orthogonal to  $\big({\textstyle \prod_{k=1}^{j-1}\theta_k} \big)\mathcal{K}_{\theta_j}$. This follows from Proposition $3.5$ in \cite{b13}, which show that if $\phi$ is an inner function on $\mathbb{D}^2$, then $\left( M_{z_1}^*\phi \right)H^2_2(\mathbb{D}) \subseteq \mathcal{K}_{\phi}$. Thus, we can conclude that 
\[ h_{jj}(z_2) =    -\frac{b_j +d_jz_2}{a_j + c_jz_2}.\]
Lastly, fix $i,j$ with $i <j.$ Then by applying \eqref{eqn:bwsform} again, we have 
\[ 
\begin{aligned}
M_{z_1}^*\left(\big({\textstyle \prod_{k=1}^{j-1}\theta_k} \big)  f_j \right) & = \big({\textstyle \prod_{k=1}^{i}\theta_k} \big) M_{z_1}^*\left(  \big({\textstyle \prod_{k=i+1}^{j-1}\theta_k} \big)   f_j \right) +  \left( \big({\textstyle \prod_{k=i+1}^{j-1}\theta_k} \big)f_j \right)(0,z_2) M_{z_1}^*\big({\textstyle \prod_{k=1}^{i}\theta_k} \big) \\
&= \big({\textstyle \prod_{k=1}^{i}\theta_k} \big) M_{z_1}^*\left ( \big({\textstyle \prod_{k=i+1}^{j-1}\theta_k} \big) f_j\right) + \left( \big({\textstyle \prod_{k=i+1}^{j-1}\theta_k} \big) f_j \right)(0,z_2) \big({\textstyle \prod_{k=1}^{i-1}\theta_k} \big) M_{z_1}^*(\theta_{i}) \\
& \qquad+  \left( \big({\textstyle \prod_{k=i}^{j-1}\theta_k} \big) f_j\right)(0,z_2)M_{z_1}^*\big({\textstyle \prod_{k=1}^{i-1}\theta_k} \big).
\end{aligned}
\]
Let us consider the terms in the last sum. The first term lies in $\big({\textstyle \prod_{k=1}^{i-1}\theta_k} \big)\theta_i H^2(\mathbb{D}^2)$ and so is orthogonal to $\big({\textstyle \prod_{k=1}^{i-1}\theta_k} \big) \mathcal{K}_{\theta_i}.$ 
Similarly, the third term is in the model space associated to $\big({\textstyle \prod_{k=1}^{i-1}\theta_k} \big)$ by Proposition $3.5$ in \cite{b13} and so is orthogonal to $\big({\textstyle \prod_{k=1}^{i-1}\theta_k} \big) \mathcal{K}_{\theta_i}.$ 
Thus, the second term is the only one that contributes to $h_{ij}$. By Lemma \ref{lem:bws}, we can replace $M_{z_1}^* \theta_i$ in the second term to obtain
\[  \left( \big({\textstyle \prod_{k=i+1}^{j-1}\theta_k}\big) f_j\right)(0,z_2)\lambda_i \left(  \frac{ z_2 - \tau_{2,i}}{a_i + c_iz_2}\right) \big({\textstyle \prod_{k=1}^{i-1}\theta_k} \big) f_i. \]
It follows that
\[ 
\begin{aligned}
h_{ij}(z_2) &=  \left( \big({\textstyle \prod_{k=i+1}^{j-1}\theta_k} \big) f_j \right)(0,z_2)\lambda_i \left(  \frac{ z_2 - \tau_{2,i}}{a_i + c_iz_2}\right) \\
& = \lambda_j \left(  \frac{ z_2 - \tau_{2,j}}{a_j + c_jz_2}\right)  \lambda_i \left(  \frac{ z_2 - \tau_{2,i}}{a_i + c_iz_2}\right) {\prod_{k=i+1}^{j-1} }\left(\frac{\overline{b_k}z_2 + \overline{d_k}}{a_k + c_k z_2}\right),
\end{aligned}
\]
where we used the formulas for each $\theta_k$ and $f_j$. 
Thus, $H$ is defined entry-wise by 
\[ H(z_2)_{ij} =  \left \{ \begin{array}{cc}
\lambda_j \left(  \frac{ z_2 - \tau_{2,j}}{a_j + c_jz_2}\right)  \lambda_i \left(  \frac{ z_2 - \tau_{2,i}}{a_i + c_iz_2}\right) \prod_{k=i+1}^{j-1} \left(\frac{\overline{b_k}z_2 + \overline{d_k}}{a_k + c_k z_2}\right) & \text{ if $i<j$}; \\
& \\
\left(  -\frac{b_i +d_iz_2}{a_i + c_iz_2} \right)  & \text{ if $i=j$}; \\
&\\ 
0 & \text{ if } i>j.
\end{array}
\right. \]
Then the fact that $M_{\Theta} = H^*$ gives the desired formula. \end{proof}

To make this concrete, we  compute several $M_{\Theta}$ using the formula from Theorem \ref{thm:nr1}.

\begin{example} First, let $\Theta$ be the following degree $(2,2)$ rational inner function:
\[ 
\Theta(z) = \theta_1(z) \theta_2(z) = \left( \frac{2z_1z_2-z_1-z_2}{2-z_1-z_2} \right) \left( \frac{3z_1z_2-2z_1-z_2}{3-z_1-2z_2} \right).\]
Then $\tau_{2,1} = \tau_{2,2}=1$ and we can take $\lambda_1 = i \sqrt{2}$ and $\lambda_2 = i \sqrt{6}$. Then, by Theorem \ref{thm:nr1}:
\[ M_{\Theta}(z_2) =
\left[   { \setstretch{3}
 \begin{array}{cc}
\dfrac{1}{2-\bar{z}_2} & 0 \\
\dfrac{ -\sqrt{12}(1-\bar{z}_2)^2}{(2-\bar{z}_2)(3-2\bar{z}_2)} & \dfrac{1}{3-2\bar{z}_2} 
\end{array} }
\right].\]
Thus, $S^1_{\Theta}$ is unitarily equivalent to the matrix-valued Toeplitz operator with this symbol.
\end{example}

\begin{example} Now, let $\Theta$ be the following degree $(3,3)$ rational inner function:
\[\Theta(z) = \theta_1(z) \theta_2(z)\theta_3(z) = \left( \frac{2z_1z_2-z_1-z_2}{2-z_1-z_2} \right) \left( \frac{3z_1z_2-2z_1-z_2}{3-z_1-2z_2} \right) \left(\frac{3z_1z_2-z_1-z_2-1}{3-z_1-z_2-z_1z_2} \right).\]
Then $\tau_{2,1} = \tau_{2,2}=\tau_{2,3}=1$ and we have $\lambda_1 = i \sqrt{2}$, $\lambda_2 = i \sqrt{6}$, and $\lambda_3 = 2i.$ By Theorem \ref{thm:nr1}:
\[ M_{\Theta}(z_2) =
\left[   { \setstretch{3}
 \begin{array}{ccc}
\dfrac{1}{2-\bar{z}_2} & 0 & 0  \\
\dfrac{ -\sqrt{12}(1-\bar{z}_2)^2}{(2-\bar{z}_2)(3-2\bar{z}_2)} & \dfrac{1}{3-2\bar{z}_2} & 0\\
\dfrac{2 \sqrt{2} (1-\bar{z}_2)^2(1+\bar{z}_2)}{(2-\bar{z}_2)(3-\bar{z}_2)(3-2\bar{z}_2)} & \dfrac{-2\sqrt{6} (1-\bar{z}_2)^2}{(3-2\bar{z}_2)(3-\bar{z}_2)}  & \dfrac{1+\bar{z}_2}{3-\bar{z}_2} 
\end{array} }
\right],\]
so  $S^1_{\Theta}$ is unitarily equivalent to the matrix-valued Toeplitz operator with this symbol.

It is worth pointing that out that these $M_{\Theta}$ are lower triangular (rather than upper triangular like \eqref{eqn:onevar}) because in our computations, we ordered our bases in a different way than is typically done in the one-variable situation.
\end{example}

\section{Zero Inclusion Question for the Numerical Range}
\label{Zero}

In this section, we study the question of when zero is in the numerical range associated to a product of two degree $(1,1)$ rational inner functions: Using $\mathcal{S}_2$ as defined in \eqref{eqn:decompK} and recalling that $ S^1_{\Theta} : = P_{\mathcal{S}_2}  M_{z_1}|_{\mathcal{S}_2}$, we are interested in the question of when zero is in $\mathcal{W}\big( S^1_{\Theta}\big)$. 

We begin with some notation. Let $\Theta = \theta_1 \theta_2$, where each $\theta_j$ is a degree  $(1,1)$ rational inner function $\frac{\tilde{p}_j}{p_j}$ and each $p_j(z) = a_j + b_j z_1 + c_j z_2 + d_j z_1 z_2$ has a zero at $\tau_j = (\tau_{1,j}, \tau_{2,j}) \in \mathbb{T}^2$. By Corollary \ref{thm:nr}, 
\[ \text{Clos}\big(\mathcal{W}( S^1_{\Theta})\big) = \text{Conv}\Big( \bigcup_{\tau \in \mathbb{T}} \mathcal{W}(M_{\Theta}(\tau))\Big),\] 
where $M_{\Theta}$ is the $2 \times 2$ matrix-valued function given in Theorem~\ref{thm:nr1}. For our $\Theta=\theta_1\theta_2$, 
\begin{equation}\label{eqn:2by2}
M_\Theta(z_2) =
  \begin{bmatrix}
    -\overline{\left(\frac{b_1 + d_1 z_2}{a_1 + c_1 z_2}\right)} & 0\\
    &\\
    \bar{\lambda}_1 \bar{\lambda}_2 
    \overline{\left({\frac{z_2 - \tau_{2, 1}}{a_1 + c_1 z_2}}\right)} \cdot \overline{\left({\frac{z_2 - \tau_{2,2}}{a_2 + c_2 z_2}}\right)} & -\overline{\left(\frac{b_2 + d_2 z_2}{a_2 + c_2 z_2}\right)}
  \end{bmatrix},
\end{equation}
where $\lambda_j^2= \bar{a}_j c_j - d_j \bar{b}_j$, for $j = 1, 2$. In future computations, we let $f_{j, z_2}$ denote $(j,j)$-entry of $M_\Theta(z_2)$ and let $\beta_j$ denote the center of the circle $\mathcal{C}_j := \{ -\frac{b_j + d_j z}{a_j + c_j z}: z \in \mathbb{T}\}$ for  $j=1,2.$ By Theorem~\ref{thm:numericalradius}, the numerical radius $w( S^1_{\Theta} ) =1$ and so, the entries (eigenvalues) $f_{1, z_2}$ and $f_{2, z_2}$ as well as the entire circles $\mathcal{C}_1$  and $\mathcal{C}_2$ are in $\overline{\mathbb{D}}$.  
For each $z_2 \in \mathbb{T}$, define
\[U_{\alpha_{z_2}} :=
  \begin{bmatrix}
   0 & e^{i \alpha_{z_2} }
\\
1 & 0
  \end{bmatrix},
\]
where $\alpha_{z_2} \in \mathbb{R}$ is chosen so that 
\begin{equation} \label{eqn:pos12} \widehat{M}_{\Theta}(z_2): = U_{\alpha_{z_2}}^* M_\Theta(z_2) U_{\alpha_{z_2}} \end{equation}
has positive $(1, 2)$-entry. Since $U_{\alpha_{z_2}}$ is unitary, $\widehat{M}_{\Theta}(z_2)$ and $M_\Theta(z_2)$ have the same numerical range. We will often apply the Elliptical Range Theorem (see, for example, \cite{ckli}), which says that the numerical range of a $2 \times 2$ upper-triangular matrix
\begin{equation}\label{ERT} A =
  \begin{bmatrix}
   a &  c 
\\
   0& b
  \end{bmatrix}
\end{equation}
is an elliptical disk with foci at $a$ and $b$ and minor axis of length $|c| = \left({\mbox{trace} (A^* A) - |a|^2 - |b|^2}\right)^{1/2}$. 
In particular, the numerical range of $ \widehat{M}_{\Theta}(z_2)$, and hence of $M_{\Theta}(z_2)$, is an elliptical disk with foci
$f_{1,z_2}$ and $f_{2,z_2}$ and minor axis length:
 \begin{equation} \label{eqn:minor_axis} m_{z_2} := |\lambda_1\,  \lambda_2| \left|\frac{z_2 - \tau_{2, 1}}{a_1 + c_1 z_2}\right| \cdot \left|\frac{z_2 - \tau_{2, 2}}{a_2 + c_2z_2}\right|.\end{equation}

\subsection{When is $0$ in the numerical range?} Now let us consider the zero inclusion question.

\begin{proposition}\label{lem:general} Let $\Theta = \theta_1 \theta_2,$ where each $\theta_j$ is a degree  $(1,1)$ rational inner function $\frac{\tilde{p}_j}{p_j}$ where $p_j(z) = a_j + b_j z_1 + c_j z_2 + d_j z_1 z_2$ has a zero at $\tau_j = (\tau_{1,j}, \tau_{2,j}) \in \mathbb{T}^2$. If there exists $\gamma \in \mathbb{T} \setminus \{\tau_{2, j}\}_{j = 1, 2}$ such that either
 \begin{equation}\label{item:equal}  
|f_{1, \gamma}| + |f_{2, \gamma}| < |1 - \bar{f}_{1, \gamma} f_{2, \gamma}|
\end{equation}
or  
\begin{equation}\label{item:notequal} 
|\bar{\beta}_j - f_{j, \gamma}|  > |\bar{\beta}_j| \quad \text{for  $j=1$ or $j=2$,} \end{equation} 
then $0 \in \mathcal{W}( S^1_{\Theta})^0$. Furthermore, if $a_j \bar{b}_j - c_j \bar{d}_j \in \mathbb{R}$, then  \eqref{item:notequal} holds if and only if
\[|\bar{a}_j d_j - b_j \bar{c}_j| > |a_j \bar{b}_j - c_j \bar{d}_j|.\] 
 \end{proposition}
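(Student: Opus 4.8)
The plan is to exploit Corollary~\ref{thm:nr}, which writes $\mathrm{Clos}\big(\mathcal{W}(S^1_\Theta)\big)$ as the convex hull of the elliptical disks $\mathcal{W}(M_\Theta(\tau))$, $\tau\in\mathbb{T}$. Since $\mathcal{W}(S^1_\Theta)$ is convex, its interior coincides with the interior of its closure, so to place $0$ in $\mathcal{W}(S^1_\Theta)^0$ it suffices to exhibit a convex subset of $\mathrm{Clos}\big(\mathcal{W}(S^1_\Theta)\big)$ containing $0$ in its interior. By the Elliptical Range Theorem each $\mathcal{W}(M_\Theta(\tau))$ is the elliptical disk with foci $f_{1,\tau},f_{2,\tau}$ and minor axis $m_\tau$ from \eqref{eqn:minor_axis}, and $0$ lies in its interior exactly when the sum of the focal distances is less than the major axis, i.e. $|f_{1,\tau}|+|f_{2,\tau}|<\sqrt{|f_{1,\tau}-f_{2,\tau}|^2+m_\tau^2}$. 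Each of the two hypotheses will be used to force one of two kinds of subset to contain $0$ in its interior.

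For \eqref{item:equal} I would take the single disk $\mathcal{W}(M_\Theta(\gamma))$. The key input is the identity $m_\gamma^2=(1-|f_{1,\gamma}|^2)(1-|f_{2,\gamma}|^2)$: by the proof of Theorem~\ref{thm:generalnr}, $\mathcal{W}(M_\Theta(\gamma))=\mathcal{W}(S_{\Theta_\gamma})$ where $\Theta_\gamma$ is the degree-$2$ Blaschke product whose zeros are $f_{1,\gamma},f_{2,\gamma}$, and comparison with the matrix \eqref{eqn:onevar} shows the minor axis of that disk is $(1-|f_{1,\gamma}|^2)^{1/2}(1-|f_{2,\gamma}|^2)^{1/2}$. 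A direct expansion then gives $|f_{1,\gamma}-f_{2,\gamma}|^2+m_\gamma^2=|1-\bar f_{1,\gamma}f_{2,\gamma}|^2$, so the interior condition above becomes precisely \eqref{item:equal}. Hence \eqref{item:equal} puts $0$ in the interior of $\mathcal{W}(M_\Theta(\gamma))\subseteq\mathrm{Clos}\big(\mathcal{W}(S^1_\Theta)\big)$, and therefore in $\mathcal{W}(S^1_\Theta)^0$.

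For \eqref{item:notequal} I would instead use a whole circle of foci. As $\tau$ runs over $\mathbb{T}$ the diagonal entry $f_{j,\tau}=-\overline{(b_j+d_j\tau)/(a_j+c_j\tau)}$ traces the reflection of $\mathcal{C}_j$, a circle with center $\bar\beta_j$ and the same radius $R_j$ as $\mathcal{C}_j$; in particular $|\bar\beta_j-f_{j,\gamma}|=R_j$, so \eqref{item:notequal} is exactly the statement $R_j>|\beta_j|$, i.e. that $0$ lies in the open disk $D_j$ bounded by this circle. Since each $f_{j,\tau}$ is a focus of $\mathcal{W}(M_\Theta(\tau))$, it lies in $\mathrm{Clos}\big(\mathcal{W}(S^1_\Theta)\big)$; the whole circle therefore lies in this convex set, and so does the closed disk it bounds. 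Thus \eqref{item:notequal} forces $0\in D_j\subseteq\mathcal{W}(S^1_\Theta)^0$.

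For the final equivalence I would first compute $\mathcal{C}_j$ explicitly: inverting $w=-(b+dz)/(a+cz)$ and imposing $|z|=1$ yields the circle $(|a|^2-|c|^2)|w|^2+2\,\mathrm{Re}\big((a\bar b-c\bar d)w\big)+(|b|^2-|d|^2)=0$ (writing $a=a_j$, etc.), whence $R_j^2-|\beta_j|^2=-(|b|^2-|d|^2)/(|a|^2-|c|^2)$ and so $R_j>|\beta_j|\iff(|a|^2-|c|^2)(|b|^2-|d|^2)<0$. It remains to match this with $|\bar a d-b\bar c|>|a\bar b-c\bar d|$, and here I expect the real work. Expanding gives $|\bar a d-b\bar c|^2-|a\bar b-c\bar d|^2=-(|a|^2-|c|^2)(|b|^2-|d|^2)+4\,\mathrm{Im}(a\bar c)\,\mathrm{Im}(b\bar d)$; the cross term $4\,\mathrm{Im}(a\bar c)\mathrm{Im}(b\bar d)$ does \emph{not} vanish in general, so the naive determinant identity is not enough. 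The plan is to control its sign using the degree-$(1,1)$ rational inner relations behind Lemma~\ref{lem:AD} — equivalently the two modulus identities $|a\bar b-c\bar d|=\tfrac12(|a|^2+|b|^2-|c|^2-|d|^2)$ and $|a\bar c-b\bar d|=\tfrac12(|a|^2+|c|^2-|b|^2-|d|^2)$, which encode that $p$ is stable and vanishes on $\mathbb{T}^2$ — together with the hypothesis $a\bar b-c\bar d\in\mathbb{R}$ (which in particular makes $\beta_j$ real, so that $R_j>|\beta_j|$ can be read off from the two real points where $\mathcal{C}_j$ meets $\mathbb{R}$, namely the roots of $(|a|^2-|c|^2)w^2+2(a\bar b-c\bar d)w+(|b|^2-|d|^2)=0$). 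Substituting the two modulus identities to eliminate the phase-dependent quantities, and thereby showing the cross term never reverses the sign dictated by $(|a|^2-|c|^2)(|b|^2-|d|^2)$, is the main obstacle and the one place where the full strength of rational innerness is needed.
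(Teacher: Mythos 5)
Your proofs of the two sufficient conditions are correct and are essentially the paper's own argument: for \eqref{item:equal} the paper also works inside the single elliptical disk $\mathcal{W}(M_\Theta(\gamma))$ and needs the identity $m_\gamma^2=(1-|f_{1,\gamma}|^2)(1-|f_{2,\gamma}|^2)$, which it proves by direct computation from \eqref{eqn:tau}, whereas you obtain it by identifying $\mathcal{W}(M_\Theta(\gamma))$ with $\mathcal{W}(S_{\Theta_\gamma})$ (legitimate, since $\gamma\notin E_\Theta=\{\tau_{2,1},\tau_{2,2}\}$) and reading the minor axis off \eqref{eqn:onevar}; and for \eqref{item:notequal} both you and the paper use that the foci $f_{j,\tau}$ sweep a circle centered at $\bar{\beta}_j$ whose closed convex hull lies in $\text{Clos}(\mathcal{W}(S^1_\Theta))$ and contains $0$ in its interior.

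The genuine gap is the ``furthermore'' equivalence, which you reduce correctly but never prove. Your reduction \eqref{item:notequal} $\iff (|a_j|^2-|c_j|^2)(|b_j|^2-|d_j|^2)<0$ is right, and so is your expansion showing the mismatch with $|\bar{a}_jd_j-b_j\bar{c}_j|>|a_j\bar{b}_j-c_j\bar{d}_j|$ is exactly the cross term $4\Im(a_j\bar{c}_j)\Im(b_j\bar{d}_j)$. But your plan to kill that term using rational innerness cannot succeed, because the claimed equivalence is in fact false. The paper's proof hides the issue in the unjustified radius formula
\[ \left|\frac{|d_j|^2-|b_j|^2}{|a_j|^2-|c_j|^2}+\frac{(a_j\bar{b}_j-c_j\bar{d}_j)^2}{(|a_j|^2-|c_j|^2)^2}\right|^{1/2}=\frac{|a_j\bar{d}_j-\bar{b}_jc_j|}{|a_j|^2-|c_j|^2}, \]
which, after squaring and clearing denominators, is precisely the assertion $\Im(a_j\bar{c}_j)\Im(b_j\bar{d}_j)=0$, i.e.\ the vanishing of your cross term; no argument is given for it and it fails in general. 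Concretely, let $\theta_0=\frac{2z_1z_2-z_1-z_2}{2-z_1-z_2}$, $\phi(z_2)=\frac{i/2-z_2}{1+(i/2)z_2}$, and $\theta_1(z)=\theta_0(z_1,\phi(z_2))$. Then $\theta_1=\tilde{p}_1/p_1$ with
\[ p_1(z)=i\left(\left(2-\tfrac{i}{2}\right)-z_1+(1+i)z_2-\tfrac{i}{2}z_1z_2\right), \]
a degree-$(1,1)$ rational inner function with a zero at $(1,\tfrac{-3+4i}{5})\in\mathbb{T}^2$, and its coefficients $a_1=\tfrac12+2i$, $b_1=-i$, $c_1=-1+i$, $d_1=\tfrac12$ satisfy $a_1\bar{b}_1-c_1\bar{d}_1=-\tfrac32\in\mathbb{R}$. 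Here the foci $f_{1,\tau}$ lie on the circle of center $\tfrac23$ and radius $\tfrac13$, so \eqref{item:notequal} fails for $j=1$; yet
\[ |\bar{a}_1d_1-b_1\bar{c}_1|=\left|\tfrac54-2i\right|=\tfrac{\sqrt{89}}{4}>\tfrac32=|a_1\bar{b}_1-c_1\bar{d}_1|, \]
because the cross term equals $4\cdot\tfrac54=5\neq0$. So your instinct that the cross term is the crux was exactly right; what neither you nor the paper supplies is a proof, and none exists. The criterion that your computation (and the paper's own circle equation) actually establishes is $(|a_j|^2-|c_j|^2)(|b_j|^2-|d_j|^2)<0$, equivalently $|d_j|>|b_j|$ (since stability forces $|a_j|>|c_j|$); that, rather than the determinant-style inequality in the statement, is the correct form of the ``furthermore'' clause.
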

 
 \begin{proof}  We first perform a general computation for any $z_2 \in  \mathbb{T}$. By the discussions preceding \eqref{eqn:minor_axis}, the numerical range of $M_{\Theta}(z_2)$ is an elliptical disk with foci $f_{1,z_2}$ and $f_{2,z_2}$ and minor axis $m_{z_2}$ given in \eqref{eqn:minor_axis}.
 We will show that  $m_{z_2}$ also satisfies
 \begin{equation} \label{eqn:minor_axis2} m_{z_2} = \sqrt{1 - |f_{1, z_2}|^2}\sqrt{1 - |f_{2, z_2}|^2}.
 \end{equation}
To this end, observe that
\[
{1 - |f_{j, z_2}|^2}  =  \frac{|a_j|^2 + |c_j|^2 - |b_j|^2 - |d_j|^2  + (\overline{a}_j c_j - \overline{b}_j d_j) z_2 + (a_j \overline{c}_j - b_j \overline{d}_j) \overline{z}_2 }{|a_j + c_j z_2|^2}.\] 
From \eqref{eqn:tau}, we know that each $\tau_{2,j} = \frac{1}{\overline{\tau_{2,j}}} = - \frac{|a_j|^2 +|c_j|^2 - |b_j|^2 -|d_j|^2}{2(\bar{a}_j c_j-\bar{b}_j d_j)}$. This implies that
\[{1 - |f_{j, z_2}|^2}  = (|a_j|^2 + |c_j|^2 - |b_j|^2 - |d_j|^2) \frac{1 -  \frac{\bar{\tau}_{2, j}}{2} z_2 - \frac{{\tau}_{2, j}}{2} \bar{z}_2}{|a_j + c_j z_2|^2}.
\]
Since both $\tau_{2, j}\in \mathbb{T}$ and $z_2 \in \mathbb{T}$, and as $|f_{j,z_2}| \le 1$, we have
\[ \begin{aligned}
1 - |f_{j, z_2}|^2 & =  \frac{(\bar{a}_j c_j-\bar{b}_j d_j) \bar{z}_2 (z_2 - \tau_{2, j})^2}{|a_j + c_j z_2|^2} 
& = \frac{|\bar{a}_j c_j-\bar{b}_j d_j| \, |z - \tau_{2, j}|^2}{{|a_j + c_j z_2|^2}} =
\frac{|\lambda_j|^2 \, |z - \tau_{2, j}|^2}{{|a_j + c_j z_2|^2}},
\end{aligned}
\]
where $\lambda_j$ is defined as in Theorem~\ref{thm:nr1}. This proves \eqref{eqn:minor_axis2}. Then a simple computation using the definition of an ellipse
shows that the ellipse bounding $M_{\Theta}(z_2)$ 
has major axis given by
\begin{equation} \label{eqn:major} M_{z_2} := |1 - \overline{f}_{1, z_2} f_{2, z_2}|.\end{equation}

Now, to establish the first claim, assume there is some $\gamma  \in \mathbb{T} \setminus \{\tau_{2, j}\}_{j = 1, 2}$ satisfying \eqref{item:equal}. One can see that the ellipse bounding $\mathcal{W}( M_{\Theta}(z_2))$ is non-degenerate because \eqref{eqn:minor_axis} implies $m_{\gamma} \ne 0.$  Then combining condition~\eqref{item:equal} with the formula for the major axis \eqref{eqn:major} immediately gives $0 \in \mathcal{W}(M_\Theta(\gamma))^0 \subseteq \mathcal{W}( S^1_{\Theta})^0$.
  
To establish the second claim, assume there is some $\gamma  \in \mathbb{T} \setminus \{\tau_{2, j}\}_{j = 1, 2}$ such that a focus $f_{j,\gamma}$ satisfies  \eqref{item:notequal}.  Then, since $\bar{\beta}_j$ is the center of the circle on which the $f_{j,z_2}$ lie, $|\bar{\beta}_j - f_{j, z_2}| > |\bar{\beta}_j|$ for all $z_2 \in \mathbb{T}$.   Thus, the convex hull of the foci contains zero and since each $|\bar{\beta_j} - f_{j,z_2}| > 0$, we know that zero lies in $\mathcal{W}( S^1_{\Theta})^0$.

Now suppose that $a_j \bar{b}_j - c_j \bar{d}_j \in \mathbb{R}$ and consider the circle $\{ -\frac{b_j + d_j z}{a_j + c_j z}: z \in \mathbb{T}\}$ with center $\beta_j$.  If $z \in \mathbb{T}$ and $w = -\frac{b_j + d_j z}{a_j + c_j z}$, then a computation gives
 \[z = - \frac{b_j + a_j w}{d_j + c_j w}.\]
 Since $|z|^2 = 1$, we have
 \[1 =  - \frac{b_j + a_j w}{d_j + c_j w} \cdot \overline{- \frac{b_j + a_j w}{d_j + c_j w}},\] 
 which implies
   \[(|a_j|^2 - |c_j|^2)|w|^2 + 2(a_j \bar{b}_j - c_j \bar{d}_j) \Re w  + (|b_j|^2 - |d_j|^2) = 0.\]
 Writing $w = x + i y$, completing the square and computing, we see that the center $\beta_j$ satisfies
 \[\beta_j = -\frac{a_j \bar{b}_j - c_j \bar{d}_j}{|a_j|^2 - |c_j|^2},\] so $\beta_j = \bar{\beta}_j$ by our assumption that $a_j \bar{b}_j - c_j \bar{d}_j$ is real. The radius of  the circle is
 \[\left|\frac{|d_j|^2 - |b_j|^2}{|a_j|^2 - |c_j|^2} + \frac{(a_j \bar{b}_j - c_j \bar{d}_j)^2}{(|a_j|^2 - |c_j|^2)^2}\right|^{1/2} = \frac{|a_j \bar{d}_j - \bar{b}_j c_j|}{|a_j|^2 - |c_j|^2},\] where we used the fact that our assumptions imply $|a_j| \ne |c_j|$. Thus, condition~\eqref{item:notequal} holds if and only if
 
 \[|a_j \bar{d}_j - \bar{b}_j c_j| > |a_j \bar{b}_j - c_j \bar{d}_j|,\]
 as desired.
 \end{proof}
 
\begin{remark} In the first part of Proposition~\ref{lem:general}, when zero lies in the interior of a single ellipse, we can say more if the foci $f_{1, \gamma}$ and $f_{2, \gamma}$ lie on a line through the origin.  First, if the line segment joining the foci contains the origin in its interior, then condition~\eqref{item:equal} implies that the ellipse is nondegenerate and zero immediately lies in the interior of the ellipse. A similar argument can be made if one or both of the foci is zero.

If the foci lie on a line that passes through the origin and are in the same quadrant, we can write $f_{1, \gamma} = r_1 e^{i \phi}$ and $f_{2, \gamma} =   r_2 e^{i \phi}$ with $r_1, r_2 > 0$. Since the numerical radius is at most $1$, we know that $r_j \le 1$ for $j = 1, 2$. If either $r_1 = 1$ or $r_2 = 1$, then condition \eqref{item:equal} cannot hold.  Thus $0 \le r_j < 1$ for $j = 1, 2$ and condition \eqref{item:equal} holds if and only if
 $r_1 + r_2 < 1 - r_1 r_2$, which happens precisely when 
 $r_1 < \frac{1 - r_2}{1 + r_2}.$
 \end{remark}

Before proceeding further, we require the following lemma:

\begin{proposition}\label{prop:elementary} Let $\Theta = \frac{\tilde{p}}{p}$ be a rational inner function on $\mathbb{D}^2$,  where $p(z) = a + bz_1 + cz_2$ is a polynomial with a zero on $\mathbb{T}^2$. Then $|a| = |b| + |c|$. \end{proposition}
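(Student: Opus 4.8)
The plan is to prove the two inequalities $|a| \le |b| + |c|$ and $|a| \ge |b| + |c|$ separately: the first will come from the zero on $\mathbb{T}^2$, and the second from the absence of zeros off the torus. For the upper bound I would simply evaluate at the given zero: if $p$ vanishes at $\tau = (\tau_1, \tau_2) \in \mathbb{T}^2$, then $a = -(b\tau_1 + c\tau_2)$, so the triangle inequality gives $|a| \le |b|\,|\tau_1| + |c|\,|\tau_2| = |b| + |c|$. (Equality here is equivalent to $b\tau_1$ and $c\tau_2$ being aligned, which is exactly the statement the remaining work must force.)

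For the lower bound I would invoke the fact, recorded at the start of Section \ref{sec:rational}, that the denominator of a rational inner function has no zeros on $(\mathbb{D} \times \mathbb{T}) \cup (\mathbb{T} \times \mathbb{D})$. Fixing $z_2 \in \mathbb{T}$, the slice $p(\cdot, z_2) = b z_1 + (a + c z_2)$ is affine in $z_1$ with unique root $-(a + c z_2)/b$ when $b \ne 0$; requiring this root to lie outside $\mathbb{D}$ for every $z_2 \in \mathbb{T}$ gives $|a + c z_2| \ge |b|$ for all $z_2 \in \mathbb{T}$. Minimizing $|a + c z_2|$ over the circle $\{a + c z_2 : z_2 \in \mathbb{T}\}$, whose nearest point to the origin is at distance $\big||a| - |c|\big|$, yields $\big||a| - |c|\big| \ge |b|$; the case $b = 0$ makes this automatic. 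Slicing instead in $z_2$ and using the absence of zeros on $\mathbb{T} \times \mathbb{D}$ gives, symmetrically, $\big||a| - |b|\big| \ge |c|$.

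I would then combine these algebraically. Squaring both (legitimate, since all quantities are nonnegative) gives $(|a| - |c|)^2 \ge |b|^2$ and $(|a| - |b|)^2 \ge |c|^2$; adding them, the $|b|^2$ and $|c|^2$ terms cancel and one is left with $2|a|\big(|a| - |b| - |c|\big) \ge 0$. Since $a \ne 0$ — otherwise $p(0,0) = 0$ would be a zero of $p$ in $\mathbb{D}^2$, contradicting that $p$ has no zeros there — I may divide by $2|a| > 0$ to conclude $|a| \ge |b| + |c|$. Together with the upper bound, this forces $|a| = |b| + |c|$.

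The computations are all elementary, so the only real care lies in the translation step: verifying that ``no zeros on $\mathbb{D} \times \mathbb{T}$'' is exactly the scalar condition $|a + c z_2| \ge |b|$ for $z_2 \in \mathbb{T}$, that its minimum over $\mathbb{T}$ is $\big||a| - |c|\big|$, and that the degenerate possibilities $b = 0$ or $c = 0$ (each of which makes the corresponding inequality trivial) are covered. This is the step I would treat most carefully, though I do not expect it to present a genuine obstacle.
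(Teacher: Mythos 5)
Your proof is correct, but the lower bound travels a genuinely different route from the paper's. The upper bound is identical in both: evaluate $p$ at its zero on $\mathbb{T}^2$ and apply the triangle inequality. For $|a| \ge |b| + |c|$, the paper argues directly and elementarily from the absence of zeros in the open bidisk: if $|a| < |b| + |c|$, one may pick $z_1, z_2 \in \mathbb{D}$ with phases making $bz_1$ and $cz_2$ anti-parallel to $a$ and with moduli chosen so that $a + bz_1 + cz_2 = 0$, contradicting the holomorphy of $\Theta$ on $\mathbb{D}^2$. You instead appeal to the stronger fact, recorded at the start of Section \ref{sec:rational} via Rudin's Theorem 4.9.1, that $p$ has no zeros on $(\mathbb{D}\times\mathbb{T}) \cup (\mathbb{T}\times\mathbb{D})$; slicing in each variable gives $\big||a|-|c|\big| \ge |b|$ and $\big||a|-|b|\big| \ge |c|$, and your squaring-and-adding step, together with $a = p(0,0) \ne 0$, recovers the bound. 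Your handling of the degenerate cases ($b=0$ or $c=0$), the minimization over the circle $\{a+cz_2 : z_2 \in \mathbb{T}\}$, and the nonvanishing of $a$ are all correct, so there is no gap. The trade-off: the paper's argument is self-contained, needing only that $p$ does not vanish on $\mathbb{D}^2$ (immediate from holomorphy), while yours imports a boundary-faces fact that itself rests on Rudin's theorem and on $\tilde{p}, p$ sharing no common factors (so it is tied to the canonical representation, which the paper's conventions do guarantee here); in exchange you obtain the sharper intermediate slice inequalities, which quantify how far each one-variable slice stays from vanishing.
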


\begin{proof} 
Since $\Theta$ is holomorphic, the polynomial $p$ does not vanish inside $\mathbb{D}^2$. If $|a| < |b| + |c|$, we could choose $z_1$ and $z_2$ to make $p$ vanish in $\mathbb{D}^2$, so this is impossible. Thus, we know that $|a| \ge |b| + |c|$. But $p$ has a zero $(\tau_1, \tau_2)$ on $\mathbb{T}$. Thus, $a = -b \tau_1 - c \tau_2$ and so $|a| \le |b| + |c|$. Combining these two inequalities, we obtain $|a| = |b| + |c|$.
\end{proof}

In the following proposition, we restrict to the situation where  $\Theta = \theta_1 \theta_2$ and each $\theta_j=\frac{\tilde{p}_j}{p_j}$  with $p_j(z) = a_j + b_j z_1 + c_j z_2$. We further require that $b_j <0$ and $a_j, c_j >0$.  Given these assumptions, one can divide through by $|b_j|$ and automatically assume $b_j =-1.$ 

We can now answer the zero inclusion question using the coefficients of the polynomials defining $\Theta$ as follows:
 
\begin{proposition}\label{lem:positive_eigenvalues} Let $\Theta = \theta_1 \theta_2,$ where each $\theta_j$ is a degree  $(1,1)$ rational inner function $\frac{\tilde{p}_j}{p_j}$ where $p_j(z) = a_j - z_1 + c_j z_2$ has a zero at $\tau_j = (\tau_{1,j}, \tau_{2,j}) \in \mathbb{T}^2$ and $a_j, c_j >0.$ Then

\begin{enumerate}
\item[$\bullet$] $0  \in \mathcal{W}( S^1_{\Theta})^0$ if and only if $c_1 c_2 > \frac{1}{2}$;
\item[$\bullet$] $0 \in \partial \mathcal{W}( S^1_{\Theta})$ if and only if $c_1 c_2 = \frac{1}{2}$.
\end{enumerate}
\end{proposition}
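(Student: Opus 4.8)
The plan is to reduce the whole question to the sign of a single one–variable polynomial controlling the leftmost real extent of the numerical range, using the real-axis symmetry of the set.

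\textbf{Setup and normalization.} First I would record what the hypotheses force. Here $b_j=-1$ and $d_j=0$, and since $p_j(z)=a_j-z_1+c_jz_2$ has a zero on $\mathbb{T}^2$, Proposition~\ref{prop:elementary} gives $a_j=|b_j|+|c_j|=1+c_j$, while \eqref{eqn:zeros}--\eqref{eqn:tau} yield $\tau_{2,j}=-1$ and $\lambda_j^2=\bar a_jc_j-d_j\bar b_j=a_jc_j>0$. Substituting into \eqref{eqn:2by2}, the focal (diagonal) entries become $f_{j,z_2}=\tfrac{1}{a_j+c_j\bar z_2}$ and the $(2,1)$-entry becomes $\sqrt{a_1a_2c_1c_2}\,\tfrac{(\bar z_2+1)^2}{(a_1+c_1\bar z_2)(a_2+c_2\bar z_2)}$. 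A short computation gives $\Re f_{j,z_2}=\tfrac{a_j+c_j\Re z_2}{|a_j+c_jz_2|^2}>0$ for all $z_2\in\mathbb{T}$ (using $a_j-c_j=1>0$), so every focus lies in the open right half-plane; and because all coefficients are real, $M_\Theta(\bar z_2)=\overline{M_\Theta(z_2)}$, whence $\mathcal{W}(M_\Theta(\bar z_2))=\overline{\mathcal{W}(M_\Theta(z_2))}$ and $K:=\mathrm{Clos}(\mathcal{W}(S^1_\Theta))$ is symmetric about $\mathbb{R}$. Since $\mathcal{W}(S^1_\Theta)$ is convex, its interior and boundary coincide with $K^0$ and $\partial K$, so I may argue throughout with $K$.

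\textbf{The easy inclusion.} For $c_1c_2>\tfrac12\Rightarrow 0\in K^0$ I would apply Proposition~\ref{lem:general} with $\gamma=1$ (legitimate since $1\ne\tau_{2,j}=-1$). At $z_2=1$ both foci are real, $f_{j,1}=\tfrac{1}{1+2c_j}$, so condition \eqref{item:equal} reads $\tfrac{1}{1+2c_1}+\tfrac{1}{1+2c_2}<1-\tfrac{1}{(1+2c_1)(1+2c_2)}$; clearing denominators and setting $u=1+2c_1$, $v=1+2c_2$ turns this into $(u-1)(v-1)>2$, i.e. $4c_1c_2>2$, i.e. $c_1c_2>\tfrac12$. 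Thus $c_1c_2>\tfrac12$ already places $0$ inside the single elliptical disk $\mathcal{W}(M_\Theta(1))\subseteq K$.

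\textbf{The leftmost point.} The remaining directions need control over \emph{all} $z_2$, and the right handle, given the right-half-plane location of the foci and the symmetry of $K$, is the leftmost real extent $x_-:=\min_{w\in K}\Re w$. Minimizing $\Re(\cdot)$ commutes with convex hulls and over $\mathcal{W}(M_\Theta(\tau))$ equals $\lambda_{\min}\!\big(\Re M_\Theta(\tau)\big)$, so $x_-=\min_{\tau\in\mathbb{T}}\lambda_{\min}(\Re M_\Theta(\tau))$. For the $2\times2$ Hermitian matrix $\Re M_\Theta(\tau)$, with diagonal $(\Re f_{1,\tau},\Re f_{2,\tau})$ and off-diagonal $\tfrac12(M_\Theta)_{21}(\tau)$, the eigenvalue formula shows (using $\Re f_{j,\tau}>0$) that $\lambda_{\min}\ge0\iff 4\,\Re f_{1,\tau}\Re f_{2,\tau}\ge|(M_\Theta)_{21}(\tau)|^2$. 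With $|\bar\tau+1|^2=2(1+\Re\tau)$ and after clearing the common positive denominator, the sign of this difference is that of $D(t):=(a_1+c_1t)(a_2+c_2t)-a_1a_2c_1c_2(1+t)^2$, where $t=\Re\tau\in[-1,1]$. Putting $a_j=1+c_j$ and $s=1+t\in[0,2]$ collapses this to $D(s)=1+Ps-Q(P+Q)s^2$ with $P=c_1+c_2$, $Q=c_1c_2$: a concave parabola with $D(0)=1>0$, so its minimum on $[0,2]$ occurs at $s=2$ (the point $z_2=1$), and factoring gives $D(2)=-(2Q-1)(2P+2Q+1)$. Since $2P+2Q+1>0$, we get $D(2)<0,=0,>0$ exactly according to $c_1c_2>\tfrac12,=\tfrac12,<\tfrac12$, hence $x_-<0,=0,>0$ in the same cases.

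\textbf{Assembly and the main obstacle.} Finally I would translate the sign of $x_-$ into the position of $0$ via a short convexity/symmetry step: if $x_->0$ then $K\subseteq\{\Re w>0\}$ and $0\notin K$; if $x_-=0$ a minimizer is purely imaginary, its reflection lies in $K$ by the symmetry of the first paragraph, so their midpoint $0\in K$, while $K\subseteq\{\Re w\ge0\}$ forbids $0\in K^0$, giving $0\in\partial K$; and in either $x_-\ge0$ case $0\notin K^0$. Combined with the second paragraph this yields $0\in\mathcal{W}(S^1_\Theta)^0\iff c_1c_2>\tfrac12$ and $0\in\partial\mathcal{W}(S^1_\Theta)\iff c_1c_2=\tfrac12$, with $c_1c_2<\tfrac12$ excluding $0$ from $K$ entirely. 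I expect the subtlest point to be justifying that $x_-$ is the \emph{binding} quantity — that $0$ can fail to be interior only through the leftward direction — which is exactly what the real-axis symmetry of $K$ secures, and what lets me pin $0$ onto $\partial K$ in the borderline case rather than merely outside the interior.
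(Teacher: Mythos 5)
Your proof is correct, and its analytic core coincides with the paper's: both arguments reduce the question to the sign of the smallest eigenvalue of the Hermitian part of $M_\Theta(\tau)$ (i.e., the projection of each elliptical disk onto the real axis), and both arrive at the same quadratic polynomial --- your $D$ is exactly the paper's $f$, and your factorization $D(2)=-(2Q-1)(2P+2Q+1)$ encodes the same information as the paper's explicit roots $\tfrac{1}{c_1c_2}-1$ and $-1-\tfrac{1}{c_1+c_2+c_1c_2}$. Where you genuinely diverge is in the surrounding geometry, and in each instance your route is leaner. For $c_1c_2>\tfrac12\Rightarrow 0\in\mathcal{W}(S^1_\Theta)^0$ you apply Proposition~\ref{lem:general} at $\gamma=1$, placing $0$ strictly inside the single ellipse $\mathcal{W}(M_\Theta(1))$; the paper instead produces some $z_0$ whose ellipse contains a point of negative real part and then forms a triangle from that point, its reflection, and the point $1$, with a separate subcase when the point is real --- your version eliminates that case split (and dovetails with the computation at $z=1$ that the paper itself performs in its case $c_1c_2=\tfrac12$). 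For the borderline case you get $0\in K$ softly: the minimizer of $\Re$ on the compact convex set $K$ is purely imaginary, its conjugate lies in $K$ by the real-coefficient symmetry, and convexity supplies the midpoint; the paper instead verifies the ellipse identity at $z=1$ by direct computation. Finally, for $c_1c_2<\tfrac12$, your observation that $x_-$ is attained and positive (compactness plus continuity of $\tau\mapsto\lambda_{\min}$) plays the role of the paper's explicit uniform bound $m=\min\{f(-1),f(1)\}>0$ on the eigenvalues. One small quibble: your claim that the minimum of the concave parabola $D$ on $[0,2]$ ``occurs at $s=2$'' is not literally true when $D(2)>1=D(0)$; what you actually need, and what concavity with $D(0)=1>0$ delivers, is that the sign of $\min_{[0,2]}D$ equals the sign of $D(2)$ --- phrased that way, the trichotomy is airtight, and this imprecision has no effect on the proof.
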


\begin{proof} First observe that \eqref{eqn:zeros} and our assumptions on the coefficients $a_j, c_j$ imply that $\frac{1}{a_j - c_j } = 1$ and $\tau_{2,j} = -1$  for $j=1,2$.  Corollary ~\ref{thm:nr} implies that
\[ \text{Clos}\left( \mathcal{W}( S^1_{\Theta}) \right) = \text{Conv} \Big( \bigcup_{z \in \mathbb{T}} \mathcal{W}(M_\Theta(z)) \Big) = \text{Conv} \Big( \bigcup_{z \in \mathbb{T}} \mathcal{W}(M_\Theta(\bar{z})) \Big),\]
and to simplify notation, we will often work with $M_\Theta(\bar{z})$. Observe that the circles of foci $\left\{\frac{1}{a_j + c_j z}: \bar{z} \in \mathbb{T} \right\}$ lie in $\mathcal{W}( S^1_{\Theta})$ and cannot contain $\infty$ since $ S^1_{\Theta}$ is a contraction. The circles pass through the points $\frac{1}{a_j + c_j} \in \mathbb{R},$ when $z = 1$, and $\frac{1}{a_j - c_j} = 1,$ when $z = -1$. 

Now we show that $0  \in \mathcal{W}( S^1_{\Theta})^0$ if and only if $c_1 c_2 > \frac{1}{2}.$ As pointed out after \eqref{eqn:pos12},  $M_\Theta(\bar{z})$ has the same numerical range as 
\[
\widehat{M}_{\Theta}(\bar{z}) =
  \begin{bmatrix}
    \frac{1}{a_2 + c_2 z}& \sqrt{|a_1 a_2 c_1 c_2|} \frac{|z +1|^2}{|a_1 + c_1 z|\,|a_2 + c_2 z|}\\
   0 & \frac{1}{a_1 + c_1 z}
  \end{bmatrix}
\]
and so, we work with $\widehat{M}_{\Theta}(\bar{z})$. In particular $H(M_\Theta(\bar{z})):= \frac{1}{2} \left(\widehat{M}_{\Theta}(\bar{z}) + \widehat{M}_{\Theta}(\bar{z})^*\right)$ is a Hermitian matrix and therefore its numerical range is a real line segment. The endpoints are the minimum and maximum eigenvalues of $H(M_\Theta(\bar{z}))$, see  \cite[p.12]{HJ} or \cite{K08}. Furthermore, $\mathcal{W}(H(M_\Theta(\bar{z})))$ is the projection of $\mathcal{W}\big(\widehat{M}_{\Theta}(\bar{z})\big)$ and hence, of $\mathcal{W}(M_\Theta(\bar{z}))$, onto the real axis.  We now study the eigenvalues of $H(M_\Theta(\bar{z}))$, which give the minimum and maximum real parts of the elements in $\mathcal{W}(M_\Theta(\bar{z}))$.



First, the trace of $H(M_\Theta(\bar{z}))$, which is the sum of the two eigenvalues of $H(M_\Theta(\bar{z}))$, equals
\[\frac{a_1 + c_1 \Re z}{|a_1 + c_1 z|^2}+\frac{a_2 + c_2 \Re z}{|a_2 + c_2 z|^2} >0,\]  since $a_j - c_j = 1$. This shows that at least one eigenvalue of $H(M_\Theta(\bar{z}))$ is positive. Then, the minimum eigenvalue will be negative if and only if $\det(H(M_\Theta(\bar{z})) < 0$. In this case, we have
\[\det \left(H(M_\Theta(\bar{z}))\right) =
\det\begin{bmatrix}
    \frac{a_2 + c_2 \Re z}{|a_2 + c_2 z|^2}& \frac{\sqrt{|a_1 a_2 c_1 c_2|}}{2} \left(\frac{|z +1|^2}{|a_1 + c_1 z|\,|a_2 + c_2 z|}\right)\\
    &\\
\frac{\sqrt{|a_1 a_2 c_1 c_2|}}{2} \left(\frac{|z +1|^2}{|a_1 + c_1 z|\,|a_2 + c_2 z|}\right) & \frac{a_1 + c_1 \Re z}{|a_1 + c_1 z|^2}
  \end{bmatrix}.
\]
Let $x = \Re z$. Then some $H(M_\Theta(\bar{z}))$ will have a negative eigenvalue if and only if there exists  $x \in (-1, 1]$ with
\[f(x) = (1 + c_1 + c_1 x)(1 + c_2 + c_2 x) - ((c_1+c_1^2)(c_2 + c_2^2))(1+ x)^2 < 0.\]
The two zeros of $f$ occur at
$$\frac{1}{c_1 c_2} - 1 ~\mbox{and}~ -1 - \frac{1}{c_1 + c_2 + c_1 c_2}.$$
Thus, $f$ has a zero between $-1$ and $1$ if and only if $c_1 c_2 > \frac{1}{2}$ and $f$ will be negative at some point $x \in (-1, 1)$ if and only if one zero lies strictly between $-1$ and $1$. Therefore:

\begin{enumerate}
\item \label{item:1} If $c_1 c_2 < \frac{1}{2}$, then there is no such value of $x$. This implies that for each $z \in \mathbb{T}$,  the matrix $H(M_\Theta(\bar{z}))$ has only positive eigenvalues and so, $\mathcal{W}(M_\Theta(\bar{z})) \subseteq \{x + i y: x > 0\}$. From this, we can conclude that $0 \notin \mathcal{W}( S^1_{\Theta})^0$.

\smallskip
\item If $c_1 c_2 > \frac{1}{2}$, then $f$ is negative at some point strictly between $\frac{1}{c_1 c_2} - 1$ and $1$. Therefore, for some $z_0 \in \mathbb{T}$ (with $z_0 \ne \pm 1$) one eigenvalue of $H(M_\Theta(\bar{z}_0))$ is positive and one is negative. Thus, $\mathcal{W}(M_\Theta(\bar{z}_0))$ contains a point $\lambda_{z_0}$ with negative real part.

Recall that the numerical range of any $M_\Theta(z)$ is the elliptical disk with foci at $ \frac{1}{a_j + c_j \bar{z}}$ and minor axis of length 
\[ \sqrt{|a_1 a_2 c_1 c_2|} \frac{|z +1|^2}{|a_1 + c_1 z|\,|a_2 + c_2 z|}.\]
This implies that $\mathcal{W}(M_\Theta(z_0))$ is the reflection of $\mathcal{W}(M_\Theta(\bar{z}_0))$ across the $x$-axis and thus, $\bar{\lambda}_{z_0} \in \mathcal{W}( S^1_{\Theta})$. If $\lambda_{z_0} \notin \mathbb{R}$, the triangle joining $\lambda_{z_0}, \bar{\lambda}_{z_0}$ and $1$ must be contained in $\mathcal{W}( S^1_{\Theta})$, which implies $0 \in \mathcal{W}( S^1_{\Theta})^0$.

Now let $\lambda_{z_0} \in \mathbb{R}$. By assumption, we also have $\lambda_{z_0}$ negative.
By earlier arguments, the circle $\big\{\frac{1}{a_1 + c_1 z}: z \in \mathbb{T}\big\} \subset \mathcal{W}( S^1_{\Theta})$. This circle passes through the points $1$ and $\frac{1}{a_1 + c_1}$ so it contains points in the first and fourth quadrants. Denote two such points by $\lambda_I$ and $\lambda_{IV}$. Then, the triangle joining $\lambda_{z_0}$, $\lambda_I$, and $\lambda_{IV}$ is contained in the numerical range and so $0 \in \mathcal{W}( S^1_{\Theta})^0$.

\smallskip

 
\item If $c_1 c_2 = \frac{1}{2}$, then $f(x) > 0$ for all $x \in (-1, 1)$ and there are no values in any $\mathcal{W}(M_\Theta(\bar{z}))$ with negative real part; i.e., $\bigcup_{z \in \mathbb{T}} \mathcal{W}(M_\Theta(\bar{z})) \subseteq \{z \in \mathbb{C}: z = x + i y, x \ge 0\}$. On the other hand, if we consider $z = 1$, we can see that zero satisfies the equation 
\[\left|0 - \frac{1}{a_1 + c_1}\right| + \left|0 - \frac{1}{a_2 + c_2}\right| = \left|1 - \left(\frac{1}{a_1 +  c_1}\right)\left(\frac{1}{a_2 + c_2}\right)\right|.\] Thus $0 \in \mathcal{W}(M_\Theta(1))$ and therefore $0 \in \partial \mathcal{W}( S^1_{\Theta})$.
 \end{enumerate}

From these arguments, we know that if $c_1 c_2 > \frac{1}{2}$, then $0 \in \mathcal{W}( S^1_{\Theta})^0$ and if $c_1 c_2 < \frac{1}{2}$, then $0 \notin \mathcal{W}( S^1_{\Theta})^0$. Furthermore, if $c_1 c_2 = \frac{1}{2}$, then $0 \in \partial \mathcal{W}( S^1_{\Theta})$. Thus, we have proven most of Proposition~\ref{lem:positive_eigenvalues}. It just remains to show that if $0 \in \partial \mathcal{W}( S^1_{\Theta})$, then $c_1 c_2 = \frac{1}{2}$.


Assume  $0 \in \partial \mathcal{W}( S^1_{\Theta})$. If $c_1 c_2 > \frac{1}{2}$, then $0 \in \mathcal{W}( S^1_{\Theta})^0$, a contradiction. If 
$c_1 c_2 < \frac{1}{2}$, the zeros of $f$ are at 
\[\frac{1}{c_1 c_2} - 1 >1 ~\mbox{ and }~ -1 - \frac{1}{c_1 + c_2 + c_1 c_2} <-1.\]
 Since $f(x) = \alpha_1 + \alpha_2 x + \alpha_3 x^2$ with $\alpha_3 = c_1 c_2 - c_1 c_2 (1+c_1)(1+c_2) < 0$, the minimum value of $f$ on $[-1,1]$  must be either $f(-1) = 1$ or \[f(1) = (1 + 2c_1)(1 + 2c_2) - 4 c_1 c_2(1+c_1)(1+c_2) = (1 - 4 c_1^2 c_2^2)+2c_1(1 - 2c_1 c_2)+2c_2(1-2c_1c_2) >  0.\] 
Now define the quantity
 \[m:=\min \left \{f(-1), f(1) \right \} > 0.\]
Fix $z \in \mathbb{T}$ and let $\lambda_1, \lambda_2$ be the two eigenvalues of $H(M_\Theta(\bar{z}))$. Since $ S^1_{\Theta}$ is a contraction, we know $\lambda_1, \lambda_2 \le 1.$ Without loss of generality, assume $\lambda_1 = \min \{\lambda_1, \lambda_2\}$. By assumption, $\lambda_2 \ne 0$, since $f(\Re z) \ne 0$. Then we can conclude that 
 \[ \lambda_1 = \frac{ \det \left( H(M_\Theta(\bar{z})\right) }{\lambda_2} \ge  \det \left( H(M_\Theta(\bar{z})\right) \ge m.\]
This immediately implies that for each $z \in \mathbb{T}$, we have $\mathcal{W}\left(M_{\Theta}(\bar{z})\right) \subseteq \{x + i y: x \ge m \}$ and zero cannot lie in the convex hull of the union of these sets. So, if zero lies in the boundary of the numerical range, then $c_1 c_2 = \frac{1}{2}$.
\end{proof}

\section{Boundary of the Numerical Range} \label{sec:boundary}

\subsection{Initial Reductions and Formulas}
We now analyze the boundary of $\mathcal{W}( S^1_{\Theta})$ or equivalently, the boundary of $\text{Clos} (\mathcal{W}( S^1_{\Theta})),$ for a special class of rational inner functions. Specifically, let $\Theta = \theta^2_1$, where $\theta_1$ has a zero on $\mathbb{T}^2$ and $\theta_1 = \frac{\tilde{p}}{p}$ for $p(z)=a -z_1 + c z_2$ with $a,c \ne 0$. The following remark shows that, without loss of further generality, we can assume $a, c >0.$

\begin{remark} Assume $\theta_1 = \frac{\tilde{p}}{p}$ for some $p(z)=a -z_1 + c z_2$ and set $\Theta = \theta_1^2.$ Then by Corollary~\ref{thm:nr},
\[\text{Clos}\left( \mathcal{W} \left( S^1_{\Theta}\right) \right) = \text{Conv}\Big( \bigcup_{\tau \in \mathbb{T}} \mathcal{W}(M_{\Theta}(\tau))\Big),\] where $M_{\Theta}$ is the $2\times 2$ matrix-valued function from Theorem \ref{thm:nr1}. Now write  $a = |a| e^{i \theta}$, $c = |c| e^{i \psi}$, and $w = e^{i (\psi - \theta)}z$ and observe that \eqref{eqn:zeros} implies that $\tau_2 = - e^{i (\theta - \psi)}$. With these substitutions, $M_{\Theta}^*$ changes from
\[
M^*_\Theta(z) =
  \begin{bmatrix}
    \frac{1}{a + c z}& \bar{a}c \frac{(z -\tau_2)^2}{(a + c z)^2}\\
   0 & \frac{1}{a + c z}
  \end{bmatrix}
\ \text{ to } \ 
\widetilde{M}_{\Theta}^*(w)=
 e^{-i\theta} \begin{bmatrix}
    \frac{1}{|a| + |c|w}& |ac|e^{-i\psi} \frac{(w +1)^2}{(|a| + |c| w)^2}\\
   0 & \frac{1}{|a| + |c| w}
  \end{bmatrix}.
\]
Since, when computing numerical ranges, the variables $z$ and $w$ above will take on all values in $\mathbb{T}$, we can conclude 
\[\text{Clos}\left( \mathcal{W} \left( S^1_{\Theta}\right) \right) = \text{Conv}\Big( \bigcup_{\tau \in \mathbb{T}} \mathcal{W}(M_{\Theta}(\tau))\Big) =  \text{Conv}\Big( \bigcup_{\tau \in \mathbb{T}} \mathcal{W}(\widetilde{M}_{\Theta}(\tau))\Big).\]
Thus, if we set $q(z) = |a| -z_1 + |c| z_2$ and $\phi_1 = \frac{\tilde{q}}{q}$ and define $\Phi = \phi_1^2$, then $\text{Clos} (\mathcal{W}( S^1_{\Phi}))$  equals $\text{Clos} (\mathcal{W}( S^1_{\Theta}))$.
\end{remark}

Henceforth, we assume that $p(z)= a -z_1 +cz_2$ where $a, c >0$. By \eqref{eqn:zeros} and Proposition~\ref{prop:elementary}, this forces $\tau_2=-1$ and $a-c=1$.  Furthermore, by the Elliptical Range Theorem, the boundary of each $\mathcal{W}(M_{\Theta}(\tau))$ is a circle with center $c_{\tau}:=\frac{1}{a + c \bar{\tau}}$ and radius $r_{\tau}$ half the modulus of the $(2,1)$-entry of $M_{\Theta}(\tau)$.  Thus, we need to understand a family of circles. For later computations, we require the following alternate parameterization.

\begin{remark} \label{rem:circles} The set of circles $\left \{ \partial \mathcal{W}\big( M_{\Theta}(\tau)\big) \right\}_{\tau \in \mathbb{T}}$  is equal to the set of circles $\{ \mathcal{C}_{\theta}\}_{\theta \in [0,2\pi)}$, where each $\mathcal{C}_{\theta}$ has  center and radius given by
\begin{equation}
\label{eqn:circle} c(\theta) :=  \frac{ a + ce^{i \theta}}{a+c} \ \ \text{ and } \ \ 
r(\theta) :=  \frac{ac}{(a+c)^2}\left(1- \cos \theta\right).\end{equation}
To see this, define the Blaschke factor
\[ B(z): = - \frac{ \frac{c}{a} +z}{1 + \frac{c}{a} z } = - \frac{ c +a z }{a +c z}.\]
Then $B$ maps $\mathbb{T}$ one-to-one and onto itself. Now fix $\tau \in \mathbb{T}$, set $\lambda = B(\bar{\tau}) \in \mathbb{T}$, and choose $\theta$ to be the unique
angle in $[0, 2\pi)$ with $\lambda = e^{i\theta}.$ Observe that 
\[ a + c \lambda = \frac{ a ( a +c \bar{\tau}) -c(c +a \bar{\tau})}{a + c\bar{\tau}} = \frac{(a+c)(a-c)}{a + c \bar{\tau}} = \frac{ a+c}{a+c\bar{\tau}},  \] 
where we used $a-c=1$. Then the center of $\partial \mathcal{W}\big( M_{\Theta}(\tau))$ is 
\[ c_{\tau} = \frac{1}{a+c \bar{\tau}} \cdot \frac{ a+c\lambda}{a+c\lambda} =  \frac{a+c\lambda}{a+c} = c(\theta).\]
To consider the radius, first observe that since $\lambda \in \mathbb{T}$, we have $2(1-\cos \theta ) = \left | 1 - \lambda \right|^2.$ Moreover
\[ \left | 1 - \lambda \right|^2 = \left | 1  + \frac{ c +a \bar{\tau} }{a +c \bar{\tau}} \right|^2 = (a+c)^2 \left |\frac{ 1 + \bar{\tau}}{a + c\bar{\tau}} \right |^2. \]
Using that equation, we can write the radius of $\partial \mathcal{W}\big( M_{\Theta}(\tau))$ as
\[ r_{\tau} = \frac{ac}{2}  \left | \frac{ 1 + \bar{\tau}}{a + c \bar{\tau}}\right|^2 = \frac{ac\left | 1 - \lambda \right|^2}{2(a+c)^2}  = \frac{ac(1 -\cos \theta)}{(a+c)^2} = r(\theta), \]
which proves the  claim.
\end {remark}

\subsection{Circular Numerical Ranges}

In the one-variable situation, if $B$ is a degree-$2$ Blaschke product, then the numerical range of $S_B$ is circular disk if and only if the two zeros of $B$ are the same. One might conjecture that a similar statement should hold in two variables, namely if $\theta = \theta_1^2$, then Clos$(\mathcal{W}( S^1_{\Theta}))$ is circular. In this section, we show this is not the case.

Now fix $\tau \in \mathbb{T}$. Then $\partial \mathcal{W}(M_{\Theta}(\tau))$ is a circle with radius
\[r_{\tau} = \frac{ac}{2} \frac{|\bar{\tau} + 1|^2}{|a + c\bar{\tau}|^2} = ac \frac{1 + x}{(a + c x)^2 + c^2(1-x^2)},\] 
where $\tau = x + i y$. One can check that $r_{\tau}$ increases as $x$ increases. Therefore, the maximum and minimum values of $r_{\tau}$
occur when $\tau = 1$ and $\tau = -1$, respectively.  Now consider the alternate formulas given in \eqref{eqn:circle}. First, since the centers are exactly the points
\[ c(\theta) = \frac{a+ce^{i\theta} }{a+c} = \left( \frac{a}{a+c} + \frac{c}{a+c}\cos \theta, \ \frac{c}{a+c}\sin \theta\right),\]
$c(\theta)$ and $c(2\pi-\theta)$ are reflections of each other across the real axis. Moreover, \eqref{eqn:circle} also implies that $r(\theta) = r(2\pi-\theta)$ and so
 the set of circles $\{\mathcal{C}_{\theta}\}_{\theta \in [0,2\pi)}$ is symmetric with respect to the real axis. This immediately implies  Clos$(\mathcal{W}( S^1_{\Theta}))$ must also be symmetric with respect to the real axis.

Thus, if Clos$(\mathcal{W}( S^1_{\Theta}))$ were circular, the real line would contain the diameter. Furthermore, the value $\frac{1}{a-c} = 1$ obtained when $\tau = -1$ is in the numerical range and the numerical radius is $1$. So $1$ is the maximum value on the real axis. The smallest value on the real axis occurs when $\tau= 1$ or equivalently, when $\theta=\pi.$  Then \eqref{eqn:circle} shows that 
the center $c_{1}= \frac{1}{a+c}$ is real and has real part smaller than any other $c_{\tau}$. Similarly, the radius $r_{1} =\frac{2ac}{(a+c)^2}$ is maximal and so, the smallest value of Clos$(\mathcal{W}( S^1_{\Theta}))$ on the real axis is
 \[
 \frac{1}{a + c} -\frac{2 ac}{(a + c)^2}=  \frac{a + c - 2 a c}{(a+c)^2}.
 \] 
Thus, these are the extreme real values of the numerical range, and if the numerical range were circular, they would be the endpoints of a diameter. Then the center of the circle would be the point $(\alpha, 0)$, with $\alpha$  given by
\begin{align}
 \alpha &:= \frac{1}{2}\left(1 + \frac{a + c - 2 a c}{(a+c)^2}\right) 
= \frac{a^2 + a + c(1+c)}{2(a+c)^2}
= \frac{a^2 + a + (a-1)a}{2(a + c)^2} =  \frac{a^2}{(a+c)^2},
\end{align}
where we used $a=c+1$. Similarly, the radius $r$ would be
\begin{align*}
r &: =\frac{1}{2}\left(1 - \frac{a + c - 2 a c}{(a+c)^2}\right) = \frac{a^2 - a + 4ac + c^2 - c}{2(a+c)^2} \\
&=\frac{4 a c + c^2 - c+(c+1)^2-(c+1)}{2(a + c)^2}= \frac{2 a c + c^2}{(a + c)^2}. 
\end{align*}
We can now find a point $Q$ that is in the numerical range but is not in that circle. Specifically, consider $\theta = \frac{\pi}{2}$
and using \eqref{eqn:circle}, define the point $Q$ by
\[Q := c\Big(\tfrac{\pi}{2}\Big) + r\Big(\tfrac{\pi}{2}\Big) e^{i\frac{\pi}{2}} = \left(\frac{a}{a + c}, \frac{c^2 + 2ac}{(a+c)^2}\right),\]
which is on $\mathcal{C}_{\frac{\pi}{2}}$ and hence is in Clos$(\mathcal{W}( S^1_{\Theta}))$. If the numerical range were circular, the point $Q$ would lie in or on the circle bounding the numerical range with center $(\alpha,0)$ and radius $r$. Computing the distance from $Q$ to the center gives
\[
~\mbox{dist}^2(Q,(\alpha, 0)) = \left(\frac{a}{a+c} - \frac{a^2}{(a+c)^2}\right)^2+\frac{(c^2 + 2 ac)^2}{(a + c)^4} = \frac{(ac)^2}{(a+c)^4}+\frac{(c^2+2ac)^2}{(a+c)^4}.\] 
For $Q$ to be in the circle, we must have 
\[~\mbox{dist}^2(Q,(\alpha, 0)) \le r^2 = \frac{(2 a c+c^2)^2}{(a + c)^4},\] 
which is impossible.
Thus, Clos$(\mathcal{W}( S^1_{\Theta}))$ cannot be circular.

\subsection{The Boundary of the Numerical Range}

The goal of this section is to prove the following theorem:

\begin{theorem} \label{thm:boundary} Let $\Theta = \theta^2_1$ be a degree $(2,2)$ rational inner function, where $\theta_1 = \frac{\tilde{p}}{p}$ for a polynomial $p(z)=a -z_1 + c z_2$ with no zeros on $\mathbb{D}^2$, a zero on $\mathbb{T}^2$, and $a, c >0$. Then the boundary of $\mathcal{W}( S^1_{\Theta})$ is given by the curve $E = (x(\theta), y(\theta))$ where 
\[ \begin{aligned}
x(\theta) &= \frac{a + c \cos \theta}{a+c}  + \frac{ac(1-\cos \theta)}{(a+c)^2}  \cos\left ( \theta - \text{\emph{arcsin}}\left( \frac{a}{a+c} \sin \theta \right) \right) \\
  y(\theta) &= \frac{c \sin \theta}{a+c}  + \frac{ac(1-\cos \theta)}{(a+c)^2}  \sin\left ( \theta - \text{\emph{arcsin}}\left( \frac{a}{a+c} \sin \theta \right) \right),
\end{aligned}
\]
for $\theta \in [0, 2\pi)$. 
\end{theorem}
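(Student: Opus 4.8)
The plan is to express $\text{Clos}(\mathcal{W}(S^1_\Theta))$ as a convex hull of disks and then recognize its boundary as the outer envelope of the associated family of circles. By Corollary~\ref{thm:nr} together with Remark~\ref{rem:circles} and \eqref{eqn:circle},
\[ \text{Clos}\left(\mathcal{W}(S^1_\Theta)\right) = \text{Conv}\Big( \bigcup_{\theta \in [0,2\pi)} \mathcal{D}_\theta\Big),\]
where $\mathcal{D}_\theta$ is the closed disk bounded by $\mathcal{C}_\theta$, with center $c(\theta) = \frac{a + ce^{i\theta}}{a+c}$ and radius $r(\theta) = \frac{ac(1-\cos\theta)}{(a+c)^2}$. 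As every numerical range is convex, this is a convex body, so its boundary is a convex curve; the goal is to parameterize it.

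First I would compute the envelope of $\{\mathcal{C}_\theta\}$, identifying $\mathbb{R}^2$ with $\mathbb{C}$ and writing a generic envelope point as $z(\theta) = c(\theta) + r(\theta)e^{i\psi(\theta)}$ for an unknown contact angle $\psi(\theta)$. The envelope conditions $F := |z - c(\theta)|^2 - r(\theta)^2 = 0$ and $\partial_\theta F = 0$ reduce, upon substituting $z - c = re^{i\psi}$ and dividing by $r$, to the single scalar equation $\text{Re}\big[\overline{c'(\theta)}\,e^{i\psi}\big] = -r'(\theta)$. Since $c'(\theta) = \frac{ice^{i\theta}}{a+c}$ and $r'(\theta) = \frac{ac\sin\theta}{(a+c)^2}$, this becomes $\frac{c}{a+c}\sin(\psi - \theta) = -\frac{ac\sin\theta}{(a+c)^2}$, i.e.
\[ \sin(\psi - \theta) = -\frac{a}{a+c}\sin\theta.\]
Choosing the branch $\psi(\theta) = \theta - \arcsin\big(\frac{a}{a+c}\sin\theta\big)$ and substituting into $z(\theta) = c(\theta) + r(\theta)e^{i\psi(\theta)}$ produces exactly the coordinates $x(\theta), y(\theta)$ in the statement, since $\text{Re}\,c(\theta) = \frac{a+c\cos\theta}{a+c}$ and $\text{Im}\,c(\theta) = \frac{c\sin\theta}{a+c}$.

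The hard part will be confirming that this envelope is genuinely $\partial\mathcal{W}(S^1_\Theta)$ rather than merely a curve tangent to each circle, and in particular that no straight bitangent segments intervene. I would do this through the support function $h(u) = \max_\theta\big[\langle c(\theta), u\rangle + r(\theta)\big]$ of the convex hull: for a unit direction $u = e^{i\psi}$, the maximizing $\theta$ satisfies $\langle c'(\theta), u\rangle + r'(\theta) = 0$, which is precisely the envelope equation above, and the second-order condition selects the maximum, pinning down the correct (outer rather than inner) branch of $\arcsin$. To rule out segments I would show that $\psi$ reparameterizes the direction circle: a direct computation gives
\[ \psi'(\theta) = 1 - \frac{\frac{a}{a+c}\cos\theta}{\sqrt{1 - \big(\frac{a}{a+c}\sin\theta\big)^2}},\]
and squaring the inequality $\frac{a}{a+c}\cos\theta < \sqrt{1 - \big(\frac{a}{a+c}\sin\theta\big)^2}$ reduces it to $\big(\frac{a}{a+c}\big)^2 < 1$, which holds since $c > 0$; hence the subtracted quantity is always strictly less than $1$ and $\psi'(\theta) > 0$ throughout. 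Thus $\theta \mapsto \psi(\theta)$ is a strictly increasing continuous bijection of $[0,2\pi)$, every supporting direction is attained at a unique $\theta$, the maximizer never jumps, and the boundary contains no line segments.

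Finally I would record the consistency checks that fix orientation and endpoints: the symmetries $c(2\pi - \theta) = \overline{c(\theta)}$ and $r(2\pi - \theta) = r(\theta)$ from the previous subsection make $E$ symmetric about the real axis, and evaluating at $\theta = \pi$ and $\theta = 0$ recovers the extreme real values $\frac{a+c-2ac}{(a+c)^2}$ and $1$ already computed there, using $a - c = 1$ and $\tau_2 = -1$. Together these show that $E = (x(\theta), y(\theta))$, $\theta \in [0,2\pi)$, traverses the entire boundary exactly once, which is the assertion of the theorem.
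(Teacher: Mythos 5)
Your proposal is correct, and although it computes the same envelope as the paper, it justifies the crucial step --- that this envelope really is the boundary of the numerical range --- by a genuinely different route. Both arguments start from Corollary \ref{thm:nr} and Remark \ref{rem:circles}, and both arrive at the same contact condition $\sin(\psi - \theta) = -\frac{a}{a+c}\sin\theta$ with the branch $\psi(\theta) = \theta - \arcsin\big(\tfrac{a}{a+c}\sin\theta\big)$; this $\psi$ is the paper's $s_1$, and your positivity computation for $\psi'$ is exactly the paper's proof that $s_1' > 0$. The divergence is in what that monotonicity is used for. The paper analyzes the a priori non-convex union $\Omega$ of the disks: it runs the envelope algorithm to get two curves $E_1, E_2$, places $E_1$ outside the circle of centers using intersection points of nearby circles, identifies the boundary of $\Omega$ with $E_1$ through a limiting argument over finite subfamilies of circles, and only then obtains convexity from the parallel-tangents criterion, so that passing to the convex hull adds nothing. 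You bypass $\Omega$ entirely and work with the convex hull's support function $h(e^{i\psi}) = \max_\theta\big[\langle c(\theta), e^{i\psi}\rangle + r(\theta)\big]$: the first-order condition in $\theta$ is the envelope equation, the second-order condition picks the outer branch, and strict monotonicity of $\psi$ gives a unique maximizing $\theta$ for each direction, so each face of the convex body is the single point $c(\theta) + r(\theta)e^{i\psi(\theta)}$ and the boundary is precisely your curve, with no bitangent segments. This is cleaner and shorter than the paper's treatment, at the cost of invoking standard convex geometry. To make it airtight you should write out the two steps you only sketch: (i) that the face of $\mathrm{Conv}\big(\bigcup_\theta \overline{\mathcal{D}}_\theta\big)$ in direction $u$ is the convex hull of the farthest points of the disks attaining the maximum (every extreme point of the hull lies on some disk), so a unique maximizer really does force a singleton face; and (ii) the second-order verification itself: at a critical point on your branch $\cos(\theta - \psi) = \sqrt{1 - \tfrac{a^2}{(a+c)^2}\sin^2\theta} > 0$, and the condition $g_\psi''(\theta) = -\frac{c}{a+c}\cos(\theta - \psi) + \frac{ac}{(a+c)^2}\cos\theta < 0$ reduces to the same inequality $\big(\tfrac{a}{a+c}\big)^2 < 1$, while on the other branch the sign of $\cos(\theta-\psi)$ flips and every critical point is a local minimum; since $g_\psi$ is non-constant with exactly two critical points on the circle, your branch carries the unique global maximum for every direction, which is exactly the uniqueness your face argument needs.
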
    

We prove Theorem \ref{thm:boundary} using the theory of envelopes of families of curves. The proof takes a bit of work, so we break it into sections.

\subsubsection{Introduction to Envelopes} Let $f(x,y,\theta)=0$ be a family of (distinct) curves parameterized by $\theta$.
One may think of the {\it envelope} $E$ of a family of curves as a curve that is tangent to each member of the family. There are several competing definitions for the notion of an envelope, one of which is the curve that satisfies the {\it envelope algorithm} that we describe below. We take that as our definition, noting that in this case, the standard ways of thinking about envelopes agree. A discussion of these notions can be found in Courant \cite[p. 171]{Courant}. We also refer readers interested in envelopes to \cite{kalman07}. 

Assume the family of curves $f(x,y,\theta)=0$ satisfies $f_x^2 + f_y^2 \ne 0.$ Let $E$ be a curve parameterized as $(x(\theta), y(\theta))$  where $x(\theta)$ and $y(\theta)$ are continuously differentiable functions. Then we say that $E$ satisfies the {\it envelope algorithm} if the points on $E$ satisfy the equations
\begin{equation} \label{eqn:econd} f(x, y, \theta) = 0 \text{ and } f_{\theta}(x, y, \theta) = 0 \end{equation} 
and the functions $x(\theta)$ and $y(\theta)$ satisfy 
\begin{equation}\label{derivative_theta} \left(\tfrac{dx}{d\theta}\right)^2 + \left(\tfrac{dy}{d\theta}\right)^2 \ne 0.
\end{equation}
An alternate way to compute an envelope $E$ involves using intersections of the curves $f(x,y,\theta)=0$ associated to different $\theta$. For this method, assume an envelope $E$ exists and can be  parameterized as $(x(\theta), y(\theta))$  for $x(\theta), y(\theta)$ continuously differentiable functions satisfying \eqref{derivative_theta}. Then,  fix $h$ and $\theta$ and locate the intersection point of the curves $f(x, y, \theta+h) = 0$ and $f(x, y, \theta) = 0$; call this point $p_{h,\theta}.$ Then $p_{\theta}:= \lim_{h \rightarrow 0} p_{h, \theta}$ gives the point on the envelope $E$ tangent to the curve $f(x,y,\theta)=0$.

\subsubsection{Notation and Summary.} 
To study $\mathcal{W}( S^1_{\Theta})$, Corollary \ref{thm:nr} implies that we need to study the family of circles $\{ \partial \mathcal{W}(M_{\Theta}(\tau))\}_{\tau \in \mathbb{T}}$. By Remark \ref{rem:circles}, it is equivalent to consider the family of circles $\{ \mathcal{C}_{\theta} \}_{\theta \in [0, 2\pi)}$, where
each $\mathcal{C}_{\theta}$ has center and radius given by
\[ 
\begin{aligned}
c(\theta) &=  c_1(\theta) + i c_2(\theta) = \frac{ a+c \cos \theta}{a+c} +i \frac{c \sin \theta}{a+c}; \\
 r(\theta) &=  \frac{ac}{(a+c)^2}\left(1- \cos \theta\right).
 \end{aligned}
 \]
 To align with the envelope notation, observe that the family of circles $\{C_{\theta}\}_{\theta \in [0,2\pi)}$ is also the set of curves satisfying $f(x,y,\theta)=0$ for
\begin{equation} \label{eqn:Ctheta} f(x,y,\theta) = \left( x- c_1(\theta) \right)^2 +  \left( y- c_2(\theta) \right)^2 - r(\theta)^2, \quad \theta \in [0, 2\pi). \end{equation}
For each $\theta \in [0, 2\pi)$, let $\mathcal{D}_{\theta}$ denote
 the open disk with boundary $\mathcal{C}_{\theta}$. Let $\mathcal{C}: = \{c(\theta) : \theta \in [0, 2\pi)\} $ denote the circle of centers of the 
 $\mathcal{C}_{\theta}$ and let $\mathcal{D}$ denote the open disk with boundary $\mathcal{C}.$   
 Set $\Omega = \mathcal{D} \cup \bigcup_{\theta \in [0, 2\pi)} \mathcal{D}_{\theta}$ and let $\mathcal{B}$ denote the boundary of $\Omega.$ Then the closure of the numerical 
 range $\mathcal{W}( S^1_{\Theta})$ is the closed convex hull of $\Omega.$  

In what follows, we find an envelope of the family of curves $\{\mathcal{C}_{\theta}\}_{\theta \in [0,2\pi)}$ and use it to compute the boundary of 
$\mathcal{W}( S^1_{\Theta})$. First, observe that our family of curves satisfies $f_x^2 + f_y^2 \ne 0$ for $\theta \ne 0.$ Then to find an envelope of 
$\{\mathcal{C}_{\theta}\}_{\theta \in [0,2\pi)}$, we need only find a curve $E$ satisfying  \eqref{eqn:econd} and \eqref{derivative_theta}.
Specifically, we will find all points satisfying \eqref{eqn:econd}. These points 
 will yield two curves $E_1$ and $E_2$. We will show $E_1$ also satisfies \eqref{derivative_theta} and thus, gives an envelope for
 our family of curves. We further show that $E_1$ is a convex curve bounding the set $\Omega.$ This implies $\Omega$ is convex
 and so $\overline{\Omega} =$ Clos$(\mathcal{W}( S^1_{\Theta}))$. Thus $E_1$ gives the boundary of Clos$(\mathcal{W}( S^1_{\Theta})),$ and hence of $\mathcal{W}( S^1_{\Theta})$,
 as desired.
 
\subsubsection{Finding the Envelope.} We first identify all points satisfying \eqref{eqn:econd}, which
gives the two equations $f(x,y,\theta)=0$
and 
\begin{equation} \label{eqn:deriv_theta2} -\left( x- c_1(\theta) \right) c_1'(\theta)  -  \left( y- c_2(\theta) \right) c_2'(\theta)  - r(\theta) r'(\theta)=0.\end{equation}
Observe that we can write each circle $\mathcal{C}_{\theta}$ parametrically as
\[ x(s) = c_1(\theta) + r(\theta) \cos(s), \ \  y(s) =c_2(\theta) + r(\theta) \sin (s), \ \ s\in [0, 2\pi).\] 
Then \eqref{eqn:deriv_theta2} is equivalent to 
\[- r(\theta) \cos(s) \frac{c \sin \theta}{a+c} + r(\theta) \sin(s) \frac{c \cos \theta}{a+c} + r(\theta) \frac{ac}{(a+c)^2} \sin \theta=0.\]
For $\theta \ne 0$, we have $r(\theta) \ne 0$ and so, this is equivalent to 
\begin{equation} \label{eqn:s} \sin(s-\theta) = \cos \theta \sin(s) - \sin \theta \cos(s) = -\frac{a}{a+c} \sin \theta.\end{equation}
Note that the above equation has two solutions for $s$:
\begin{equation} \label{eqn:sj}  s_1(\theta) := \theta - \text{arcsin}\left( \frac{a}{a+c} \sin \theta\right)  \ \text{ and } \ s_2(\theta) :=  \theta - \pi + \text{arcsin}\left( \frac{a}{a+c} \sin \theta\right).\end{equation}
Then the curves $E_1(\theta)=(x_1(\theta), y_1(\theta)) $ and $E_2(\theta)=(x_2(\theta), y_2(\theta)) $ defined by  
\[ 
x_j(\theta) := c_1(\theta) + r(\theta)  \cos( s_j(\theta)) \text{ and }  y_j(\theta) := c_2(\theta) + r(\theta)  \sin( s_j(\theta)),  \ \ \theta \in (0, 2\pi), j=1,2 \\
\]
give two curves whose points satisfy \eqref{eqn:econd}. 

Since we are concerned with the convex hull of the family of circles $\{\mathcal{C}_{\theta}\}_{\theta\in [0, 2\pi)}$, we consider the outer curve $E_1$. To show that $E_1$ satisfies \eqref{derivative_theta}, we need to do a little more work. 
First, observe that \eqref{eqn:s} implies the following two equations:
\begin{equation} \label{eqn:useful} \cos\big( \theta - s_1(\theta) \big)  \left( 1-s_1'(\theta)\right) = \frac{a}{a+c} \cos \theta \ \ \text{ and } \ \  \Re \left(c^\prime(\theta) e^{-i s_1(\theta)}\right) = - r^\prime(\theta).\end{equation}
We can obtain more information by writing 
\[ E_1(\theta) = c(\theta) + r(\theta) e^{i s_1(\theta)} = \frac{a+ce^{i\theta}}{a+c} + \frac{ac(1-\cos \theta)}{(a+c)^2} e^{i s_1(\theta)}\]
and then computing derivatives as follows:
\[
\begin{aligned}
x_1^\prime(\theta) + i y_1^\prime(\theta) &= e^{is_1(\theta)} \left(c^\prime(\theta) e^{-i s_1(\theta)} + r^\prime(\theta) + i r(\theta) s_1'(\theta)\right)\\
&= e^{is_1(\theta)} \left(\Re\left(c^\prime(\theta) e^{-is_1(\theta)} + r^\prime(\theta)\right) + i \Im \left(c^\prime(\theta) e^{-is_1(\theta)} + r^\prime(\theta)\right) + i r(\theta) s_1'(\theta) \right).
\end{aligned}
\]
Then, using \eqref{eqn:useful} and the fact that $r^\prime(\theta)$ is real, we have
\[
\begin{aligned}
x_1^\prime(\theta) + i y_1^\prime(\theta) &= i e^{is_1(\theta)} \left(\Im\left(\frac{c}{a + c} i e^{i \theta} e^{-is_1(\theta)} + r^\prime(\theta)\right) + r(\theta) s_1'(\theta) \right)\\
&=i e^{is_1(\theta)}\left(\frac{c}{a+c}\cos(\theta - s_1(\theta) ) + \frac{ac}{(a + c)^2}(1 - \cos(\theta))s_1'(\theta) \right),
 \end{aligned}
 \]
which allows us to conclude that
\begin{eqnarray}
 \label{eqn:x_derivative} x_1^\prime(\theta) &=& -\sin \left( s_1(\theta) \right) \left(\frac{c}{a+c} \cos\left(\theta - s_1(\theta) \right) + \frac{ac}{(a+c)^2} (1 - \cos\theta)s_1'(\theta) \right); \\
 \label{eqn:y_derivative} y_1^\prime(\theta) &=& \cos \left( s_1(\theta) \right) \left(\frac{c}{a+c} \cos\left (\theta - s_1(\theta) \right) + \frac{ac}{(a+c)^2} (1 - \cos\theta)s_1'(\theta)\right).
 \end{eqnarray}
To conclude  \eqref{derivative_theta} for $E_1$, one just needs to show that 
\begin{equation} \label{eqn:deriv_zero} \frac{c}{a+c} \cos\left (\theta - s_1(\theta) \right) + \frac{ac}{(a+c)^2} (1 - \cos\theta)s_1'(\theta) \ne 0,\end{equation}
for  $\theta \ne 0.$ This is almost immediate. First observe that since $\left| \frac{a}{a+c}\right| <1$, for $\theta \in [0, 2 \pi)$,
\[ -\frac{\pi}{2} < \text{arcsin}\left( \frac{a}{a+c} \sin \theta \right) < \frac{\pi}{2},\]
and so $ \theta - s_1(\theta) \in (- \frac{\pi}{2}, \frac{\pi}{2})$. This implies $\cos( \theta - s_1(\theta)) >0$. Moreover, one can compute
\[ s_1'(\theta) = 1 - \frac{a}{a+c} \frac{ \cos \theta}{ \sqrt{ 1 - \frac{a^2}{(a+c)^2} \sin^2 \theta}} \] 
and observe that $s_1'$ is continuous and $s'_1(\frac{\pi}{2}) = 1 >0$. One can show that $s_1'(\theta)=0$ leads to the contradiction $ 1 = \frac{a}{a+c}$. 
Thus, $s_1'>0$ as well and we can conclude that \eqref{eqn:deriv_zero} is strictly positive. This implies $E_1$ satisfies  \eqref{derivative_theta} and thus,
is an envelope for the family $\{ \mathcal{C}_{\theta} \}_{\theta \in [0, 2\pi)}$. 

Finally, a word about $\theta=0$. Because the circle $\mathcal{C}_{0}$ is the single point $(1,0)$, it does not make sense to say a curve is tangent to
$\mathcal{C}_{0}$. However, the formulas $(x_j(\theta), y_j(\theta))$ for each $E_j$ extend to continuously differentiable functions on intervals containing zero in their interior.
In particular,  we can certainly extend $E_1$ and $E_2$ to $\theta=0$ by specifying $E_j(0)= 1$ for $j=1,2$.

\subsubsection{Location of $E_1, E_2$} Let us briefly consider the relationship between the curves $E_1, E_2$ 
and intersections of the circles $\{\mathcal{C}_{\theta}\}_{\theta \in [0,2\pi)}$. We use this relationship to show that with the
exception of the point $(1,0)$, the curve $E_1$ lies completely outside of $\overline{\mathcal{D}}$ and the curve $E_2$ lies completely in the interior of $\mathcal{D}.$

Fix $\theta \ne0$. Then for $h$ with $|h|$ sufficiently small, the  circles $\mathcal{C}_{\theta}$ and $\mathcal{C}_{\theta +h}$ intersect in two points. To verify this, observe that the disks $\mathcal{D}_{\theta}$ and $\mathcal{D}_{\theta +h}$ will overlap for $|h|$ sufficiently small. Moreover, the  circle formula \eqref{eqn:Ctheta} paired with the formulas for $c(\theta)$ and $r(\theta)$ can be used to show that no circle $\mathcal{C}_{\theta}$ is fully contained in a different circle $\mathcal{C}_{\tilde{\theta}}.$ Thus, there must be two intersection points; call them $p^1_{\theta, h}$ and $p^2_{\theta, h}$.

Basic geometry shows that the points $p^1_{\theta, h}$ and $p^2_{\theta, h}$ will be symmetric across the straight line connecting the centers $c(\theta)$ and $c(\theta +h)$. Since $r(\theta) \ne 0$, we can conclude that one point, say $p^1_{\theta, h}$, is in $\overline{\mathcal{D}}^c$ and the other point $p^2_{\theta, h}$ is in $\mathcal{D}.$ Now write the intersection points as 
\[ p_{\theta,h}^j = c(\theta) + r(\theta)e^{i {s^j_h}},\]
where $s^j_h$ is an angle depending on $j$ and $h$. Substituting this formula for  $p_{\theta,h}^j$ into the equation for $\mathcal{C}_{\theta+h}$ gives:
\[  \Big( c_1(\theta) + r(\theta)\cos( s^j_h)- c_1(\theta +h) \Big)^2 +  \Big( c_2(\theta) + r(\theta)\sin( s^j_h)- c_2(\theta+h) \Big)^2 - r(\theta+h)^2= 0, \]
and one can use trigonometric approximations to show that 
\[ \lim_{h \rightarrow 0 }  \sin(s_h^j-\theta) = -\frac{a}{a+c} \sin \theta.\]
This shows that the sets $\{ p_{\theta,h}^1 \}, \{ p_{\theta,h}^2 \}$ converge to points $p^1_{\theta}$ and $p^2_{\theta}$ on $\mathcal{C}_{\theta}$ satisfying \eqref{eqn:s}. This implies that the sets $\{ p^1_{\theta}, p^2_{\theta} \} $ and $\{ E_1(\theta), E_2(\theta)\}$ are equal.

Now we can examine the location of the curves $E_1$ and $E_2.$ First since $E_1(\theta)$ and $E_2(\theta)$ are limits of the $\{p^j_{\theta,h}\}$, they are symmetric points across $\mathcal{C}$. 
Thus, if either of $E_1(\theta)$, $E_2(\theta)$ is on $\mathcal{C}$, we must have $E_1(\theta)=E_2(\theta).$ However, using \eqref{eqn:sj}, one can show that $E_1(\theta)=E_2(\theta)$ only at $\theta=0$. Thus, $E_1$ and $E_2$ only touch $\mathcal{C}$ at $\theta=0.$

Then by the properties of $p^1_{\theta}$ and $p^2_{\theta}$, except at $\theta=0$, one of the curves $E_1, E_2$ is always in  $\overline{\mathcal{D}}^c$ and one is always in $\mathcal{D}.$ By checking at $\theta =\pi$, we can conclude 
\[ 
\begin{aligned}
E_1(\theta) &= p^1_{\theta} \  \in  \overline{\mathcal{D}}^c \ \  \text{ for } 0 < \theta < 2 \pi, \quad E_1(0) = (1,0); \\
E_2(\theta) &= p^2_{\theta} \  \in  {\mathcal{D}} \ \  \text{ for } 0 < \theta < 2 \pi, \quad E_2(0) = (1,0). 
\end{aligned}
\]

\subsubsection{The Boundary of $\Omega$.} Recall that $\mathcal{B}$ denotes the boundary of $\Omega = \mathcal{D} \cup \bigcup_{\theta \in [0, 2\pi)} \mathcal{D}_{\theta}$. We will show that $\mathcal{B} = E_1$. Our initial goal is to show $\mathcal{B} \subseteq E_1$. First, it is easy to conclude that $\mathcal{B} \subseteq \cup_{\theta \in [0, 2\pi)} \mathcal{C}_{\theta}.$ To see this, note that $\mathcal{B}$ is in the boundary of $ \bigcup_{\theta \in [0, 2\pi)} \mathcal{D}_{\theta}$. Then if $\{ c(\theta_n) + \lambda_n r(\theta_n )e^{i s_n}\}$ with $0\le \lambda_n \le 1$ is a sequence converging to a point on $\mathcal{B}$, one can use convergent subsequences of the $\{\theta_n\}$, $\{s_n\}$, and $\{\lambda_n\}$ to conclude that it must converge to a point on some $\mathcal{C}_{\theta}$. 

Since $\Omega$ is in the closure of the numerical range of a contraction, we also know $E_1(0) = (1,0)=\mathcal{C}_0 \in \mathcal{B}.$  Now, we determine the points that the $\mathcal{C}_{\theta}$ with $\theta \ne 0$ can contribute to $\mathcal{B}.$ Fix $\theta \ne 0$. Set $\mathcal{B}_{\theta} = \mathcal{B} \cap \mathcal{C}_{\theta}.$ Further, define
\[ \Omega^{\theta}_N :=  \mathcal{D}_{\theta} \cup \mathcal{D} \cup \left( \bigcup_{\ell=0}^{N-1} \mathcal{D}_{ \frac{2 \pi \ell}{N}} \right).\]
 Let $\mathcal{B}_N$ denote the boundary of $\Omega^{\theta}_N$; then $\mathcal{B}_{N}$ is composed of arcs of circles from the boundaries of the disks comprising $\Omega^{\theta}_N.$ Let $\mathcal{B}_N^{\theta}$ be the contribution of $\mathcal{C}_{\theta}$ to $\mathcal{B}_N$. Since $\Omega$ is open, we know that 
 \[\mathcal{B}^{\theta}_N := \mathcal{B}_{N} \cap \mathcal{C}_{\theta} =  \mathcal{C}_{\theta}  \cap \left(\Omega_N^{\theta} \right)^c.\]
 One can use the definition of boundary and the density of the roots of unity in $\mathbb{T}$ to show 
  \[ \mathcal{B}_{\theta} = \lim_{N \rightarrow \infty} \mathcal{B}^{\theta}_N.\]
Fix $N$ and assume  $\mathcal{B}^{\theta}_N \ne \emptyset.$
By earlier discussions, for $N$ sufficiently large (i.e. the difference between the angles sufficiently small), $\mathcal{C}_{\theta}$ will have one intersection point in $\overline{\mathcal{D}}^c$, call it $p_{\psi}$, with each close $\mathcal{C}_{\psi}$ bounding a disk from $\Omega_N^{\theta}$. Then a whole segment of $\mathcal{C} _{\theta}$ between $p_{\psi}$ and the point on $\mathcal{C}_{\theta} \cap \mathcal{C}$ closest to $c(\psi)$ will be contained in $\mathcal{D}_{\psi}$. This implies that $\mathcal{B}_N^{\theta}$  must be an arc on $\mathcal{C}_{\theta}$ whose endpoints are intersection points of $\mathcal{C}_{\theta}$ and two nearby circles $\mathcal{C}_{\psi_1}$ and $\mathcal{C}_{\psi_2}$.

By earlier remarks about intersection points, as $N \rightarrow \infty$, the intersection points  in $\overline{\mathcal{D}}^c$ between $\mathcal{C}_{\theta}$ and the closest $\mathcal{C}_{\psi}$'s will approach $E_1(\theta).$
Thus we can conclude that either $\mathcal{B}_{\theta} = \emptyset$ or $\mathcal{B}_{\theta} = E_1(\theta).$
This proves the claim that $\mathcal{B}  \subseteq E_1.$ 

To show $\mathcal{B}= E_1$, proceed by contradiction and assume there is some $E_1(\theta)=(x_1(\theta), y_1(\theta)) \not \in \mathcal{B}$. Without loss of generality, assume $0< \theta < \pi$. Earlier arguments showed that $s_1'$ is always positive, so $s_1$ is strictly increasing. Thus, $s_1(\theta) \in (s_1(0), s_1(\pi)) =(0,\pi)$. This implies $\sin(s_1(\theta)) >0$ and by  \eqref{eqn:x_derivative}, $x_1$ is strictly decreasing on $[0, \pi]$. Moreover, on  $(0, \pi)$, we have $y_1>0$ and on $(\pi, 2\pi)$, we have $y_1<0$.  Thus, there is no point on $E_1$ with $x$-coordinate $x_1(\theta)$ and $y$-coordinate strictly larger than $y_1(\theta).$

To obtain the contradiction, define $\alpha = \sup\{ \epsilon : (x_1(\theta), y_1(\theta) + \epsilon) \in \Omega\}$. Since $\Omega$ is bounded, such an $\alpha$ exists and since $E_1(\theta) \not \in \mathcal{B}$, we know $\alpha > 0$. But, then $(x_1(\theta), y_1(\theta) +\alpha) \in \mathcal{B}$ and since $\mathcal{B}\subseteq E_1$, we must have $(x_1(\theta), y_1(\theta) +\alpha) \in E_1$. But, this contradicts our previous statement about $E_1$.
Then it follows that $\mathcal{B} = E_1.$

\subsubsection{The Proof of Theorem \ref{thm:boundary}}
Let $\widehat{\Omega}$ be the closed convex hull of $\Omega$. By previous facts, this implies $\widehat{\Omega} =$Clos$(\mathcal{W}( S^1_{\Theta}))$.
We will show that $E_1$ is the boundary of $\widehat{\Omega}$ and hence, of Clos($\mathcal{W}( S^1_{\Theta}))$ and $\mathcal{W}( S^1_{\Theta})$.

First we show $E_1$ is the boundary of some convex set. To show this, we use the Parallel Tangents condition, which says that a curve $C$ is the boundary of a convex set if and only if there are no three points on $C$ such that the tangents at these points are parallel. Observe that the tangents of $E_1$ are given by $( x_1'(\theta), y_1'(\theta))$ for $\theta \in [0, 2\pi).$ By way of contradiction, assume there are three points whose tangents are parallel, say at $\theta_1, \theta_2, \theta_3 \in [0, 2\pi).$ This implies that  
\begin{equation} \label{eqn:3tangents} \frac{ y_1'(\theta_1)}{x_1'(\theta_1)} =  \frac{ y_1'(\theta_2)}{x_1'(\theta_2)} =\frac{ y_1'(\theta_3)}{x_1'(\theta_3)} .\end{equation}
By \eqref{eqn:x_derivative} and \eqref{eqn:y_derivative}, we know $\frac{ y_1'(\theta)}{x_1'(\theta)} = -\cot(s_1(\theta))$ for $\theta \in [0, 2\pi)$.  Then, since $s_1$ is a one-to-one function mapping $[0, 2\pi)$ onto  $[0, 2\pi)$, Equation \eqref{eqn:3tangents} says that there are three distinct angles $\psi_1, \psi_2, \psi_3 \in [0, 2\pi)$ satisfying
\[ \cot(\psi_1) = \cot(\psi_2) = \cot(\psi_3),\]
which contradicts properties of cotangent. Thus, $E_1$ is the boundary of a convex set $S$. 

As $E_1$ is a bounded closed curve and $S$ is convex, its closure $\overline{S}$ must be the closed convex hull of $E_1$. Similarly, as $\Omega$ is composed of circular disks including $\mathcal{D}$, one can show that $\Omega$ is contained in the closed convex hull of $E_1$. But, then $\Omega \subseteq \overline{S} \subseteq \widehat{\Omega}$, which implies that $\widehat{\Omega} = \overline{S}$. Thus, $E_1$ is the boundary of $\widehat{\Omega}$ and hence, the boundary of Clos$(\mathcal{W}(S^1_{\Theta}))$ and $\mathcal{W}(S_{\Theta}^1)$. \\

Finally, we remark that the boundary of the numerical range is not, in general, the set of extreme points that one obtains from the circles. Here, by an extreme point, we mean the point on $\mathcal{C}_{\theta}$ furthest away from the center of $\mathcal{C}.$  In Figure~\ref{fig:Numerical_range_1_2} for $a=2$ and $c=1$, we present some of the circles $\{\mathcal{C}_{\theta}\}$, the curve consisting of the extreme points of the $\mathcal{C}_{\theta}$, and the boundary of the numerical range of $S_{\Theta}^1.$

\begin{figure}[ht]
\includegraphics[height=6cm]{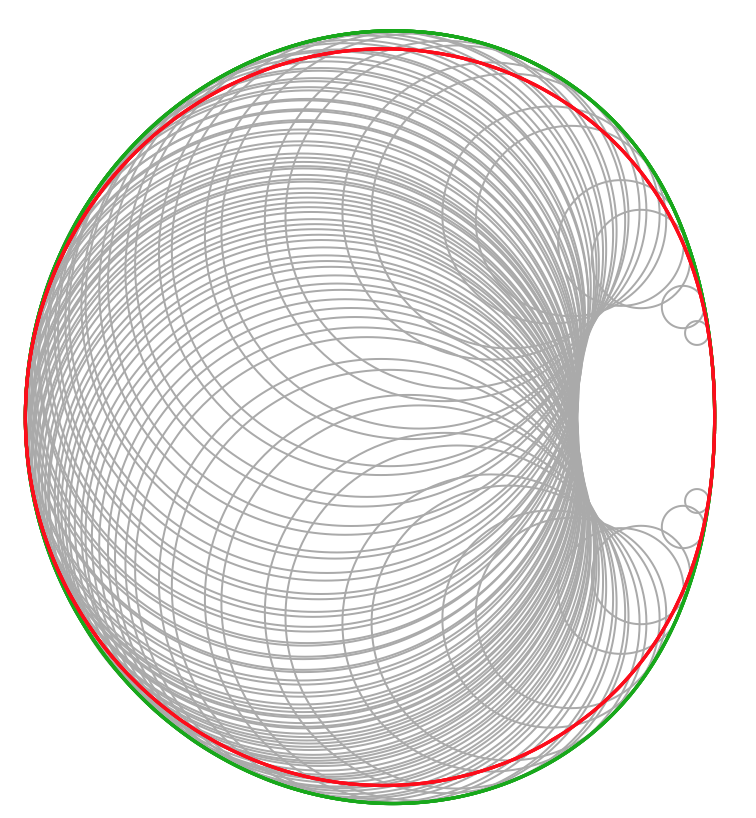}
\caption{The numerical range of $S_{\Theta}^1$ with $a=2$ and $c=1$, the curve of extreme points in red, and the outer envelope of the family of circles in green.}
\label{fig:Numerical_range_1_2}
\end{figure}

\newpage

\end{document}